\newtheorem{thm}{Theorem}[section]
\newtheorem*{thm*}{Theorem}
\newtheorem{lem}[thm]{Lemma}
\newtheorem*{lem*}{Lemma}
\newtheorem{claim}[thm]{Claim}
\newtheorem{prop}[thm]{Proposition}
\theoremstyle{definition}
\newtheorem*{case*}{Case}
\newtheorem{defn}[thm]{Definition}
\newtheorem*{defn*}{Definition}
\newtheorem{exmp}[thm]{Example}
\newtheorem*{exmp*}{Example}
\newtheorem{step}{Step}\renewcommand{\thestep}{}
\theoremstyle{remark}
\renewcommand{\thecase}{}
\newtheorem{rmk}[thm]{Remark}
\newtheorem*{rmk*}{Remark}
\def\alphenumi{
  \def\theenumi{\alph{enumi}}
  \def\p@enumi{\theenumi}
  \def\labelenumi{(\@alph\c@enumi)}}
\def\thecase{\@arabic\c@case}
\def\thestep{\@arabic\c@step}
\def\hhmm{\number\hh:\ifnum\mm<10{}0\fi\number\mm}
\let\oldmarginpar\marginpar
\renewcommand\marginpar[1]{\-\oldmarginpar[\raggedleft\footnotesize #1]%
{\raggedright\footnotesize #1}}
\renewcommand\emptyset{\varnothing}
\newcommand\HH{\mathbb{H}}
\newcommand\NN{\mathbb{N}}
\newcommand\RR{\mathbb{R}}
\newcommand\fb{{\mathfrak{b}}}
\newcommand\fK{{\mathfrak{K}}}
\newcommand\sC{{\mathscr{C}}}
\newcommand\sO{{\mathscr{O}}}
\newcommand\sQ{{\mathscr{Q}}}
\newcommand\sS{{\mathscr{S}}}
\newcommand\sU{{\mathscr{U}}}
\newcommand\eps{\varepsilon}
\newcommand\less{\setminus}
\DeclareMathOperator{\mydirac}{\slashed{\partial}}
\newcommand{\essinf}{\operatornamewithlimits{ess\ inf}}
\newcommand{\esssup}{\operatornamewithlimits{ess\ sup}}
\DeclareMathOperator{\Int}{int}
\newcommand\loc{\operatorname{loc}}
\newcommand\supp{\operatorname{supp}}
\newcommand\tr{\operatorname{tr}}
\numberwithin{equation}{section}
\begin{document}

\title[Maximum principles for boundary-degenerate parabolic operators]{Maximum principles for boundary-degenerate linear parabolic differential operators}
\author[Paul M. N. Feehan]{Paul M. N. Feehan}
\address{Department of Mathematics, Rutgers, The State University of New Jersey, 110 Frelinghuysen Road, Piscataway, NJ 08854-8019}
\email{feehan@math.rutgers.edu}

\date{July 19, 2013}

\begin{abstract}
We develop weak and strong maximum principles for boundary-degenerate, linear, parabolic, second-order partial differential operators, $Lu := -u_t-\tr(aD^2u)-\langle b, Du\rangle + cu$, with \emph{partial} Dirichlet boundary conditions. The coefficient, $a(t,x)$, is assumed to vanish along a non-empty open subset, $\mydirac_0\!\sQ$, called the \emph{degenerate boundary portion}, of the parabolic boundary, $\mydirac\!\sQ$, of the domain $\sQ\subset\RR^{d+1}$, while $a(t,x)$ may be non-zero at points in the \emph{non-degenerate boundary portion}, $\mydirac_1\!\sQ := \mydirac\!\sQ\less\overline{\mydirac_0\!\sQ}$. Points in $\mydirac_0\!\sQ$ play the same role as those in the interior of the domain, $\sQ$, and only the non-degenerate boundary portion, $\mydirac_1\!\sQ$, is required for boundary comparisons. We also develop comparison principles and a priori maximum principle estimates for solutions to boundary value and obstacle problems defined by boundary-degenerate parabolic operators, again where only the non-degenerate boundary portion, $\mydirac_1\!\sQ$, is required for boundary comparisons. Our results complement those in our previous articles \cite{Feehan_maximumprinciple_v1, Feehan_perturbationlocalmaxima}.
\end{abstract}

%
%
%
%

\subjclass[2010]{Primary 35B50, 35B51, 35K65; secondary 35D40, 35K85, 60J60}

\keywords{Comparison principles, boundary-degenerate parabolic differential operators, degenerate diffusion processes, maximum principles, non-negative characteristic form, stochastic volatility processes, mathematical finance, obstacle problems, viscosity solutions}

\thanks{The author was partially supported by NSF grant DMS-1237722, a visiting faculty appointment in the Department of Mathematics at Columbia University, and the Max Planck Institut f\"ur Mathematik, Bonn.}

\maketitle
\tableofcontents
\listoffigures

\section{Introduction}
\label{sec:Introduction}
The weak maximum principle for a parabolic, possibly degenerate, linear, second-order partial differential operator in non-divergence form, $Lu=-u_t-\tr(aD^2u)-\langle b, Du\rangle + cu$, provides uniqueness of solutions, $u$, to boundary value problems on an open subset $\sQ\subset\RR^{d+1}$ with Dirichlet condition prescribed on the \emph{full} parabolic boundary, $\mydirac\!\sQ$, when the solutions belong to $C^2(\sQ)$ or $W^{2,d+1}_{\loc}(\sQ)$ \cite{Krylov_LecturesHolder, Lieberman}, or $C(\sQ)$ if interpreted in the viscosity sense \cite{Crandall_Ishii_Lions_1992}. As noted by G. Fichera \cite{Fichera_1956, Fichera_1960}  (see also the expositions due to O. A. Ole{\u\i}nik and E. V. Radkevi{\v{c}} \cite{Oleinik_Radkevic, Radkevich_2009a, Radkevich_2009b}), one can obtain uniqueness of solutions to boundary value problems with Dirichlet condition prescribed only along a \emph{part} of the parabolic boundary, $\mydirac_1\!\sQ:=\mydirac\!\sQ\less\overline{\mydirac_0\!\sQ}$, for a non-empty, open subset $\mydirac_0\!\sQ\subseteqq\mydirac\!\sQ$, when the coefficient $a(t,x)$ vanishes along $\mydirac_0\!\sQ$ (we call such an operator, $L$, \emph{boundary-degenerate}) and the \emph{Fichera function}\footnote{Namely, $\fb := (b^k-a^{kj}_{x_j})n_k$, where $(n_0,n_1,\ldots,n_d)$ is the inward-pointing unit normal vector field along $\mydirac_0\!\sQ$ \cite[Equation (1.1.3)]{Radkevich_2009a}.},
$\fb$, defined by $L$ and $\mydirac_0\!\sQ$
obeys the Fichera sign condition \cite[p. 308]{Radkevich_2009a} along $\mydirac_0\!\sQ$.

When the operator, $L$, is given in divergence form, so one can define a weak solution, $u \in W^{1,2}(\sQ)$, to a boundary value problem, one can also obtain uniqueness of solutions with partial Dirichlet data when the Fichera sign condition holds along $\mydirac_0\!\sQ$ \cite{Fichera_1956, Fichera_1960, Oleinik_Radkevic, Radkevich_2009a, Radkevich_2009b}.

\begin{figure}
 \centering
 \begin{picture}(210,210)(0,0)
 \put(0,0){\includegraphics[scale=0.6]{./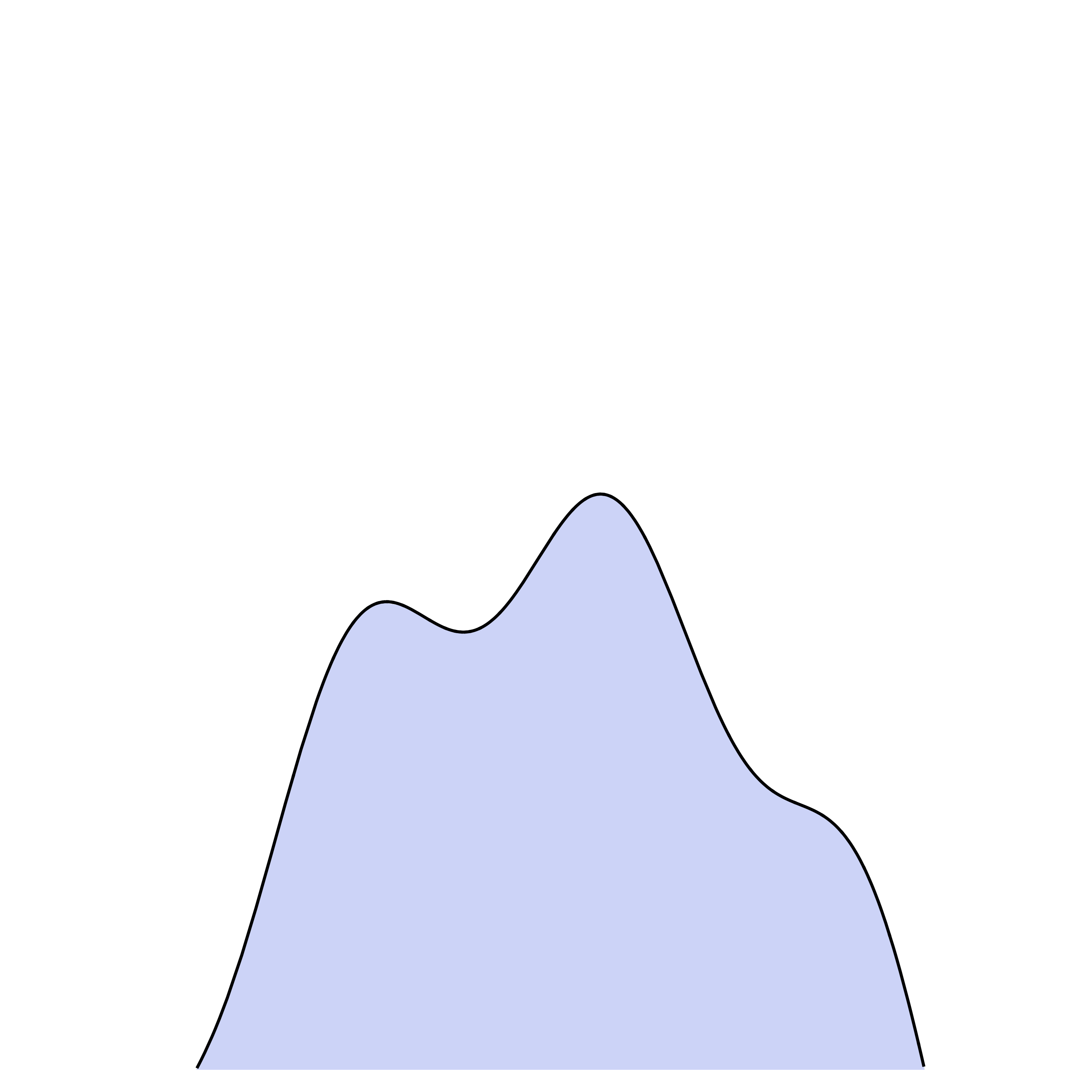}}
 \put(90,10){$\mydirac_0\!\sQ$}
 \put(105,50){$\sQ$}
 \put(110,95){$\mydirac_1\!\sQ$}
 \put(150,80){$\RR^{d+1}$}
 \end{picture}
 \caption[A domain and its `degenerate' and `non-degenerate' boundaries.]{A subdomain, $\sQ\subset\RR^{d+1}$, and its `degenerate' and `non-degenerate' boundaries, $\mydirac_0\!\sQ$ and $\mydirac_1\!\sQ$. In maximum principles, the degenerate boundary portion, $\mydirac_0\!\sQ$, plays the same role as the interior of the domain, $\sQ$.}
 \label{fig:domain}
\end{figure}

However, the Fichera weak maximum principle does not take into account a more modern view of the appropriate function spaces in which uniqueness is sought, such as those used by P. Daskalopoulos and the author \cite{Daskalopoulos_Feehan_statvarineqheston}, Daskalopoulos, R. Hamilton, and E. Rhee \cite{DaskalHamilton1998, Daskalopoulos_Rhee_2003}, E. Ekstr\"om and J. Tysk \cite{Ekstrom_Tysk_bcsftse}, C. L. Epstein and R. Mazzeo \cite{Epstein_Mazzeo_annmathstudies}, C. A. Pop and the author \cite{Feehan_Pop_mimickingdegen_pde}, and H. Koch \cite{Koch}. Indeed, the Fichera weak maximum principles lead to the imposition of additional Dirichlet boundary conditions which are not necessarily motivated by the underlying application, whether in biology, finance, or physics. These additional Dirichlet boundary conditions, usually for certain ranges of parameters defining the operator, $L$, are often less natural than the physically-motivated regularity properties suggested by choices of appropriate weighted H\"older spaces \cite{DaskalHamilton1998, Daskalopoulos_Rhee_2003, Epstein_Mazzeo_annmathstudies, Feehan_Pop_mimickingdegen_pde} or Sobolev spaces \cite{Daskalopoulos_Feehan_statvarineqheston, Feehan_Pop_regularityweaksoln, Koch}, which automatically encode enough regularity up to the portion, $\mydirac_0\!\sQ$, of the parabolic boundary where the operator, $L$, becomes degenerate.

However, the question of exactly how regular the solution should be near $\mydirac_0\!\sQ$ is delicate. If we ask for too much regularity, such as $C^2$ up to $\mydirac_0\!\sQ$, we may obtain uniqueness but have no existence theory. Indeed, denoting $\underline\sQ := \sQ\cup \mydirac_0\!\sQ$, this was the motivation for the introduction by Daskalopoulos and Hamilton of their weighted H\"older space, $C^{2+\alpha}_s(\underline\sQ)$, in \cite[pp. 901--902]{DaskalHamilton1998} for the purpose of solving the Cauchy problem for a boundary-degenerate, linear, second-order, parabolic operator, $L$, arising in the study of the porous medium equation. The weighted H\"older space, $C^{2+\alpha}_s(\underline\HH_T)$, plays a key role in the proofs due to Daskalopoulos, Hamilton, and Rhee of both existence and uniqueness of solutions to the Cauchy problem on $\HH_T$, where $\HH := \RR^{d-1}\times\RR_+$. On the other hand, if we ask for too little regularity, such as $C^0$ up to $\mydirac_0\!\sQ$, the examples of the Heston stochastic volatility model \cite{Daskalopoulos_Feehan_statvarineqheston, Daskalopoulos_Feehan_optimalregstatheston} in mathematical finance, the porous medium equation \cite{DaskalHamilton1998}, Wright-Fisher diffusion model in mathematical biology \cite{Epstein_Mazzeo_annmathstudies}, and interest rate models in mathematical finance \cite{Ekstrom_Tysk_bcsftse} indicate that this usually leads to the imposition of an unphysical Dirichlet boundary condition. In particular, this has the unintended consequence that the unique solutions selected by the Fichera weak maximum principle can be no more than continuous up to the boundary; a detailed example illustrating this point with the aid of the Kummer equation is provided by the author in \cite[\S 1.1]{Feehan_perturbationlocalmaxima}.

Let $\sS(d)\subset\RR^{d\times d}$ denote the subset of symmetric matrices and $\sS^+(d)\subset\sS(d)$ denote the subset of non-negative definite, symmetric matrices. In the context of maximum principles for boundary-degenerate parabolic operators, a useful concept of boundary regularity is given by the

\begin{defn}[Second-order boundary condition and boundary regularity]
\label{defn:Second-order_boundary_regularity}
Let $\sQ\subset\RR^{d+1}$ be an open subset and $a:\sQ\to \sS^+(d)$ be a function. We say that $u\in C^2(\sQ)\cap C^1(\underline \sQ)$ obeys a \emph{second-order boundary condition} along $\mydirac_0\!\sQ$ if
\begin{equation}
\label{eq:Ventcel}
\tr(a D^2u) \in C(\underline \sQ) \quad\hbox{and}\quad \tr(a D^2u) = 0 \quad\hbox{on }\mydirac_0\!\sQ,
\end{equation}
and write $u \in C^2_s(\underline \sQ)$ if $u\in C^2(\sQ)\cap C^1(\underline \sQ)$ obeys \eqref{eq:Ventcel}.
\end{defn}

Given an open subset $\sQ\subset\RR^{d+1}$, we shall say that a function $u \in C^2(\sQ)$ (respectively, $W^{2,d+1}_{\loc}(\sQ)$) is \emph{(strictly) $L$-subharmonic} if $Lu\leq 0$ (respectively, $Lu < 0$) (a.e.) on $\sQ$. (The notation will be explained below.)

The purpose of this article is to develop weak and strong maximum principles for $L$-subharmonic functions in $C^2_s(\underline\sQ)$, when $L$ is a boundary-degenerate parabolic operator in non-divergence form, and Dirichlet boundary conditions are prescribed only along $\mydirac_1\!\sQ$. Our results complement those in \cite{Feehan_maximumprinciple_v1} for the case of boundary-degenerate elliptic operators, $A$, and $A$-subharmonic functions in $C^2_s(\underline\sO)$. We develop a priori maximum principle estimates for solutions, subsolutions, and supersolutions in $C^2(\sQ)$ or $W^{2,d+1}_{\loc}(\sQ)$ to boundary value problems for $L$, with Dirichlet boundary conditions prescribed only along $\mydirac_1\!\sQ$. We also develop comparison principles and a priori maximum principle estimates for solutions and supersolutions in $W^{2,d+1}_{\loc}(\sQ)$ to unilateral obstacle problems for $L$, again with Dirichlet boundary conditions prescribed only along $\mydirac_1\!\sQ$.

While the focus of this article is on the development of weak and strong maximum principles for subsolutions in $C^2(\sQ)$ or $W^{2,d+1}_{\loc}(\sQ)$ to \emph{linear} boundary-degenerate parabolic equations in non-divergence form, it appears likely that our approach can be extended to give comparison principles for viscosity subsolutions and supersolutions to \emph{fully nonlinear} equations on $\sQ$ with fully nonlinear boundary conditions imposed only on $\mydirac_1\!\sQ$, provided the concept of viscosity solution \cite{Crandall_Ishii_Lions_1992} is appropriately modified. Similarly, one can expect analogues of the classical Aleksandrov-Bakelman-Pucci estimates (compare \cite[Theorem 7.1]{Lieberman} for functions in $W^{2,d+1}_{\loc}(\sQ)$), where the role of the full parabolic boundary, $\mydirac\!\sQ$, would be replaced by the non-degenerate boundary portion, $\mydirac_1\!\sQ$. These ideas will be developed in a separate article.

Our companion article \cite{Feehan_perturbationlocalmaxima} develops weak and strong maximum principles for $L$-subharmonic functions in both $C^2(\sQ)\cap C^1(\underline\sQ)$ and $W^{2,d+1}_{\loc}(\sQ)\cap C^1(\underline\sQ)$ for a boundary-degenerate parabolic operator, $L$, in non-divergence form, given slightly stronger boundary-regularity conditions on the coefficients $a$ and $b$. However, the methods in \cite{Feehan_perturbationlocalmaxima} are quite different to those used in the present article.

In \cite{Feehan_classical_perron_parabolic}, we apply the main results of this article in our proof of existence of solutions to the parabolic equation \eqref{eq:Parabolic_equation} and obstacle problem \eqref{eq:Parabolic_obstacle_problem} with Dirichlet boundary condition \eqref{eq:Parabolic_boundary_condition} using a version of the classical Perron method \cite[\S 3.4]{Lieberman}.

A weak maximum principle for the parabolic (model) Kimura diffusion operator is given by Epstein and Mazzeo in \cite[Proposition 4.1.1]{Epstein_Mazzeo_annmathstudies}, who also employ a form of second-order boundary condition, together with a Hopf lemma and a strong maximum principle in \cite[Lemma 4.2.4 and 4.2.5]{Epstein_Mazzeo_annmathstudies}. Related uniqueness results and weak maximum principles for classical (sub-)solutions to second-order, linear, degenerate elliptic and parabolic operators are proved by M. A. Pozio, F. Punzo,
and A. Tesei in \cite{Pozio_Punzo_Tesei_2008, Punzo_Tesei_2009a, Punzo_Tesei_2009b}, but they do not make use of a second-order boundary regularity condition, such as $C^2_s(\underline\sQ)$, to obtain uniqueness results.

\subsection{Boundary value and obstacle problems for boundary-degenerate, linear, second-order, parabolic partial differential operators}
\label{subsec:Parabolic_operator_boundary_problem}
Consider a possibly non-cylindrical open subset $\sQ\subset\RR^{d+1}$ with topological boundary $\partial \sQ$, where $d\geq 1$. Given $P^0=(t^0,x^0)\in\RR^{d+1}$ and $R>0$, define
\begin{equation}
\label{eq:Parabolic_cylinder}
Q_R(P^0) := \left\{(t,x)\in \RR^{d+1}: \max\left\{|x-x^0|, \, |t-t^0|^{1/2}\right\} < R, \ t>t^0\right\},
\end{equation}
where our time convention is opposite to that of G. Lieberman \cite[p. 5]{Lieberman} since we consider terminal rather than initial boundary problems in this article. Following Lieberman \cite[p. 7]{Lieberman}, for possibly non-cylindrical open subsets, we make the

\begin{defn}[Parabolic boundary]
\label{defn:Parabolic_boundary}
For an open subset $\sQ\subset\RR^{d+1}$, we call
\begin{equation}
\label{eq:Parabolic boundary}
\mydirac\!\sQ := \{P^0=(t^0,x^0)\in\partial \sQ: Q_\eps(P^0)\cap \sQ \neq \emptyset, \ \forall\,\eps > 0\}
\end{equation}
the \emph{parabolic boundary} of $\sQ$.
\end{defn}

We identify the vector spaces $\RR$ and $\RR^d$ with the hyperplanes $\RR\times\{0\}$ and $\{0\}\times\RR^d\subset\RR^{d+1}$ of temporal and spatial vectors, respectively. When the boundary of $\sQ$ has a tangent plane at a point $P \in \partial \sQ$, we write the \emph{inward}-pointing  normal vector as $n_0(P)e_0 + \vec n(P)\in\RR^{d+1}$, where $\vec n(P) = \sum_{i=1}^dn^i(P)e_i$ and $e_0,e_1,\ldots,e_d$ is the standard basis of $\RR^{d+1}$. Given a map
\begin{equation}
\label{eq:a_nonnegative}
a:\sQ \to \sS^+(d),
\end{equation}
we call\footnote{The corresponding definition in the elliptic case, where the $n_0$ vanishing condition is omitted, is a slight generalization of that used by Fichera \cite{Fichera_1960}, Ole{\u\i}nik, and Radkevi{\v{c}} \cite{Oleinik_Radkevic}, \cite[p. 308]{Radkevich_2009a}.}
\begin{equation}
\label{eq:Degeneracy_locus_parabolic}
\mydirac_0\!\sQ := \Int\left\{P\in\partial \sQ: \lim_{\sO\ni P'\to P}a(P') = 0\right\}\cap \Int\{P\in\partial \sQ: n_0(P)=0\},
\end{equation}
the \emph{degenerate parabolic boundary} (again slightly abusing terminology) defined by $a:\sQ \to \sS^+(d)$, where $\Int S$ denotes the interior of a subset $S$ of a topological space. Throughout this article we shall allow $\mydirac_0\!\sQ$ to be non-empty and denote
\begin{equation}
\label{eq:Domain_plus_degenerate_boundary_parabolic}
\underline \sQ := \sQ\cup\mydirac_0\!\sQ.
\end{equation}
We also call
\begin{equation}
\label{eq:Parabolic_nondegeneracy_locus}
\mydirac_1\!\sQ := \Int\left\{P\in\partial \sQ: \lim_{\sO\ni P'\to P}a(P') \neq 0\right\}\cup \Int\{P\in\partial \sQ: n_0(P)\neq 0\}
\end{equation}
the \emph{non-degenerate parabolic boundary} defined by $a:\sQ \to \sS^+(d)$ and observe that
\begin{equation}
\label{eq:Degenerate_boundary_decomposition}
\mydirac\!\sQ = \left(\mydirac\!\sQ\cap\overline{\mydirac_0\!\sQ}\right)\cup\mydirac_1\!\sQ = \mydirac_0\!\sQ\cup\left(\mydirac\!\sQ\cap\overline{\mydirac_1\!\sQ}\right),
\end{equation}
where $\overline{\Sigma}$ indicates closure of a subset $\Sigma\subset\partial\sQ$ with respect to the topological boundary, $\partial\sQ$. The meaning of the different boundary portions is clarified in the following

\begin{exmp}[Boundaries for parabolic cylinders]
\label{exmp:Boundary_cylinder_parabolic}
Given a parabolic cylinder, $\sQ=(0,T)\times\sO=\sO_T$, for some $T>0$ and open subset $\sO\subset\RR^d$, then
\begin{align*}
\mydirac\!\sQ &= \left(\{T\}\times\partial\sO\right) \cup \left(\{T\}\times\sO\right) \cup \left((0,T)\times\partial\sO\right)
\\
&= \left(\{T\}\times\bar\sO\right) \cup \left((0,T)\times\partial\sO\right)
\\
&= \left(\{T\}\times\sO\right) \cup \left((0,T]\times\partial\sO\right),
\end{align*}
where (in the terminology of \cite[p. 7]{Lieberman}), the subset $\{T\}\times\sO$ is the \emph{top} of $\sQ$, and $(0,T)\times\partial\sO$ is the \emph{side} of $\sQ$, and $\{T\}\times\partial\sO$ is the \emph{corner} of $\sQ$.

Unlike its elliptic counterpart, we note that the non-degenerate boundary portion,
\begin{equation}
\label{eq:Nondegenerate_boundaryportion_nonempty_parabolic}
\mydirac_1\!\sQ \hbox{ is always non-empty}.
\end{equation}
For example, when $\sQ=(0,T)\times\sO$, then
$$
\mydirac_1\!\sQ \supset \{T\}\times\sO,
$$
since $n_0(P)=-1$ when $P\in \{T\}\times\sO$, and again keeping in mind our convention of considering terminal, rather than initial boundary problems, in this article because of their association with optimal stopping problems in probability theory.

Let us now suppose that $a(t,x)$ is independent of $t\in\RR$ and write $a(t,x)=a(x)$, for all $(t,x)\in\sQ$. We recall from \cite{Feehan_maximumprinciple_v1} that
$$
\partial_0\sO := \Int\left\{x\in\partial \sO: \lim_{\sO\ni x'\to x}a(x') = 0\right\}
\quad\hbox{and}\quad
\partial_1\sO := \Int\left\{x\in\partial \sO: \lim_{\sO\ni x'\to x}a(x') \neq 0\right\},
$$
and thus
$$
\partial\sO = \partial_0\sO\cup\overline{\partial_1\sO} = \overline{\partial_0\sO}\cup \partial_1\sO.
$$
Furthermore,
$$
\mydirac_0\!\sQ = (0,T)\times\partial_0\sO
\quad\hbox{and}\quad
\underline \sQ = (0,T)\times (\sO\cup \partial_0\sO) = (0,T)\times\underline\sO = \underline\sO_T,
$$
noting that $n_0(P)=0$ for all $P\in \mydirac_0\!\sQ$, while
\begin{align*}
\mydirac_1\!\sQ &= \left(\{T\}\times(\sO\cup\partial_1\sO)\right) \cup \left((0,T)\times\partial_1\sO\right)
\\
&= \left(\{T\}\times\sO\right) \cup \left((0,T]\times\partial_1\sO\right).
\end{align*}
Clearly,
\begin{align*}
\mydirac_0\!\sQ\cup\left(\mydirac\!\sQ\cap\overline{\mydirac_1\!\sQ}\right)
&= \left((0,T)\times\partial_0\sO\right)\cup \left(\{T\}\times\bar\sO\right) \cup \left((0,T]\times\overline{\partial_1\sO}\right)
\\
&= \left(\{T\}\times\sO\right) \cup \left((0,T]\times\partial\sO\right) = \mydirac\!\sQ
\\
&= \left((0,T]\times\overline{\partial_0\sO}\right)\cup \left(\{T\}\times\sO\right) \cup \left((0,T]\times\partial_1\sO\right)
\\
&= \left(\mydirac\!\sQ\cap\overline{\mydirac_0\!\sQ}\right)\cup\mydirac_1\!\sQ.
\end{align*}
This concludes our example.
\end{exmp}

In the sequel, we shall allow $\sQ\subset\RR^{d+1}$ to be an arbitrary open subset. Given a vector field
$b:\sQ\to\RR^{d+1}$, and a function $c:\sQ\to \RR$, we shall derive maximum principles for the operator,
\begin{equation}
\label{eq:Generator_parabolic}
Lu := -u_t - \tr(aD^2u) - \langle b,Du\rangle + cu,
\end{equation}
where $D^2u$ and $Du$ denote the Hessian matrix and gradient of a suitably regular function $u$ on $\sQ$ with respect to the spatial coordinates, respectively. We suppose that the coefficients, $a,b,c$, are defined on $\sQ$ in the case of maximum principles for $L$-subharmonic functions in $C^2(\sQ)$ and are measurable and defined a.e. on $\sQ$ in the case of maximum principles for those in $W^{2,d+1}_{\loc}(\sQ)$.

In older literature, $L$ in \eqref{eq:Generator_parabolic} is called a parabolic linear second-order partial differential operator with non-negative characteristic form\footnote{We refer to the definition of Ole{\u\i}nik and Radkevi{\v{c}} \cite[p. 308]{Radkevich_2009a} rather than Tricomi \cite[p. 298]{Radkevich_2009a}, which requires in addition that $a>0$ on $\sQ$, that is, $L$ is locally strictly parabolic in the interior of $\sQ$; however, in the applications we have in mind, the latter condition is often satisfied and the degeneracy is confined to a subset of $\mydirac\!\sQ$.} \cite{Oleinik_Radkevic}. We shall call $L$ \emph{boundary degenerate} when $\mydirac_0\!\sQ$ is non-empty, noting the distinction between the way we use the term `degenerate' here and the sense in which this term is used in \cite{Crandall_Ishii_Lions_1992}, where an operator which strictly parabolic is merely a particular type of degenerate parabolic operator.

We shall consider the question of uniqueness of solutions to the parabolic equation,
\begin{equation}
\label{eq:Parabolic_equation}
Lu = f \quad \hbox{(a.e.) on }\sQ,
\end{equation}
and the obstacle problem,
\begin{equation}
\label{eq:Parabolic_obstacle_problem}
\min\{Lu-f, \ u-\psi\} = 0 \quad \hbox{a.e. on }\sQ,
\end{equation}
with \emph{partial} Dirichlet boundary (and terminal) condition,
\begin{equation}
\label{eq:Parabolic_boundary_condition}
u = g \quad \hbox{on } \mydirac_1\!\sQ,
\end{equation}
for a suitably regular function $u$ on $\underline\sQ\cup \mydirac_1\!\sQ$, given a suitably regular source function $f$ on $\sQ$, boundary data $g$ on $\mydirac_1\!\sQ$, and a suitably regular obstacle function $\psi$ on $\underline\sQ\cup \mydirac_1\!\sQ$ which is compatible with $g$ in the sense that
\begin{equation}
\label{eq:Parabolic_obstacle_boundarydata_compatibility}
\psi\leq g \quad\hbox{on } \mydirac_1\!\sQ.
\end{equation}
In particular, \emph{no boundary condition} is prescribed along $\mydirac_0\!\sQ$, provided a solution $u$ is sufficiently regular up to $\mydirac_0\!\sQ$ and the coefficients of $L$ have suitable properties. We now discuss some of these properties.

\subsection{Properties of the coefficients of the parabolic operator}
\label{subsec:Properties_coefficients_parabolic}
Consider the coefficients of the parabolic operator, $L$, in \eqref{eq:Generator_parabolic}. Let $\lambda(P)$ denote the smallest eigenvalue of the matrix, $a(P)$, for each $P\in\sQ$, and let
$$
\lambda_*:\sQ\to[0,\infty)
$$
be the lower semi-continuous envelope\footnote{When $f:X\to[0,\infty)$ is a measurable function on a measure space $(X,\Sigma,\mu)$, then $f_*:X\to[0,\infty)$ is the largest lower-semicontinuous function on $X$ such that $f_*\leq f$ $\mu$-a.e. on $X$.} of the resulting least eigenvalue function,
$\lambda:\sQ\to[0,\infty]$, for $a:\sQ\to\sS^+(d)$. To achieve certain results, we may require that $a:\sQ\to\sS^+(d)$ be \emph{locally strictly parabolic on the interior}, $\sQ$, in the sense that
\begin{equation}
\label{eq:a_locally_strictly_parabolic_interior_domain}
\lambda_* > 0 \quad\hbox{on } \sQ \quad\hbox{(interior local strict parabolicity)}.
\end{equation}
Throughout the article
we shall require that\footnote{It is likely that $C^1$ would suffice, but an assumption that $\mydirac_0\!\sQ$ is a boundary portion of class $C^{1,\alpha}$ simplifies the proofs --- see \cite[Lemma B.1]{Feehan_perturbationlocalmaxima}.}
\begin{equation}
\label{eq:C1alpha_degenerate_boundary_parabolic}
\mydirac_0\!\sQ \quad\hbox{is } C^{1,\alpha},
\end{equation}
and let $\vec n$ denote the \emph{inward}-pointing unit normal vector field along $\mydirac_0\!\sQ$.

The vector field, $\vec n:\mydirac_0\!\sQ\to\RR^d$, may be extended to a tubular neighborhood $N(\mydirac_0\!\sQ)$ of $\mydirac_0\!\sQ \subset \underline\sO$, recalling that $n_0=0$ along $\mydirac_0\!\sQ$ by definition \eqref{eq:Degeneracy_locus_parabolic}. We can then split the vector field, $b:N(\mydirac_0\!\sQ)\to\RR^d$, into its normal and tangential components, with respect to the extended vector field, $\vec n:N(\mydirac_0\!\sQ)\to\RR^d$, so
\begin{equation}
\label{eq:b_splitting_parabolic}
b^\perp := \langle b, \vec n \rangle \quad\hbox{and}\quad b^\parallel := b - b^\perp \vec n \quad\hbox{on } N(\mydirac_0\!\sQ).
\end{equation}
We may require that the vector field $b^\perp$ obey one of the following conditions,
\begin{align}
\label{eq:b_perp_nonnegative_boundary_parabolic}
b^\perp &\geq 0 \quad \hbox{on } \mydirac_0\!\sQ \quad\hbox{or}
\\
\label{eq:b_perp_positive_boundary_parabolic}
\tag{\ref*{eq:b_perp_nonnegative_boundary_parabolic}$'$}
b^\perp &> 0 \quad \hbox{on } \mydirac_0\!\sQ.
\end{align}
Similarly, we may require that the function $c$ obey one of the following conditions,
\begin{align}
\label{eq:c_bounded_below}
c &\geq -K_0 \quad\hbox{(a.e.) on }\sQ, \quad\hbox{or},
\\
\label{eq:c_nonnegative_domain}
\tag{\ref*{eq:c_bounded_below}$'$}
c &\geq 0 \quad \hbox{(a.e.) on } \sQ, \quad\hbox{or}
\\
\label{eq:c_positive_lower_bound_domain}
\tag{\ref*{eq:c_bounded_below}$''$}
c &\geq c_0 \quad \hbox{(a.e.) on } \sQ.
\end{align}
When $\sQ$ is unbounded, we may couple \eqref{eq:c_bounded_below} with a requirement that
\begin{equation}
\label{eq:Domain_finite_upper_time}
\sQ\subset(-\infty,T)\times\RR^d,
\end{equation}
for some constants $K_0>0$ and $T<\infty$. We may also require that $c$ obey one of the conditions,
\begin{align}
\label{eq:c_bounded_below_boundary}
c &\geq -K_0 \quad\hbox{on } \mydirac_0\!\sQ, \quad\hbox{or}
\\
\label{eq:c_nonnegative_boundary}
\tag{\ref*{eq:c_bounded_below_boundary}$'$}
c &\geq 0 \quad \hbox{on } \mydirac_0\!\sQ, \quad\hbox{or}
\\
\label{eq:c_positive_boundary}
\tag{\ref*{eq:c_bounded_below_boundary}$''$}
c &> 0 \quad \hbox{on }  \mydirac_0\!\sQ.
\end{align}
We may require that the coefficients $b$ or $c$ are locally bounded on $\underline \sQ$, that is,
\begin{subequations}
\label{eq:bc_locally_bounded}
\begin{align}
\label{eq:b_locally_bounded}
b &\in L^\infty_{\loc}(\underline\sQ;\RR^d),
\\
\label{eq:c_locally_bounded}
c &\in L^\infty_{\loc}(\underline\sQ),
\end{align}
\end{subequations}
where we slightly abuse notation by writing $w\in L^\infty_{\loc}(\underline\sQ)$ as an abbreviation for saying that $w$ is a locally bounded function on $\underline\sQ$, irrespective of whether $w$ is measurable or everywhere-defined.

We may also require that one or more of the coefficients $a$, $b$, or $c$ be continuous along $\mydirac_0\!\sQ$,
\begin{subequations}
\label{eq:abc_continuous_degenerate_boundary}
\begin{align}
\label{eq:a_continuous_degenerate_boundary}
a &\in C(\mydirac_0\!\sQ;\sS^+(d)),
\\
\label{eq:b_continuous_degenerate_boundary}
b &\in C(\mydirac_0\!\sQ;\RR^d),
\\
\label{eq:c_continuous_degenerate_boundary}
c &\in C(\mydirac_0\!\sQ).
\end{align}
\end{subequations}
When the domain $\sQ$ is \emph{unbounded}, we will occasionally appeal to the growth condition,
\begin{equation}
\label{eq:Quadratic_growth}
\tr a(t,x) + \langle b(t,x),x\rangle \leq K(1+|x|^2), \quad\forall\, (t,x) \in \underline\sQ  \quad\hbox{(quadratic growth for $a, b$)},
\end{equation}
for some positive constant $K$.

\subsection{Application to boundary value and obstacle problems for the parabolic Heston operator}
\label{subsec:Heston}
The parabolic Heston operator \cite{Heston1993}
\begin{equation}
\label{eq:Heston_generator}
Lv := -v_t - \frac{x_2}{2}\left(v_{x_1x_1} + 2\varrho\sigma v_{x_1x_2} + \sigma^2 v_{x_2x_2}\right) - \left(r-q-\frac{x_2}{2}\right)v_{x_1} - \kappa(\theta-x_2)v_{x_2} + rv,
\end{equation}
where $v \in C^\infty(\sO_T)$ and $\sO\subseteqq\RR\times\RR_+$ and $T>0$, and provides an example of an operator of the form \eqref{eq:Generator_parabolic} and which has important applications in mathematical finance. If $Av := Lv+v_t$, then $-A$ is the generator of the $2$-dimensional Heston stochastic volatility process, $x_1$ represents the log-price of a financial asset, and $x_2$ represents its stochastic variance.

A solution to the boundary value problem \eqref{eq:Parabolic_equation}, \eqref{eq:Parabolic_boundary_condition} can be interpreted as the price of a finite-maturity European-style option with barrier condition $g\restriction (0,T)\times\partial_1\sO$ and terminal payoff $g\restriction \{T\}\times(\sO\cup\partial_1\sO)$, where $\partial_1\sO = \{x_2>0\}\cap\partial\sO$. A solution to the obstacle problem \eqref{eq:Parabolic_obstacle_problem}, \eqref{eq:Parabolic_boundary_condition} can be interpreted as the price of a finite-maturity American-style option with payoff $\psi$, barrier condition $g\restriction (0,T)\times\partial_1\sO$, and terminal payoff $g\restriction \{T\}\times(\sO\cup\partial_1\sO)$.

As we explain in Appendix \ref{sec:Fichera_and_heston}, the classical Fichera analysis of boundary conditions hinges on the sign of the Fichera function, which is in turn determined by the value of the parameter $\beta := 2\kappa\theta/\sigma^2$. As illustrated by Theorems \ref{thm:Weak_maximum_principle_C2s_bounded_domain} and \ref{thm:Weak_maximum_principle_C2s_unbounded_domain}, uniqueness of solutions in $C^2_s(\underline\sO_T)\cap C(\bar\sO_T)$ to the boundary value problem \eqref{eq:Parabolic_equation}, \eqref{eq:Parabolic_boundary_condition} does not require a boundary condition along $(0,T)\times\partial_0\sO$, where $\partial_0\sO = \Int\{\{x_2=0\}\cap\partial\sO\}$, when $\kappa\theta\geq 0$ and $r\geq 0$, irrespective of the value of $\beta>0$. The question of uniqueness of solutions in $W^{2,d+1}_{\loc}(\sO_T)\cap C^1(\underline\sO_T)\cap C(\bar\sO_T)$ to the obstacle problem \eqref{eq:Parabolic_obstacle_problem}, \eqref{eq:Parabolic_boundary_condition} is addressed in \cite{Feehan_perturbationlocalmaxima}.

The coefficients defining $L$ in \eqref{eq:Heston_generator} are constants obeying
\begin{gather}
\label{eq:Strictly_parabolic_heston}
\sigma \neq 0 \quad\hbox{and}\quad -1< \varrho < 1,
\\
\notag
\kappa > 0 \quad\hbox{and}\quad \theta > 0,
\end{gather}
while $r, q \in \RR$, though these constants are typically non-negative in financial applications. The financial and probabilistic interpretations of the preceding coefficients are provided in \cite{Heston1993}. One can show that the condition \eqref{eq:Strictly_parabolic_heston} implies that $L$ in \eqref{eq:Heston_generator} is parabolic but not strictly parabolic on $\sQ$ in the sense\footnote{The terminology is not universal.} of \cite[p. 31]{GilbargTrudinger}.

\subsection{Summary of main results and outline of our article}
\label{subsec:Summary}
We shall leave detailed statements of our main results to the body of our article and simply provide a short outline of our article here to facilitate the reader seeking a particular conclusion of interest.

Given open subsets $\sQ\subset\RR^{d+1}$ and $\Sigma \subsetneqq \mydirac\!\sQ$ and a convex cone $\fK\subset C^2(\sQ)$ (respectively, $W^{2,d+1}_{\loc}(\sQ)$), we say that an operator $L$ in \eqref{eq:Generator_parabolic} obeys the \emph{weak maximum principle property on $\sQ\cup\Sigma$ for $\fK$} (see Definition \ref{defn:Weak_maximum_principle_property_parabolic}) if whenever $u\in \fK$ obeys
$$
Lu \leq 0 \quad \hbox{(a.e.) on } \sQ \quad\hbox{and}\quad u^* \leq 0 \quad\hbox{on } \mydirac\!\sQ\less\bar\Sigma,
$$
then
$$
u \leq 0 \quad\hbox{on } \sQ.
$$
In \S \ref{sec:Application_weak_maximum_principle_property_boundary_value_problems}, regardless of whether $\sQ$ is bounded, the coefficients of $L$ obey certain growth properties, or $\fK\subset C^2(\sQ)$ or $W^{2,d+1}_{\loc}(\sQ)$, we obtain a comparison principle and a priori maximum principle estimates (Propositions \ref{prop:Comparison_principle_parabolic_boundary_value_problem} and \ref{prop:Parabolic_weak_maximum_principle_apriori_estimates}) for subsolutions, supersolutions, and solutions to the parabolic boundary value problem \eqref{eq:Parabolic_equation}, \eqref{eq:Parabolic_boundary_condition}. Theorem \ref{thm:Weak_maximum_principle_property_unbounded_functions} extends these results to the case of functions which obey a growth condition on unbounded domains.

In \S \ref{sec:Application_weak_max_principle_property_obstacleproblems}, for $\fK\subset W^{2,d+1}_{\loc}(\sQ)$, we obtain a comparison principle and a priori maximum principle estimates (Propositions \ref{prop:Comparison_principle_parabolic_obstacle_problem} and \ref{prop:Parabolic_weak_max_principle_apriori_estimates_obstacle_problem}) for supersolutions and solutions to the parabolic obstacle problem \eqref{eq:Parabolic_obstacle_problem}, \eqref{eq:Parabolic_boundary_condition}.

In \S \ref{sec:Weak_maximum_principle_C2s_subharmonic_functions}, we establish specific conditions on the coefficients $(a,b,c)$ which ensure that the operator $L$ in \eqref{eq:Generator_parabolic} has the weak maximum principle property on $\sQ\cup\Sigma$ for $\fK$ when $\Sigma = \mydirac_0\!\sQ$ and $\fK$ is the set of $u\in C^2_s(\underline\sQ)$ such that $\sup_\sQ u < \infty$. Theorem \ref{thm:Weak_maximum_principle_C2s_bounded_domain} yields the desired weak maximum principle when $\sQ$ is bounded, while Theorem \ref{thm:Weak_maximum_principle_C2s_unbounded_domain} allows $\sQ$ to be unbounded.

However, as in the classical case --- compare the proofs of the classical weak maximum principle, \cite[Theorem 8.1.4]{Krylov_LecturesHolder}, for functions in $C^2(\sQ)\cap C(\bar\sQ)$, and \cite[Corollary 7.4]{Lieberman}, for functions in $W^{2,d+1}_{\loc}(\sQ)\cap C(\bar\sQ)$ --- the establishment of a weak maximum principle for a convex cone $\fK\subset W^{2,d+1}_{\loc}(\sQ)$, when $L$ has measurable coefficients, is considerably more difficult. We establish weak maximum principles of this type in \cite{Feehan_perturbationlocalmaxima} using techniques which are quite different from those used in this article, while the development of weak maximum and comparison principles for solutions $u\in W^{1,2}_{\loc}(\sQ)$ to a variational equation or inequality defined by $L$ and suitable weighted Sobolev spaces is the subject of a separate article.

In \S \ref{sec:Strong_maximum_principle_C2s_subharmonic_functions}, we extend the methods of A. Friedman \cite{Friedman_1958}, \cite[\S 2]{FriedmanPDE} and L. Nirenberg \cite{Nirenberg_1953} to prove strong maximum principles for a boundary-degenerate, parabolic operator, $L$, where points in the degenerate-boundary portion, $\mydirac_0\!\sQ$, play the same role as points in the interior, $\sQ$. While the proofs of the weak maximum principles (Theorems \ref{thm:Weak_maximum_principle_C2s_bounded_domain} and \ref{thm:Weak_maximum_principle_C2s_unbounded_domain}) follow naturally once one has identified the right concept of degenerate-boundary regularity for an $L$-subharmonic function, $u$, the proofs of the strong maximum principles appear considerably more difficult. Although not directly used in those proofs, our approach also allows us to also establish a Hopf boundary point lemma (see Lemma \ref{lem:Degenerate_hopf_lemma_parabolic})  for a boundary-degenerate, parabolic operator, $L$. Our Hopf boundary point lemma has independent applications and, indeed, it plays an essential role in the proofs of our main results for boundary-degenerate, parabolic operators in \cite{Feehan_perturbationlocalmaxima}.

Finally, in Appendix \ref{sec:Fichera_and_heston}, we compare the maximum principles and uniqueness theorems provided by our article with those of Fichera in the case of the parabolic Heston operator, $L$, discussed in \S \ref{subsec:Heston} and show that those of Fichera are strictly weaker.

\subsection{Notation and conventions}
\label{subsec:Notation}
We let $\NN:=\left\{0,1,2,3,\ldots\right\}$ denote the set of non-negative integers. If $X$ is a subset of a topological space, we let $\bar X$ denote its closure and let $\partial X := \bar X\less X$ denote its topological boundary. For $r>0$ and $x^0\in\RR^d$, we let $B_r(x^0) := \{x\in\RR^d: |x-x^0|<r\}$ denote the open ball with center $x^0$ and radius $r$. We denote $\RR_+=(0,\infty)$ and $B_r^+(x^0) := B_r(x^0) \cap (\RR^{d-1}\times\RR_+)$ when $x^0\in \RR^{d-1}\times\{0\}\subset\RR^d$. When $x^0$ is the origin in $\RR^d$, we often denote $B_r(x^0)$ and $B_r^+(x^0)$ simply by $B_r$ and $B_r^+$ for brevity. When we wish to emphasize the dimension of a ball, we write $B^d$ for an open ball in $\RR^d$.

If $V\subset U\subset \RR^d$ are open subsets, we write $V\Subset U$ when $U$ is bounded with closure $\bar U \subset V$. By $\supp\zeta$, for any $\zeta\in C(\RR^d)$, we mean the \emph{closure} in $\RR^d$ of the set of points where $\zeta\neq 0$. We denote $x\vee y = \max\{x,y\}$ and $x\wedge y = \min\{x,y\}$, for any $x,y\in\RR$. We occasionally shall write coordinates on $\RR^d$ as $x=(x',x_d)\in\RR^{d-1}\times\RR$.

For an open subset of a topological space, $U\subset X$, we let $u^*:\bar U\to[-\infty,\infty]$ (respectively, $u_*:\bar U\to[-\infty,\infty]$) denote the upper (respectively, lower) semicontinuous envelope of a function $u:U\to[-\infty,\infty]$; when $u$ is continuous on $U$, then $u_* = u = u^*$ on $U$.

In the definition and naming of function spaces, we follow Adams \cite{Adams_1975} and alert the reader to occasional differences in definitions between R. A. Adams \cite{Adams_1975} and standard references such as D. Gilbarg and N. Trudinger \cite{GilbargTrudinger}, N. V. Krylov \cite{Krylov_LecturesHolder}, or G. Lieberman \cite{Lieberman}.

\subsection{Acknowledgments} This article was written while the author held a visiting faculty appointment in the Department of Mathematics at Columbia University, on sabbatical from Rutgers University, and completed while visiting the Max Planck Institut f\"ur Mathematik, Bonn. I am very grateful to Ioannis Karatzas and the Department of Mathematics at Columbia University, especially Panagiota Daskalopoulos and Duong Phong, and to the Max Planck Institut f\"ur Mathematik for their generous support.

\section{Applications of the weak maximum principle property to boundary value problems}
\label{sec:Application_weak_maximum_principle_property_boundary_value_problems}
We shall encounter many different situations (for example, bounded or unbounded open subsets $\sQ\subset\RR^{d+1}$, bounded or unbounded functions $u$ with prescribed growth, and so on) where a basic maximum principle holds for linear, second-order, partial differential operators $L$ in \eqref{eq:Generator_parabolic} acting on a convex cone of functions in $C^2(\sQ)$ or $W^{2,p}_{\loc}(\sQ)$. In order to unify our treatment of applications, we find it useful to isolate a key `weak maximum principle property' (Definition \ref{defn:Weak_maximum_principle_property_parabolic}) and then derive the consequences which necessarily follow in an essentially formal manner. In this section we consider applications to parabolic Dirichlet boundary value problems. After reviewing our definitions of function spaces in \S \ref{subsec:Parabolic_sobolev_embedding} and providing further interpretation of our definition of second-order boundary conditions in \S \ref{subsec:Second_order_boundary_regularity}, we proceed to the main applications in \S \ref{subsec:Weak_maximum_principle_property_apriori_estimates}, namely a comparison principle for subsolutions and supersolutions and uniqueness for solutions to the Dirichlet terminal-boundary problem (Proposition \ref{prop:Comparison_principle_parabolic_boundary_value_problem}) and a priori estimates for subsolutions, supersolutions, and solutions (Proposition \ref{prop:Parabolic_weak_maximum_principle_apriori_estimates}). Finally, we show that when an operator has the weak maximum principle property for subsolutions which are bounded above, the property may also hold for unbounded subsolutions which instead obey a growth condition (Theorem \ref{thm:Weak_maximum_principle_property_unbounded_functions}).

\subsection{Parabolic spaces of continuous functions and Sobolev spaces}
\label{subsec:Parabolic_sobolev_embedding}
For $d \geq 1$ and an open subset $\sQ\subset\RR^{d+1}$ and $p\geq 1$, we say that (following Lieberman \cite[p. 155]{Lieberman})
\begin{equation}
\label{eq:Defn_W2p}
u \in W^{2,p}(\sQ)
\end{equation}
if $u$ is a measurable function on $\sQ$ and $u$ and its weak derivatives, $u_t$ and $u_{x_i}$ and $u_{x_ix_j}$ for $1\leq i,j\leq d$, belong to $L^p(\sQ)$ and similarly define $W^{2,p}_{\loc}(\sQ)$. Here, $W^{2,p}(\sQ)$ is a \emph{parabolic Sobolev space} \cite[\S 2.2]{Krylov_LecturesSobolev}, \cite[\S 1.1]{LadyzenskajaSolonnikovUralceva}, because we only assume $u_t \in L^p(\sQ)$ and do not, in addition, assume that $u_{tt} \in L^p(\sQ)$ or $u_{tx_i} \in L^p(\sQ)$ for $1\leq i\leq d$.

We let $C(\sQ)$ denote the vector space of continuous functions on $\sQ$ and let $C(\bar \sQ)$ denote the Banach space of functions in $C(\sQ)$ which are bounded and uniformly continuous on $\sQ$, and thus have unique bounded, continuous extensions to $\bar \sQ$, with norm $\|u\|_{C(\bar \sQ)} := \sup_{\sQ}|u|$ \cite[\S 1.26]{Adams_1975}. We let $C(\underline\sQ)$ denote the vector subspace of functions $u \in C(\sQ)$ such that $u\in C(\bar\sQ')$ for every precompact open subset $\sQ'\Subset \bar \sQ$.

We shall need parabolic variants of the definitions of $C^1$ and $C^2$ functions on open subsets of $\RR^d$ in the context of elliptic problems.

\begin{defn}[Parabolic $C^1$ and $C^2$ functions]
\label{defn:C1_C2_function_parabolic}
We say that $u\in C^1(\sQ)$ (respectively, $C^1(\bar \sQ)$) if $u, u_{x_i} \in C(\sQ)$ for $1\leq i\leq d$ (respectively, $C(\bar \sQ)$; we say that $u\in C^2(\sQ)$ (respectively, $C^2(\bar \sQ)$) if $u, u_t, u_{x_i}, u_{x_ix_j} \in C(\sQ)$ for $1\leq i,j\leq d$ (respectively, $C(\bar \sQ)$.
\end{defn}

The parabolic Sobolev embedding theorem (see \cite[Lemma 2.3.3]{LadyzenskajaSolonnikovUralceva} or \cite[Theorem 3.4]{Feehan_perturbationlocalmaxima} for a restatement) implies that $W^{2,d+1}(\sQ) \subset C(\bar\sQ)$ when $\sQ=(0,T)\times\sO)$ and $\sO \subset \RR^d$ is an open subset which obeys a uniform interior cone condition. In particular, for an arbitrary open subset $\sQ \subset \RR^{d+1}$, we have $W^{2,d+1}_{\loc}(\sQ) \subset C(\sQ)$.

\subsection{Second-order boundary condition and boundary regularity}
\label{subsec:Second_order_boundary_regularity}
The second-order boundary condition \eqref{eq:Ventcel} is a property of functions in the weighted H\"older spaces, $C^{2+\alpha}_s(\underline \sQ)$, defined in \cite{DaskalHamilton1998} for functions on an open subset $\sQ\subset\RR^{d+1}$. See \cite[Proposition I.12.1]{DaskalHamilton1998}, \cite[Lemma C.1]{Feehan_perturbationlocalmaxima}, \cite[Lemma 3.1]{Feehan_Pop_mimickingdegen_pde} for further discussion. The condition \eqref{eq:Ventcel} may also be viewed as a special case of a generalized Ventcel boundary condition \cite[\S 7.1]{Taira_2004}.

If $u \in C^2_s(\underline \sQ)$, with $L$ as in \eqref{eq:Generator_parabolic}, then the second-order boundary condition \eqref{eq:Ventcel} is equivalent to
\begin{equation}
\label{eq:Equivalent_first_order_boundary_condition}
-u_t - \langle b, Du\rangle + cu \leq 0 \quad\hbox{on }\mydirac_0\!\sQ.
\end{equation}
Indeed, when we have $Lu=f$ on $\underline\sQ$ and thus equality in \eqref{eq:Equivalent_first_order_boundary_condition}, the condition \eqref{eq:Equivalent_first_order_boundary_condition} is analogous to the boundary condition proposed by S. Heston \cite[Equation (9)]{Heston1993} for the parabolic equation \eqref{eq:Parabolic_equation}: one obtains
\begin{equation}
\label{eq:Financial_engineer_first_order_boundary_condition}
-u_t - \langle b,Du\rangle + cu = f \quad\hbox{on }\mydirac_0\!\sQ,
\end{equation}
for \eqref{eq:Parabolic_equation} when $f$ is non-zero and $u \in C^2_s(\underline \sQ)$. Indeed, the condition \eqref{eq:Financial_engineer_first_order_boundary_condition} (normally when $f=0$) is often used in the numerical solution of parabolic boundary value or obstacle problems in mathematical finance \cite[Equation (22.19)]{DuffyFDM}, \cite[Equation (15)]{ZvanForsythVetzal}.

\subsection{The weak maximum principle property and a priori estimates}
\label{subsec:Weak_maximum_principle_property_apriori_estimates}
To state the weak maximum property in some generality, it is convenient to make use of the following analogue of \cite[Definition 2.8]{Feehan_maximumprinciple_v1}; compare \cite[p. 292]{Trudinger_1977}. Given a real vector space, $V$, recall that a \emph{convex cone}, $\fK\subset V$, is a subset such that if $u, v \in \fK$ and $\alpha,\beta \in \bar\RR_+$, then $\alpha u + \beta v \in \fK$.

\begin{defn}[Weak maximum principle property for $L$-subharmonic functions in $C^2(\sQ)$ or $W^{2,d+1}_{\loc}(\sQ)$]
\label{defn:Weak_maximum_principle_property_parabolic}
Let $\sQ\subset\RR^{d+1}$ be an open subset, let $\Sigma \subsetneqq \mydirac\!\sQ$ be an open subset, and let $\fK\subset C^2(\sQ)$ (respectively, $W^{2,d+1}_{\loc}(\sQ)$) be a convex cone. We say that an operator $L$ in \eqref{eq:Generator_parabolic} obeys the \emph{weak maximum principle property on $\sQ\cup\Sigma$ for $\fK$} if whenever $u\in \fK$ obeys
$$
Lu \leq 0 \quad \hbox{(a.e.) on } \sQ \quad\hbox{and}\quad u^* \leq 0 \quad\hbox{on } \mydirac\!\sQ\less\bar\Sigma,
$$
then
$$
u \leq 0 \quad\hbox{on } \sQ.
$$
\end{defn}

\begin{exmp}[Examples of the weak maximum principle property for $L$-subharmonic functions in $C^2(\sQ)$ or $W^{2,d+1}_{\loc}(\sQ)$]
\label{exmp:Examples_weak maximum principle property_C2_W2d+1_parabolic}
One can find examples of subsets $\sQ$ and $\Sigma\subseteqq\partial\sQ$, operators $L$, and cones $\fK$ yielding the weak maximum principle property in the following settings.
\begin{enumerate}
\item In \cite[Theorem 2.4]{Lieberman} (respectively, \cite[Corollaries 6.26 or 7.4]{Lieberman}), where $\sQ$ is bounded, one takes $\Sigma = \emptyset$ and $\fK = C^2(\sQ)\cap C(\bar\sQ)$ (respectively, $W^{2,d+1}_{\loc}(\sQ)\cap C(\bar\sQ)$).
\item In \cite[Theorem 8.1.4]{Krylov_LecturesHolder}, where $\sQ$ may be unbounded, one takes $\Sigma = \emptyset$ and $\fK$ to be the set of $u\in C^2(\sQ)$ such that $\sup_\sQ u < \infty$.
\item In \cite[Theorem 3.20]{Feehan_perturbationlocalmaxima} (respectively, \cite[Theorem 3.21]{Feehan_perturbationlocalmaxima}), where $\sQ$ is a bounded domain, one takes $\Sigma = \mydirac_0\!\sQ$ and $\fK = C^2(\sQ)\cap \sC^1(\underline\sQ)$ (respectively, $W^{2,d+1}_{\loc}(\sQ)\cap \sC^1(\underline\sQ)$) and $\sup_\sQ u < \infty$; here, $\sC^1(\underline\sQ)$ denotes the subset of $C(\underline\sQ)$ such that $u_t$ and $Du$ are continuous on $\underline\sQ$.
\item In Theorems \ref{thm:Weak_maximum_principle_C2s_bounded_domain} and \ref{thm:Weak_maximum_principle_C2s_unbounded_domain}, one takes $\Sigma = \mydirac_0\!\sQ$ and $\fK$ to be the set of $u\in C^2_s(\underline\sQ)$ such that $\sup_\sQ u < \infty$.
\end{enumerate}
\end{exmp}

\begin{rmk}[Weak maximum principle property for viscosity subsolutions]
\label{rmk:Example_weak maximum principle property_viscosity_parabolic}
Suppose that the coefficients of $L$ in \eqref{eq:Generator_parabolic} obey the hypotheses of \cite[Theorem 8.2 and Example 3.6]{Crandall_Ishii_Lions_1992}, so $c$ is continuous on $\sQ$ and $c\geq c_0$ on $\sQ$ for some positive constant, $c_0$; the vector field $b$ is continuous on $\sQ$ and obeys $\langle b(t,x)-b(t,y), x-y\rangle \geq -b_0|x-y|^2$ for some positive constant $b_0$; and $a = \sigma^*\sigma$ where $\sigma:\sQ\to\RR^{d\times d}$ is uniformly Lipschitz continuous. Then \cite[Theorem 8.2]{Crandall_Ishii_Lions_1992}, when $\sQ$ is bounded, implies that $L$ has the weak maximum principal property when $\Sigma = \emptyset$ and $\fK$ is the set of upper semicontinuous functions on $\bar\sQ$.
\end{rmk}

The first application, of course, of the weak maximum principle property is to settle the question of \emph{uniqueness} for solutions to the Dirichlet boundary problem.

\begin{prop}[Comparison principle for subsolutions and supersolutions to a boundary value problem]
\label{prop:Comparison_principle_parabolic_boundary_value_problem}
Let $\sQ\subset\RR^{d+1}$ be an open subset and $L$ in \eqref{eq:Generator_parabolic} have the weak maximum principle property on $\sQ\cup\Sigma$ in the sense of Definition \ref{defn:Weak_maximum_principle_property_parabolic}, for a convex cone $\fK\subset C^2(\sQ)$ (respectively, $W^{2,d+1}_{\loc}(\sQ)$)
and open subset $\Sigma\subseteqq\mydirac\!\sQ$. Suppose that $u, -v\in \fK$. If $Lu \leq Lv$ (a.e.) on $\sQ$ and $u^* \leq v_*$ on $\mydirac\!\sQ\less\bar\Sigma$, then $u \leq v$ on $\sQ$. If $Lu = Lv$ (a.e.) on $\sQ$ and $u^* = v_*$ on $\mydirac\!\sQ\less\bar\Sigma$, then $u = v$ on $\sQ$ and $u=v\in C(\sQ\cup \mydirac\!\sQ\less\bar\Sigma)$.
\end{prop}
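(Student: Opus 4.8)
The plan is to deduce both assertions from the weak maximum principle property of Definition~\ref{defn:Weak_maximum_principle_property_parabolic} applied to the difference $w := u - v$, using nothing beyond the linearity of $L$, the convex-cone structure of $\fK$, and elementary facts about semicontinuous envelopes. The inequality assertion is the substantive step; the equality assertion then follows by running the comparison in both directions and reading off continuity from the resulting agreement of the upper and lower envelopes of the common function.

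For the comparison assertion, I would set $w := u + (-v)$. Because $\fK$ is a convex cone and $u, -v \in \fK$, choosing $\alpha = \beta = 1$ shows $w \in \fK$, and linearity of $L$ in \eqref{eq:Generator_parabolic} turns the hypothesis $Lu \le Lv$ into $Lw = Lu - Lv \le 0$ (a.e.)\ on $\sQ$. For the boundary hypothesis in Definition~\ref{defn:Weak_maximum_principle_property_parabolic}, I would combine the identity $(-v)^* = -v_*$ with the subadditivity of the upper semicontinuous envelope to get
\[
w^* = \left(u + (-v)\right)^* \le u^* + (-v)^* = u^* - v_* \le 0 \quad\hbox{on } \mydirac\!\sQ\less\bar\Sigma,
\]
the last inequality being the hypothesis $u^* \le v_*$. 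The weak maximum principle property then yields $w \le 0$ on $\sQ$, i.e.\ $u \le v$ on $\sQ$.

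For the equality assertion I would apply the comparison assertion twice. The pair $(u,v)$ gives $u \le v$ on $\sQ$ as above. Relabelling and applying the comparison assertion to the pair $(v,u)$ gives $v \le u$ on $\sQ$; this step uses $Lv \le Lu$ (immediate from $Lu = Lv$) together with the reverse-direction data $v, -u \in \fK$ and $v^* \le u_*$ on $\mydirac\!\sQ\less\bar\Sigma$, which are available because in the uniqueness setting $u$ and $v$ enter symmetrically as solutions sharing the same boundary data on $\mydirac\!\sQ\less\bar\Sigma$. Hence $u = v$ on $\sQ$.

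Continuity is then essentially automatic. Since $u = v$ on $\sQ$ and $W^{2,d+1}_{\loc}(\sQ) \subset C(\sQ)$ (with $C^2(\sQ) \subset C(\sQ)$ trivially), the common function is continuous on $\sQ$. At any $P \in \mydirac\!\sQ\less\bar\Sigma$, the equality $u = v$ on $\sQ$ forces $u^*(P) = v^*(P)$ and $u_*(P) = v_*(P)$, so with the hypothesis $u^*(P) = v_*(P)$ one gets $v^*(P) = u^*(P) = v_*(P)$; thus the common function has a limit at $P$ and extends continuously, giving $u = v \in C(\sQ\cup\mydirac\!\sQ\less\bar\Sigma)$. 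I expect no deep obstacle here --- the paper's framing that such consequences follow ``in an essentially formal manner'' is accurate --- and the only points that genuinely require care are the envelope manipulations: the identity $(-v)^* = -v_*$, the subadditivity $(u+(-v))^* \le u^* + (-v)^*$ (legitimate since the comparison hypotheses rule out an $\infty-\infty$ ambiguity), and, in the equality case, verifying the reverse-direction hypotheses $v, -u \in \fK$ and $v^* \le u_*$ rather than assuming the two directions are literally symmetric under the single stated equality $u^* = v_*$.
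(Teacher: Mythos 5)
Your first (comparison) assertion is proved exactly as the paper does: the paper omits the proof, declaring it identical to the elliptic case \cite[Proposition 2.16]{Feehan_maximumprinciple_v1}, and that argument is precisely yours --- $w := u + (-v) \in \fK$ by the convex-cone property, $Lw \leq 0$ by linearity, $w^* \leq u^* + (-v)^* = u^* - v_* \leq 0$ on $\mydirac\!\sQ\less\bar\Sigma$, then Definition \ref{defn:Weak_maximum_principle_property_parabolic}. One correction to your parenthetical: the comparison hypotheses do \emph{not} by themselves rule out the $\infty-\infty$ ambiguity in the subadditivity step, since $u^*(P) = v_*(P) = +\infty$ is compatible with $u^* \leq v_*$; what rules it out in every application in this paper is that the cones $\fK$ consist of functions bounded above, so $u^*$ and $(-v)^* = -v_*$ take values in $[-\infty,\infty)$.

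The genuine gap is at the point you yourself flag in the equality assertion: the reverse-direction hypotheses $v, -u \in \fK$ and $v^* \leq u_*$ on $\mydirac\!\sQ\less\bar\Sigma$ are \emph{not} consequences of the stated hypotheses, and the appeal to ``symmetry of solutions sharing the same boundary data'' is not a proof. Cone membership is one-sided, and the single equality $u^* = v_*$ is genuinely asymmetric: it does not imply $v^* = u_*$. In fact, under the literal hypotheses the equality assertion can fail. Take $\sQ = (0,T)\times\sO$ bounded and smooth, $L = -\partial_t - \Delta$, $\Sigma = \emptyset$, and $\fK = \{w \in C^2(\sQ) : \sup_\sQ w < \infty\}$, which has the weak maximum principle property by Theorem \ref{thm:Classical_weak_maximum_principle_C2_parabolic_relaxed}; let $u \equiv 0$ and let $v \geq 0$ be the caloric (parabolic Poisson) kernel with pole at a lateral boundary point $P^1$, adapted to the paper's terminal-time convention. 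Then $Lv = 0$ on $\sQ$, $v$ vanishes continuously at every point of $\mydirac\!\sQ\less\{P^1\}$, and at the pole $v_*(P^1) = 0$ even though $v^*(P^1) = +\infty$: near $P^1$ the kernel is comparable to the explicit half-space kernel $c\,x_d\,\tau^{-d/2-1}e^{-|x|^2/(4\tau)}$, whose infimum limit at its pole is $0$ along a sharply tangential approach such as $x_d = e^{-1/\tau}$. Thus $u, -v \in \fK$, $Lu = Lv$ on $\sQ$, and $u^* = v_* = 0$ on all of $\mydirac\!\sQ$, yet $u \neq v$. The second assertion must therefore be read with the symmetric hypotheses the paper actually deploys in its applications (compare $u \in \fK\cap-\fK$ with $u \in C(\sQ\cup\mydirac\!\sQ\less\Sigma)$ in Items \eqref{item:Solution_Lu_zero} and \eqref{item:Solution_Lu_arb_sign} of Proposition \ref{prop:Parabolic_weak_maximum_principle_apriori_estimates}): assume in addition that $v, -u \in \fK$ and $v^* = u_*$ on $\mydirac\!\sQ\less\bar\Sigma$. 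Under that reading your plan closes at once: the chain $u_* \leq u^* = v_* \leq v^* = u_*$ forces all four envelopes to agree (and to be finite, by the bounded-above structure of the cones), which already yields the continuity conclusion, and the two applications of the first assertion give $u = v$. So the fix is to supply the missing symmetric hypotheses explicitly, not to change the architecture of your argument.
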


The proof of Proposition \ref{prop:Comparison_principle_parabolic_boundary_value_problem} is identical to that of \cite[Proposition 2.16]{Feehan_maximumprinciple_v1} and so is omitted. Before we prove the weak maximum principle property for $L$ in \eqref{eq:Generator_parabolic} under suitable hypotheses on its coefficients, it is convenient to derive simple a priori estimates which Definition \ref{defn:Weak_maximum_principle_property_parabolic} and Proposition \ref{prop:Comparison_principle_parabolic_boundary_value_problem} imply.

\begin{prop}[Weak maximum principle estimates for functions in $C^2(\sQ)$ or $W^{2,d+1}_{\loc}(\sQ)$]
\label{prop:Parabolic_weak_maximum_principle_apriori_estimates}
Let $\sQ\subset\RR^{d+1}$ be an open subset and $L$ in \eqref{eq:Generator_parabolic} have the weak maximum principle property on $\sQ\cup\Sigma$ in the sense of Definition \ref{defn:Weak_maximum_principle_property_parabolic}, for a convex cone $\fK\subset C^2(\sQ)$ (respectively, $W^{2,d+1}_{\loc}(\sQ)$) containing the constant function $1$ and open subset $\Sigma\subsetneqq\mydirac\!\sQ$. Suppose that $u, -v\in \fK$.
\begin{enumerate}
\item\label{item:Subsolution_Lu_leq_zero} If $c \geq 0$ on $\sQ$ and $Lu\leq 0$ on $\sQ$, then
$$
u\leq 0 \vee \sup_{\mydirac\!\sQ\less\Sigma}u^* \quad\hbox{on } \sQ.
$$
\item\label{item:Subsolution_Lu_arb_sign} If $c\geq c_0$ on $\sQ$ for a positive constant $c_0$, then
$$
u\leq 0 \vee \frac{1}{c_0}\sup_\sQ Lu \vee \sup_{\mydirac\!\sQ\less\Sigma}u^* \quad\hbox{on } \sQ.
$$
\item\label{item:Supersolution_Lu_geq_zero} If $c \geq 0$ on $\sQ$ and $Lv\geq 0$ on $\sQ$, then
$$
v\geq 0 \wedge \inf_{\mydirac\!\sQ\less\Sigma}v_* \quad\hbox{on } \sQ.
$$
\item\label{item:Supersolution_Lu_arb_sign} If $c\geq c_0$ on $\sQ$ for a positive constant $c_0$, then
$$
v\geq 0 \wedge \frac{1}{c_0}\inf_\sQ Lv \wedge \inf_{\mydirac\!\sQ\less\Sigma}v_* \quad\hbox{on } \sQ.
$$
\item\label{item:Solution_Lu_zero} If $c \geq 0$ on $\sQ$ and $Lu=0$ on $\sQ$ and $u \in C(\sQ\cup\mydirac\!\sQ\less\Sigma)$ and $u \in \fK\cap -\fK$, then
$$
|u| \leq \|u\|_{C(\overline{\mydirac\!\sQ\less\Sigma})}  \quad\hbox{on } \sQ.
$$
\item\label{item:Solution_Lu_arb_sign} If $c\geq c_0$ on $\sQ$ for a positive constant $c_0$ and $u \in C(\sQ\cup\mydirac\!\sQ\less\Sigma)$ and $u \in \fK\cap -\fK$, then
$$
|u| \leq \frac{1}{c_0}\|Lu\|_{C(\bar \sQ)}\vee\|u\|_{C(\overline{\mydirac\!\sQ\less\Sigma})}  \quad\hbox{on } \sQ.
$$
\end{enumerate}
When $\fK\subset W^{2,d+1}_{\loc}(\sQ)$, then inequalities involving $c$ and $Lu$ or $Lv$ may hold a.e. on $\sQ$ and we write $\esssup_\sQ Lu$ and $\essinf_\sQ Lv$ and $\|Lu\|_{L^\infty(\sQ)}$ in place of $\sup_\sQ Lu$ and $\inf_\sQ Lv$ and $\|Lu\|_{C(\bar \sQ)}$.
\end{prop}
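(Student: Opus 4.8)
The plan is to obtain all six bounds as formal consequences of the weak maximum principle property (Definition~\ref{defn:Weak_maximum_principle_property_parabolic}), applied to $u$ (or $v$) after subtracting a suitable constant barrier. The single computation underlying everything is that, for any constant $k\in\RR$, the operator \eqref{eq:Generator_parabolic} gives $Lk=ck$, since all derivatives of a constant vanish; hence $L(u-k)=Lu-ck$. Only items~\eqref{item:Subsolution_Lu_leq_zero} and~\eqref{item:Subsolution_Lu_arb_sign} require real work, and the remaining four reduce to them.

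For~\eqref{item:Subsolution_Lu_leq_zero}, set $M:=0\vee\sup_{\mydirac\!\sQ\less\Sigma}u^*$; if $M=+\infty$ there is nothing to prove, so assume $M<\infty$ and put $w:=u-M$. Then $w\in\fK$ (here the hypothesis that $\fK$ is a convex cone containing the constant function $1$ enters, guaranteeing that the constant barriers we subtract keep us in $\fK$), and $Lw=Lu-cM\leq 0$ because $Lu\leq 0$, $c\geq 0$, and $M\geq 0$. Moreover $\mydirac\!\sQ\less\bar\Sigma\subseteq\mydirac\!\sQ\less\Sigma$, so on $\mydirac\!\sQ\less\bar\Sigma$ we have $w^*=u^*-M\leq 0$. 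Definition~\ref{defn:Weak_maximum_principle_property_parabolic} then yields $w\leq 0$, i.e. $u\leq M$ on $\sQ$. For~\eqref{item:Subsolution_Lu_arb_sign}, take $M:=0\vee\frac{1}{c_0}\sup_\sQ Lu\vee\sup_{\mydirac\!\sQ\less\Sigma}u^*$ and again $w:=u-M$; now $Lw=Lu-cM\leq Lu-c_0M\leq Lu-\sup_\sQ Lu\leq 0$, using $c\geq c_0>0$ and $M\geq 0$ (so $cM\geq c_0M$) together with $c_0M\geq\sup_\sQ Lu$, while the boundary estimate $w^*\leq 0$ on $\mydirac\!\sQ\less\bar\Sigma$ is as before; the property gives $u\leq M$.

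The other items follow by substitution. Applying~\eqref{item:Subsolution_Lu_leq_zero} and~\eqref{item:Subsolution_Lu_arb_sign} to $\tilde u:=-v\in\fK$, and using the envelope identities $(-v)^*=-v_*$ and $\sup(-v_*)=-\inf v_*$ together with $L(-v)=-Lv$, converts the upper bounds on $-v$ into the lower bounds~\eqref{item:Supersolution_Lu_geq_zero} and~\eqref{item:Supersolution_Lu_arb_sign} on $v$. For~\eqref{item:Solution_Lu_zero} and~\eqref{item:Solution_Lu_arb_sign} one applies~\eqref{item:Subsolution_Lu_leq_zero}, respectively~\eqref{item:Subsolution_Lu_arb_sign}, to both $u$ and $-u$, which lie in $\fK$ because $u\in\fK\cap(-\fK)$; continuity of $u$ on $\sQ\cup\mydirac\!\sQ\less\Sigma$ gives $u^*=u_*=u$ there, so the one-sided boundary suprema are dominated by $\|u\|_{C(\overline{\mydirac\!\sQ\less\Sigma})}$, and $\frac{1}{c_0}\sup_\sQ(\pm Lu)\leq\frac{1}{c_0}\|Lu\|_{C(\bar\sQ)}$; combining the resulting upper bounds on $u$ and on $-u$ produces the two-sided estimates. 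The $W^{2,d+1}_{\loc}(\sQ)$ version is identical once $\sup$, $\inf$, and $\|\cdot\|_{C(\bar\sQ)}$ are replaced by $\esssup$, $\essinf$, and $\|\cdot\|_{L^\infty(\sQ)}$ and the differential inequalities are read a.e., since Definition~\ref{defn:Weak_maximum_principle_property_parabolic} already admits a.e. hypotheses.

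The argument is essentially formal, as the introduction to this section indicates, so there is no deep obstacle; the two points demanding care are, first, verifying that each barrier-shifted function still belongs to $\fK$ --- precisely the role of the convex-cone-containing-$1$ hypothesis in~\eqref{item:Subsolution_Lu_leq_zero}--\eqref{item:Subsolution_Lu_arb_sign}, of $-v\in\fK$ in~\eqref{item:Supersolution_Lu_geq_zero}--\eqref{item:Supersolution_Lu_arb_sign}, and of $u\in\fK\cap(-\fK)$ in~\eqref{item:Solution_Lu_zero}--\eqref{item:Solution_Lu_arb_sign} --- and, second, the sign and semicontinuous-envelope bookkeeping when passing from subsolution to supersolution and from one-sided to two-sided bounds.
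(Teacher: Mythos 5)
Your proposal is correct and is precisely the paper's own route: the paper omits the argument, referring to its elliptic analogue \cite[Proposition 2.19]{Feehan_maximumprinciple_v1}, whose proof is exactly your constant-barrier scheme --- subtract $M$, verify $L(u-M)=Lu-cM\leq 0$ and $(u-M)^*\leq 0$ on $\mydirac\!\sQ\less\bar\Sigma$, invoke Definition \ref{defn:Weak_maximum_principle_property_parabolic}, then deduce Items \eqref{item:Supersolution_Lu_geq_zero}--\eqref{item:Supersolution_Lu_arb_sign} by replacing $v$ with $-v\in\fK$ and Items \eqref{item:Solution_Lu_zero}--\eqref{item:Solution_Lu_arb_sign} by applying the one-sided bounds to both $\pm u\in\fK$, with the same semicontinuous-envelope bookkeeping. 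One shared hairline point worth noting (it is a feature of the proposition's hypothesis, not a divergence from the paper): with the paper's definition of a convex cone (closure under \emph{non-negative} combinations), ``$1\in\fK$'' literally yields only $u+\beta\in\fK$ for $\beta\geq 0$, not $u-M\in\fK$ for $M>0$, so your parenthetical justification of $w\in\fK$ tacitly reads the hypothesis as ``$\fK$ contains the constant functions'' (i.e. $\pm 1\in\fK$) --- the reading the paper itself requires and which holds in every cone appearing in Example \ref{exmp:Examples_weak maximum principle property_C2_W2d+1_parabolic}.
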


The proof of Proposition \ref{prop:Parabolic_weak_maximum_principle_apriori_estimates} is almost identical to the proof of \cite[Proposition 2.19]{Feehan_maximumprinciple_v1} and so is omitted.

The a priori estimate in Item \eqref{item:Solution_Lu_arb_sign} of Proposition \ref{prop:Parabolic_weak_maximum_principle_apriori_estimates} may be compared with its elliptic analogue \cite[Theorem 1.1.2]{Radkevich_2009a} (in the case of $C^2$ functions) and \cite[Theorem 1.5.1 and 1.5.5]{Radkevich_2009a} and \cite[Lemma 2.8]{Troianiello} (in the case of $H^1$ functions).

For a parabolic operator, $L$, the hypotheses on $c$ in Proposition \ref{prop:Parabolic_weak_maximum_principle_apriori_estimates} can usually be relaxed, as illustrated in the

\begin{lem}[Weak maximum principle estimates for functions in $C^2(\sQ)$ or $W^{2,d+1}_{\loc}(\sQ)$ when $c$ is bounded below]
\label{lem:Parabolic_weak_maximum_principle_apriori_estimates_c_bounded_below}
Let $\sQ\subset\RR^{d+1}$ be an open subset and $L$ in \eqref{eq:Generator_parabolic} have the weak maximum principle property on $\sQ\cup\Sigma$ in the sense of Definition \ref{defn:Weak_maximum_principle_property_parabolic}, for a convex cone $\fK\subset C^2(\sQ)$ (respectively, $W^{2,d+1}_{\loc}(\sQ)$) containing the constant function $1$ and open subset $\Sigma\subsetneqq\mydirac\!\sQ$. Require that $c$ and $\sQ$ obey \eqref{eq:c_bounded_below} and \eqref{eq:Domain_finite_upper_time}, respectively, for some constants $K_0>0$ and $T<\infty$. If $\fK$ is closed under multiplication by the function $e^{\lambda t}$, when $\lambda$ is a positive constant, then the estimates in Items \eqref{item:Subsolution_Lu_arb_sign}, \eqref{item:Supersolution_Lu_arb_sign}, and \eqref{item:Solution_Lu_arb_sign} in Proposition \ref{prop:Parabolic_weak_maximum_principle_apriori_estimates} hold with $1/c_0$ replaced by $e^{(K_0+1)(T-t)}$.
\end{lem}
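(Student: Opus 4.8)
The plan is to recover the positivity $c\geq c_0>0$ demanded by Items \eqref{item:Subsolution_Lu_arb_sign}, \eqref{item:Supersolution_Lu_arb_sign}, and \eqref{item:Solution_Lu_arb_sign} of Proposition~\ref{prop:Parabolic_weak_maximum_principle_apriori_estimates} by comparison against an exponential-in-time barrier. Set $\lambda:=K_0+1$. The only structural use of a positive zeroth-order coefficient in Proposition~\ref{prop:Parabolic_weak_maximum_principle_apriori_estimates} is to let a zeroth-order term dominate the source $Lu$; when $c$ is merely bounded below as in \eqref{eq:c_bounded_below}, the function $\Phi(t):=Ke^{\lambda(T-t)}$ (with $K\geq 0$ a constant) plays the role formerly played by the constant $K$ itself. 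The key computation is that, since $\Phi$ depends on $t$ alone,
\begin{equation*}
L\Phi = -\Phi_t + c\Phi = (c+\lambda)\Phi \quad\hbox{on }\sQ,
\end{equation*}
and, because \eqref{eq:c_bounded_below} gives $c+\lambda\geq 1$ (a.e.) while \eqref{eq:Domain_finite_upper_time} forces $t<T$ and hence $e^{\lambda(T-t)}\geq 1$, we obtain $L\Phi\geq\Phi\geq K$ on $\sQ$.

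Concretely, for Item \eqref{item:Subsolution_Lu_arb_sign} I would take $K:=0\vee\sup_\sQ Lu\vee\sup_{\mydirac\!\sQ\less\Sigma}u^*$ and $w:=u-\Phi$. Then $Lw=Lu-L\Phi\leq K-K=0$ on $\sQ$ by the displayed inequality, while on $\mydirac\!\sQ\less\bar\Sigma$ we have $\Phi\geq K\geq u^*$, so $w^*\leq 0$ there. The weak maximum principle property for $L$ (Definition~\ref{defn:Weak_maximum_principle_property_parabolic}) then yields $w\leq 0$, that is $u\leq Ke^{\lambda(T-t)}$, which is Item \eqref{item:Subsolution_Lu_arb_sign} with $1/c_0$ replaced by $e^{(K_0+1)(T-t)}$. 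The hypothesis that $\fK$ is closed under multiplication by $e^{\lambda t}$ is exactly what guarantees that $w$ again lies in $\fK$, playing here the role that ``$\fK$ contains the constant $1$'' plays in Proposition~\ref{prop:Parabolic_weak_maximum_principle_apriori_estimates}; equivalently, one may rescale by $v:=e^{\lambda t}u\in\fK$ and use the conjugation identity $e^{\lambda t}Lu=\hat Lv$, where $\hat L$ has the same $a,b$ but zeroth-order coefficient $c+\lambda\geq 1$, so that the problem is reduced to a genuine positive coefficient before one divides back by $e^{\lambda t}$. Items \eqref{item:Supersolution_Lu_arb_sign} and \eqref{item:Solution_Lu_arb_sign} then follow by applying this to $-v\in\fK$ and to $u\in\fK\cap-\fK$, respectively, exactly as the supersolution and solution estimates are deduced from the subsolution estimate in Proposition~\ref{prop:Parabolic_weak_maximum_principle_apriori_estimates}.

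The step I expect to require the most care is the bookkeeping that places the weight $e^{(K_0+1)(T-t)}$ on the source term alone, matching the stated estimates verbatim. The barrier $Ke^{\lambda(T-t)}$ above attaches the exponential to the whole of $K$, including the boundary supremum $\sup_{\mydirac\!\sQ\less\Sigma}u^*$, whereas the stated estimate leaves that term unweighted. To keep the boundary term unweighted one must either split off the Dirichlet data on $\mydirac_1\!\sQ$ by a separate, $t$-independent comparison before invoking the exponential barrier for the source, or track the $e^{\lambda t}$-weights through each supremum in the rescaled formulation and use $t'-t\leq T-t$ for the relevant times $t'$. Verifying that these refined comparison functions still belong to $\fK$—again the purpose of the closure hypothesis—and that the suprema transform correctly is the crux; once that is in hand, the three estimates follow as above.
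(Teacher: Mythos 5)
Your core mechanism is the same as the paper's: the paper defines $w := e^{\lambda t}u$, notes $w\in\fK$ by the closure hypothesis, computes $Lu = e^{-\lambda t}(Lw+\lambda w)$, chooses $\lambda = K_0+1$ so that $c+\lambda\geq 1$ by \eqref{eq:c_bounded_below}, applies Item \eqref{item:Subsolution_Lu_arb_sign} of Proposition \ref{prop:Parabolic_weak_maximum_principle_apriori_estimates} to $w$ and $L+\lambda$ with $c_0$ replaced by $1$, and divides back by $e^{\lambda t}$ --- this is precisely your ``equivalently, one may rescale by $v:=e^{\lambda t}u$'' clause, and that clause is the correct proof. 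Your primary presentation via the barrier $\Phi(t)=Ke^{\lambda(T-t)}$, however, contains a genuine gap at the membership step: Definition \ref{defn:Weak_maximum_principle_property_parabolic} applies only to elements of $\fK$, and $w=u-\Phi\in\fK$ does \emph{not} follow from the hypotheses. The set $\fK$ is a convex cone, not a vector space, so it is not closed under subtraction; containing the constant $1$ yields only \emph{non-negative} constants; and closure under multiplication by $e^{\lambda t}$ with $\lambda>0$ says nothing about $-\Phi = -Ke^{\lambda T}e^{-\lambda t}$, which is a \emph{negative} multiple of a \emph{decaying} exponential. So your assertion that the closure hypothesis ``is exactly what guarantees that $w$ again lies in $\fK$'' is false as stated; the conjugation route avoids the problem because the only membership it needs is $e^{\lambda t}u\in\fK$, which is exactly what the hypothesis provides (any constant-subtraction then occurs inside the already-established Proposition \ref{prop:Parabolic_weak_maximum_principle_apriori_estimates}).

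On what you call the crux --- keeping the boundary supremum unweighted --- your suspicion is justified, but the resolution is the opposite of what you propose: the unweighted form cannot be rescued, and neither of your two suggested fixes can work. Take $\sQ=(0,T)\times\sO$ with $\sO$ bounded, $Lu=-u_t-K_0u$ (so $a\equiv 0$, $b\equiv 0$, $c\equiv -K_0$), and $\Sigma=(0,T)\times\partial\sO$; then $L$ has the weak maximum principle property on $\sQ\cup\Sigma$ (for fixed $x$, integrate $\partial_t\left(e^{K_0t}u\right)=-e^{K_0t}Lu\geq 0$ up to $t=T$), yet $u(t,x)=e^{K_0(T-t)}$ satisfies $Lu=0$ and $\sup_{\mydirac\!\sQ\less\Sigma}u^*=1$ while $u>1$ in the interior. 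Hence any correct conclusion must carry the exponential weight on the boundary term as well, namely
\begin{equation*}
u \leq 0\vee e^{(K_0+1)(T-t)}\sup_\sQ Lu \vee e^{(K_0+1)(T-t)}\sup_{\mydirac\!\sQ\less\Sigma}u^* \quad\hbox{on }\sQ,
\end{equation*}
which is exactly what your barrier produces and what the paper's intermediate display $e^{\lambda t}u\leq 0\vee\sup_\sQ e^{\lambda t}Lu\vee\sup e^{\lambda t}u^*$ yields upon dividing by $e^{\lambda t}$ and using $t'\leq T$; the final reduction in the paper, which drops the weight from the boundary term, is a slip, and the lemma should be read with $e^{(K_0+1)(T-t)}$ multiplying the whole maximum. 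In particular, your first proposed fix (a separate $t$-independent comparison for the Dirichlet data) fails because $L(u-M)=Lu-cM\leq Lu+K_0M$ when $c\geq -K_0$, so the boundary constant $M$ re-enters the source term and gets weighted anyway, consistent with the counterexample. Once you adopt the conjugation formulation and the weighted conclusion, the supersolution and solution cases follow from $-v\in\fK$ and $u\in\fK\cap-\fK$ exactly as you say.
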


\begin{proof}
Consider the analogue of the estimate Proposition \ref{prop:Parabolic_weak_maximum_principle_apriori_estimates} \eqref{item:Subsolution_Lu_arb_sign}. Define $u(t,x) =: e^{-\lambda t}w(t,x)$, for a positive constant $\lambda$ to be determined, and note that $w \in \fK$ by hypothesis. Because
$$
Lu = e^{-\lambda t}\left(Lw + \lambda w\right),
$$
and $Lu\leq 0$ on $\sQ$, we see that $(L + \lambda)w\leq 0$ on $\sQ$. Choose $\lambda = K_0+1$, so we have $c+\lambda\geq 1$ on $\sQ$, and apply the estimate in Proposition \ref{prop:Parabolic_weak_maximum_principle_apriori_estimates} \eqref{item:Subsolution_Lu_arb_sign}, but with $c_0$ replaced by $1$, and $u$ replaced by $w$, and $L$ replaced by $L + \lambda$ to give
$$
w \leq 0\vee \sup_\sQ  (L + \lambda)w \vee \sup_{\mydirac\!\sQ\less\Sigma}w^* \quad\hbox{on } \sQ ,
$$
that is
$$
e^{(K_0+1)t}u \leq 0\vee \sup_\sQ  e^{(K_0+1)t}Lu \vee \sup_{\mydirac\!\sQ\less\Sigma}e^{(K_0+1)t}u^* \quad\hbox{on } \sQ,
$$
yielding the estimate in
\begin{equation}
\label{eq:Finite_time_c_bounded_below_max_principle_estimate}
u \leq 0\vee e^{(K_0+1)(T-t)}\sup_\sQ  Lu \vee \sup_{\mydirac\!\sQ\less\Sigma}u^*  \quad\hbox{on } \sQ.
\end{equation}
The conclusions for this and the remaining cases follow immediately.
\end{proof}

If an operator $L$ only has the weak maximum principle property (Definition \ref{defn:Weak_maximum_principle_property_parabolic}) for functions which are \emph{bounded above}, we can obtain an extension for functions which instead obey a \emph{growth condition}.

\begin{thm}[Weak maximum principle property for unbounded functions in $C^2(\sQ)$ or $W^{2,d+1}_{\loc}(\sQ)$]
\label{thm:Weak_maximum_principle_property_unbounded_functions}
Let $\sQ\subset\RR^{d+1}$ be a possibly unbounded open subset and $\varphi\in C^2(\sQ)$ obey $0<\varphi\leq 1$ on $\sO$. Let $L$ be an operator as in \eqref{eq:Generator_parabolic} and
\begin{equation}
\label{eq:First_order_operator}
Nv := -[L, \varphi](\varphi^{-1}v), \quad\forall\, v\in C^2(\sQ),
\end{equation}
and suppose that the differential operator,
\begin{equation}
\label{eq:Defn_hatL_operator}
\widehat L := (L+N)v, \quad\forall\, v\in C^2(\sQ),
\end{equation}
has the weak maximum principle property on $\sQ\cup\Sigma$ in the sense of Definition \ref{defn:Weak_maximum_principle_property_parabolic}, for a convex cone $\fK\subset C^2(\sQ)$ (respectively, $W^{2,d+1}_{\loc}(\sQ)$) and open subset $\Sigma\subseteqq\mydirac\!\sQ$, for functions $u\in\fK$ which are \emph{bounded above}, so $\sup_\sQ u<\infty$. Then $L$ has the weak maximum principle property on $\sQ\cup\Sigma$ for functions $u\in\fK$ which obey the growth condition,
\begin{equation}
\label{eq:Upper_bound_subsolution}
u \leq C\left(1+\varphi^{-1}\right) \quad\hbox{on }\sQ.
\end{equation}
\end{thm}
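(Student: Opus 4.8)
The plan is to use that $\widehat L$ is the conjugate $\varphi\,L\,\varphi^{-1}$ of $L$ by the positive weight $\varphi$, so that the growth bound \eqref{eq:Upper_bound_subsolution} on an $L$-subsolution $u$ becomes an \emph{upper} bound on $v:=\varphi u$, to which the assumed weak maximum principle property of $\widehat L$ for bounded-above functions applies.

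First I would record the identity underlying the construction. Writing the commutator as $[L,\varphi]w = L(\varphi w)-\varphi Lw$ and substituting $w=\varphi^{-1}v$, the definition \eqref{eq:First_order_operator} gives $Nv = -Lv + \varphi L(\varphi^{-1}v)$, so \eqref{eq:Defn_hatL_operator} collapses to the conjugation formula
$$
\widehat L v = (L+N)v = \varphi L(\varphi^{-1}v), \quad\forall\, v\in C^2(\sQ).
$$
A one-line computation of $[L,\varphi]w$ shows its top-order term is $-2\langle aD\varphi, Dw\rangle$, so $[L,\varphi]$ (and hence $N$) is genuinely first order; consequently $\widehat L$ is again an operator of the form \eqref{eq:Generator_parabolic} with the same principal part $-\tr(aD^2\cdot)$ and modified drift and potential, which is why the hypothesis on $\widehat L$ is meaningful. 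Since $\varphi\in C^2(\sQ)$ with $0<\varphi\le 1$, both $\varphi$ and $\varphi^{-1}$ are $C^2$ on $\sQ$, so multiplication by $\varphi$ preserves each of the cones arising in our applications (e.g.\ $C^2(\sQ)$, $W^{2,d+1}_{\loc}(\sQ)$, or $C^2_s(\underline\sQ)$, using $a\to 0$ on $\mydirac_0\!\sQ$ for the last); in particular $u\in\fK$ implies $\varphi u\in\fK$.

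Next, given $u\in\fK$ with $Lu\le 0$ (a.e.) on $\sQ$, with $u^*\le 0$ on $\mydirac\!\sQ\less\bar\Sigma$, and obeying \eqref{eq:Upper_bound_subsolution}, I would set $v:=\varphi u$ and verify the four hypotheses needed to invoke the weak maximum principle property of $\widehat L$: (i) $v\in\fK$, by the multiplier remark above; (ii) $v$ is bounded above, since \eqref{eq:Upper_bound_subsolution} and $0<\varphi\le 1$ give $v=\varphi u\le C(\varphi+1)\le 2C$, hence $\sup_\sQ v<\infty$; (iii) $\widehat L v=\varphi L(\varphi^{-1}v)=\varphi Lu\le 0$ (a.e.) on $\sQ$, as $\varphi>0$; and (iv) $v^*\le 0$ on $\mydirac\!\sQ\less\bar\Sigma$. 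For (iv) the point is that $0<\varphi\le 1$ forces the pointwise bound $\varphi u\le u^+$ on $\sQ$, so passing to upper semicontinuous envelopes and using the identity $(u^+)^*=(u^*)^+$ yields $v^*\le (u^*)^+\le 0$ there.

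With (i)--(iv) in hand, the assumed weak maximum principle property of $\widehat L$ on $\sQ\cup\Sigma$ for bounded-above functions in $\fK$ gives $v\le 0$ on $\sQ$, and dividing by $\varphi>0$ delivers $u\le 0$ on $\sQ$, which is precisely the asserted weak maximum principle property of $L$ for subsolutions obeying \eqref{eq:Upper_bound_subsolution}. I expect the only genuinely delicate step to be (iv): the boundary inequality must be transferred at the level of semicontinuous envelopes rather than pointwise, and it is exactly the normalization $0<\varphi\le 1$ (through $\varphi u\le u^+$ together with $(u^+)^*=(u^*)^+$) that makes this go through. The conjugation identity and the boundedness in (ii) are the conceptual core but are computationally routine.
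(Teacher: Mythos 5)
Your proposal is correct and follows essentially the same route as the paper's intended proof: the paper omits the argument, citing its elliptic analogue \cite[Theorem 2.20]{Feehan_maximumprinciple_v1}, which proceeds by exactly your conjugation identity $\widehat L v = \varphi L(\varphi^{-1}v)$ applied to $v := \varphi u$, together with the bound $v = \varphi u \leq C(\varphi+1) \leq 2C$ and the envelope inequality $v^* \leq (u^*)^+ \leq 0$ on $\mydirac\!\sQ\less\bar\Sigma$, after which the hypothesis on $\widehat L$ gives $v \leq 0$ and hence $u = \varphi^{-1}v \leq 0$. Your one flagged point --- that $u \in \fK$ should imply $\varphi u \in \fK$, verified cone-by-cone in the applications --- is the same tacit closure hypothesis carried by the paper's own statement (compare the explicit closure hypothesis in Lemma \ref{lem:Parabolic_weak_maximum_principle_apriori_estimates_c_bounded_below}), so it is not a deviation from the paper's argument.
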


The proof of Theorem \ref{thm:Weak_maximum_principle_property_unbounded_functions} is identical to that of \cite[Theorem 2.20]{Feehan_maximumprinciple_v1} and so is omitted.

\section{Applications of the weak maximum principle property to obstacle problems}
\label{sec:Application_weak_max_principle_property_obstacleproblems}
In this section, we consider the application of the weak maximum principle property to the development of a comparison principle for a supersolution and solution to an obstacle problem (Proposition \ref{prop:Comparison_principle_parabolic_obstacle_problem}) and a priori estimates for a supersolution and solution (Proposition \ref{prop:Parabolic_weak_max_principle_apriori_estimates_obstacle_problem}), when the obstacle problem is defined by a boundary-degenerate, linear, second-order, parabolic operator, $L$.

\begin{defn}[Solution and supersolution to an obstacle problem]
\label{defn:Solution_and_supersolution_obstacle_problem}
Let $\sQ\subset\RR^{d+1}$ be an open subset, $p \geq 1$, and $L$ be as in \eqref{eq:Generator_parabolic}. Given $f\in L^p_{\loc}(\sQ)$ and $\psi\in L^p_{\loc}(\sQ)$, we call $u\in W^{2,p}_{\loc}(\sQ)$ a \emph{solution} (respectively, \emph{supersolution}) to the obstacle problem \eqref{eq:Parabolic_obstacle_problem} if
$$
\min\{Lu-f,u-\psi\} = 0 \ (\geq 0) \quad \hbox{a.e. on }\sQ.
$$
Furthermore, given $g\in C(\mydirac\!\sQ\less\bar\Sigma)$ and $\psi$ also belonging to $C(\mydirac\!\sQ\less\bar\Sigma)$ and obeying the compatibility condition \eqref{eq:Parabolic_obstacle_boundarydata_compatibility}, that is, $\psi\leq g$ on $\mydirac\!\sQ\less\bar\Sigma$, we call $u$ a \emph{solution} to the obstacle problem with partial Dirichlet boundary condition if in addition $u$ belongs to $C(\mydirac\!\sQ\less\bar\Sigma)$ and is a \emph{solution} (respectively, \emph{supersolution}) to  \eqref{eq:Parabolic_boundary_condition}, so
$$
u = g \ (\geq g) \quad\hbox{on } \mydirac\!\sQ\less\bar\Sigma.
$$
\qed
\end{defn}

We first prove a comparison principle for suitably-defined supersolutions and solutions and uniqueness for solutions to the obstacle problem (Proposition \ref{prop:Comparison_principle_parabolic_obstacle_problem}) and then derive a priori maximum principle estimates for those supersolutions and solutions to obstacle problems (Proposition \ref{prop:Parabolic_weak_max_principle_apriori_estimates_obstacle_problem}).

We may compare Propositions \ref{prop:Comparison_principle_parabolic_obstacle_problem} and \ref{prop:Parabolic_weak_max_principle_apriori_estimates_obstacle_problem} with \cite[Theorems 4.5.1, 4.6.1, 4.6.6, and 4.7.4, and Corollary 4.5.2]{Rodrigues_1987} for the case of solutions and supersolutions to variational inequalities.

\begin{prop}[Comparison principle and uniqueness for $W^{2,d+1}_{\loc}$ solutions to the obstacle problem]
\label{prop:Comparison_principle_parabolic_obstacle_problem}
Let $\sQ\subset\RR^{d+1}$ be an open subset, $\fK\subset W^{2,d+1}_{\loc}(\sQ)$ be a convex cone
and $\Sigma\subseteqq\mydirac\!\sQ$ be an open subset. For every open subset $\sU\subset\sQ$, let $L$ in \eqref{eq:Generator_parabolic} have the weak maximum principle property on $\sU\cup\Sigma$ in the sense of Definition \ref{defn:Weak_maximum_principle_property_parabolic}
\footnote{Note that the weak maximum principle property hypothesis on $L$ here and in Proposition \ref{prop:Parabolic_weak_max_principle_apriori_estimates_obstacle_problem} is stronger than that in Propositions \ref{prop:Comparison_principle_parabolic_boundary_value_problem} and \ref{prop:Parabolic_weak_maximum_principle_apriori_estimates}.}.
Let $f\in L^{d+1}_{\loc}(\sQ)$ and $\psi\in L^{d+1}_{\loc}(\sQ)$.
Suppose $u\in \fK$ (respectively, $v\in -\fK$) is a solution (respectively, supersolution) to the obstacle problem,
$$
\min\{Lu-f, \ u-\psi\} = 0 \ (\geq  0) \quad\hbox{a.e. on } \sQ.
$$
If $v_*\geq u^*$ on $\mydirac\!\sQ\less\bar\Sigma$, then $v \geq u$ on $\sQ$; if $u, v$ are solutions and $v_* = u^*$ on $\mydirac\!\sQ\less\bar\Sigma$, then $u = v$ on $\sQ$.
\end{prop}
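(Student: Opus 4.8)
The plan is to argue by contradiction on the open set where the solution strictly exceeds the supersolution, and there to apply the weak maximum principle property to the difference $w := u - v$. First I would note that, by the parabolic Sobolev embedding recalled in \S\ref{subsec:Parabolic_sobolev_embedding}, both $u$ and $v$ are continuous on $\sQ$, so $w$ is continuous and $\sU := \{P \in \sQ : u(P) > v(P)\}$ is open. Since $\fK$ is a convex cone with $u \in \fK$ and $-v \in \fK$, we have $w = u + (-v) \in \fK$.

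Next I would establish that $Lw \leq 0$ a.e.\ on $\sU$. On $\sU$ we have $u > v$, while $v \geq \psi$ a.e.\ on $\sQ$ because $v$ is a supersolution; hence $u > \psi$ a.e.\ on $\sU$. Since $u$ solves $\min\{Lu - f, u - \psi\} = 0$ a.e., at a.e.\ point of $\sU$ the strict inequality $u - \psi > 0$ forces $Lu - f = 0$, that is, $Lu = f$ a.e.\ on $\sU$. Combined with $Lv \geq f$ a.e.\ (supersolution), this gives $Lw = Lu - Lv \leq 0$ a.e.\ on $\sU$.

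The main step is to verify $w^* \leq 0$ on $\mydirac\!\sU \less \bar\Sigma$, after which the weak maximum principle property on $\sU \cup \Sigma$ for $\fK$ yields $w \leq 0$ on $\sU$, contradicting $w > 0$ on $\sU$ unless $\sU = \emptyset$; that is, $u \leq v$ on $\sQ$, which is the first assertion. To check this boundary condition I would split $\mydirac\!\sU$ using $\partial\sU \subset \bar\sQ = \sQ \cup \partial\sQ$. If $P \in \mydirac\!\sU$ with $P \in \sQ$, then continuity of $w$ and $P \in \partial\{w > 0\}$ force $w(P) = 0$, so $w^*(P) = 0$. If instead $P \in \mydirac\!\sU$ with $P \in \partial\sQ$, then $Q_\eps(P) \cap \sU \neq \emptyset$ for all $\eps > 0$ together with $\sU \subset \sQ$ give $Q_\eps(P) \cap \sQ \neq \emptyset$, so $P \in \mydirac\!\sQ$; since $P \notin \bar\Sigma$, in fact $P \in \mydirac\!\sQ \less \bar\Sigma$. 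Using that the upper envelope over $\sU$ is dominated by that over $\sQ$, the subadditivity bound $(u - v)^* \leq u^* - v_*$, and the hypothesis $v_* \geq u^*$ on $\mydirac\!\sQ \less \bar\Sigma$, I obtain $w^*(P) \leq u^*(P) - v_*(P) \leq 0$.

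I expect the main obstacle to be exactly this boundary bookkeeping: correctly separating the points of $\mydirac\!\sU$ arising from the interior of $\sQ$ (where $w$ vanishes) from those on the genuine parabolic boundary $\mydirac\!\sQ$ (where the prescribed ordering of the boundary data is used), and reconciling the semicontinuous envelopes relative to $\sU$ and to $\sQ$. It is precisely here that the hypothesis, stronger than in Proposition \ref{prop:Comparison_principle_parabolic_boundary_value_problem}, that $L$ has the weak maximum principle property on $\sU \cup \Sigma$ for \emph{every} open $\sU \subset \sQ$ is needed, since $\sU$ is the coincidence-complement set rather than $\sQ$ itself. Finally, for uniqueness, when $u$ and $v$ are both solutions with $v_* = u^*$ on $\mydirac\!\sQ \less \bar\Sigma$, I would apply the comparison just established once to get $v \geq u$ and once with the roles of $u$ and $v$ interchanged (each solution being in particular a supersolution) to get $u \geq v$, whence $u = v$ on $\sQ$.
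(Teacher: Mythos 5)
Your proof is correct and follows essentially the same route as the paper's: you apply the weak maximum principle property to $w = u - v$ on the open set $\sU = \sQ\cap\{u>v\}$, and you split $\mydirac\!\sU\less\bar\Sigma$ into points interior to $\sQ$ (where $w$ vanishes by continuity) and points of $\mydirac\!\sQ\less\bar\Sigma$ (where the hypothesis $v_*\geq u^*$ applies), which is exactly the paper's decomposition of $\mydirac\!\sU$, with uniqueness obtained by the same role-reversal argument. Indeed, you supply details the paper leaves implicit, namely the verification that $Lu = f$ a.e.\ on $\sU$ (since $u > v \geq \psi$ a.e.\ there forces the obstacle constraint to be slack) and the envelope bookkeeping $(u-v)^* \leq u^* - v_*$ relating the semicontinuous envelopes over $\sU$ and over $\sQ$.
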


\begin{proof}
Suppose $\sU := \sQ\cap\{u>v\}$ is non-empty. Observe that by Definition \ref{defn:Parabolic_boundary} of the parabolic boundary, we have
$$
\mydirac!\sU = \left(\sQ\cap\mydirac\{u>v\}\right)\cup\left(\{u>v\}\cap \mydirac\!\sQ\right)\cup\left(\mydirac\{u>v\}\cap \mydirac\!\sQ\right),
$$
and we see that
$$
\mydirac\!\sU\less\bar\Sigma = \left(\sQ\cap\mydirac\{u>v\}\less\bar\Sigma\right)\cup\left(\{u>v\}\cap \mydirac\!\sQ\less\bar\Sigma\right)\cup\left(\mydirac\{u>v\}\less\bar\Sigma\cap \mydirac\!\sQ\less\bar\Sigma\right).
$$
Because $u\leq v$ on $\mydirac\!\sQ\less\bar\Sigma$ (non-empty by hypothesis) and $u=v$ on $\partial\sU$ (topological boundary), so $u=v$ on $(\sQ\cup\mydirac\!\sQ\less\bar\Sigma)\cap\mydirac\{u>v\}\less\bar\Sigma$, we must have
$$
u-v \leq 0 \quad\hbox{on } \mydirac\!\sU\less\bar\Sigma.
$$
We have $u-v\in \fK$ by hypothesis. Moreover, $L(u-v)\leq 0$ a.e on $\sU$, so $u-v\leq 0$ on $\sU$ since $L$ has the weak maximum principle property on $\sU\cup(\Sigma\cap \mydirac\!\sU)$ for $\fK\cap W^{2,d+1}_{\loc}(\sU)$ in the sense of Definition \ref{defn:Weak_maximum_principle_property_parabolic}, contradicting our assertion that $\sU$ is non-empty. Hence, $u\leq v$ on $ \sQ$.

If both $u$ and $v$ are solutions to the obstacle problem then, since any solution is also a supersolution by Definition \ref{defn:Solution_and_supersolution_obstacle_problem}, we may reverse the roles of $u$ and $v$ in the preceding argument to give $v\leq u$ on $\sQ$ and thus $u=v$ on $\sQ$.
\end{proof}

We then have the

\begin{prop}[Weak maximum principle and a priori estimates for supersolutions and solutions to obstacle problems]
\label{prop:Parabolic_weak_max_principle_apriori_estimates_obstacle_problem}
Let $\sQ\subset\RR^{d+1}$ be an open subset, $\fK\subset W^{2,d+1}_{\loc}(\sQ)$ be a convex cone containing the constant function $1$, and $\Sigma\subsetneqq\mydirac\!\sQ$ be an open subset. For every open subset $\sU\subset\sO$, let $L$ in \eqref{eq:Generator_parabolic} have the weak maximum principle property on $\sU\cup\Sigma$ in the sense of
Proposition \ref{prop:Comparison_principle_parabolic_obstacle_problem}. Assume that $c \geq 0$ a.e. on $\sQ$. Let $f\in L^{d+1}_{\loc}(\sQ)$, and $g\in C(\mydirac\!\sQ\less\bar\Sigma)$, and $\psi\in C(\sQ\cup\mydirac\!\sQ\less\bar\Sigma)$ with $\psi\leq g$ on $\mydirac\!\sQ\less\bar\Sigma$.
Suppose $u\in \fK\cap -\fK$ is a solution and $v\in -\fK$ is a supersolution to the obstacle problem in the sense of Definition \ref{defn:Solution_and_supersolution_obstacle_problem} for $f$ and $g$ and $\psi$.
\begin{enumerate}
\item \label{item:Obstacle_supersolution_f_geq_zero} If $f\geq 0$ a.e. on $\sQ$, then
$$
v\geq 0 \wedge \inf_{\mydirac\!\sQ\less\Sigma}g \quad\hbox{on }\sQ.
$$
\item\label{item:Obstacle_supersolution_f_arb_sign} If there is a constant $c_0>0$ such that $c\geq c_0$ a.e. on $\sQ$, then
$$
v\geq 0 \wedge \frac{1}{c_0}\essinf_\sQ f \wedge \inf_{\mydirac\!\sQ\less\Sigma}g \quad\hbox{on }\sQ.
$$
\item \label{item:Obstacle_solution_f_leq_zero} If $f\leq 0$ a.e on $\sQ$, then
$$
u\leq 0 \vee \sup_{\mydirac\!\sQ\less\Sigma}g \vee \sup_\sQ\psi \quad\hbox{on }\sQ.
$$
\item\label{item:Obstacle_solution_f_arb_sign} If $c\geq c_0$ a.e. on $\sQ$, then
$$
u\leq 0 \vee \frac{1}{c_0}\esssup_\sQ f \vee \sup_{\mydirac\!\sQ\less\Sigma}g \vee \sup_\sQ\psi \quad\hbox{on }\sQ.
$$
\item\label{item:Obstacle_comparison} If $u_1$ and $u_2$ are solutions, respectively, for $f_1\geq f_2$ a.e. on $\sQ$ and $\psi_1\geq \psi_2$ on $\sQ$, and $g_1\geq g_2$ on $\mydirac\!\sQ\less\bar\Sigma$, then
$$
u_1\geq u_2 \quad\hbox{on }\sQ.
$$
\item\label{item:Obstacle_stability} If $u_i$ is a solution for $f_i,\psi_i$ on $\sQ$ and $g_i$ on $\mydirac\!\sQ\less\bar\Sigma$ with $\psi_i\leq g_i$ on $\mydirac\!\sQ\less\bar\Sigma$ for $i=1,2$, and $c\geq c_0$ a.e. on $\sQ$, then
$$
|u_1-u_2| \leq \frac{1}{c_0}\|f_1-f_2\|_{L^\infty(\sQ)} \vee \|g_1-g_2\|_{C(\overline{\mydirac\!\sQ\less\Sigma})} \vee \|\psi_1-\psi_2\|_{C(\bar \sQ)} \quad\hbox{on }\sQ,
$$
and if $f_1=f_2$ and $c\geq 0$ a.e. on $\sQ$, then
$$
|u_1-u_2| \leq \|g_1-g_2\|_{C(\overline{\mydirac\!\sQ\less\Sigma})} \vee \|\psi_1-\psi_2\|_{C(\bar \sQ)}  \quad\hbox{on }\sQ.
$$
\end{enumerate}
\end{prop}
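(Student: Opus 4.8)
The plan is to derive all six conclusions from the obstacle comparison principle (Proposition~\ref{prop:Comparison_principle_parabolic_obstacle_problem}) and the boundary value a priori estimates (Proposition~\ref{prop:Parabolic_weak_maximum_principle_apriori_estimates}), exploiting two structural facts: the supersolution $v$ obeys $Lv \geq f$ a.e.\ on all of $\sQ$ together with $v \geq g$ on $\mydirac\!\sQ\less\bar\Sigma$ (so the obstacle constraint is irrelevant for lower bounds on $v$), while the solution $u$ obeys $Lu = f$ a.e.\ on the noncoincidence set $\{u>\psi\}$ and $u \geq \psi$ everywhere, which is exactly the mechanism already used inside the proof of Proposition~\ref{prop:Comparison_principle_parabolic_obstacle_problem}. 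Every comparison function I use is a constant or a constant shift of a solution, and since $L(\text{const}) = c\cdot(\text{const})$, the only cone-theoretic input is that $\fK$ (hence $-\fK$) is closed under addition of real constants; this holds for the cones in our applications, e.g.\ $\{w\in C^2_s(\underline\sQ):\sup_\sQ w<\infty\}$, because constants satisfy the second-order boundary condition \eqref{eq:Ventcel}.

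For the lower bounds on $v$, Items~\eqref{item:Obstacle_supersolution_f_geq_zero} and \eqref{item:Obstacle_supersolution_f_arb_sign}, I would note $-v\in\fK$ and that the stronger hypothesis here yields the weak maximum principle property on $\sQ\cup\Sigma$ (take $\sU=\sQ$), so Proposition~\ref{prop:Parabolic_weak_maximum_principle_apriori_estimates} applies. When $f\geq 0$ the supersolution property gives $Lv\geq 0$, so Proposition~\ref{prop:Parabolic_weak_maximum_principle_apriori_estimates}\eqref{item:Supersolution_Lu_geq_zero} gives $v\geq 0\wedge\inf_{\mydirac\!\sQ\less\Sigma}v_*$, and $v\geq g$ (with $v$ continuous on $\mydirac\!\sQ\less\bar\Sigma$) forces $v_*\geq g$ there, whence $v\geq 0\wedge\inf_{\mydirac\!\sQ\less\Sigma}g$; when $c\geq c_0$, Proposition~\ref{prop:Parabolic_weak_maximum_principle_apriori_estimates}\eqref{item:Supersolution_Lu_arb_sign} together with $\essinf_\sQ Lv\geq\essinf_\sQ f$ gives the stated bound.

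For the upper bounds on $u$, Items~\eqref{item:Obstacle_solution_f_leq_zero} and \eqref{item:Obstacle_solution_f_arb_sign}, I would exhibit the asserted constant as a supersolution and invoke Proposition~\ref{prop:Comparison_principle_parabolic_obstacle_problem}. Taking $M:=0\vee\sup_{\mydirac\!\sQ\less\Sigma}g\vee\sup_\sQ\psi$ in Item~\eqref{item:Obstacle_solution_f_leq_zero}, the constant $M\geq 0$ obeys $M-\psi\geq 0$ and $LM-f=cM-f\geq 0$ (using $c\geq 0$, $f\leq 0$), so $\min\{LM-f,\,M-\psi\}\geq 0$; since $M\geq g$ on $\mydirac\!\sQ\less\bar\Sigma$, the constant supersolution $M$ dominates $u$ on the boundary and Proposition~\ref{prop:Comparison_principle_parabolic_obstacle_problem} yields $u\leq M$. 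Item~\eqref{item:Obstacle_solution_f_arb_sign} is identical with $M:=0\vee\frac{1}{c_0}\esssup_\sQ f\vee\sup_{\mydirac\!\sQ\less\Sigma}g\vee\sup_\sQ\psi$, where now $c\geq c_0>0$ gives $cM-f\geq c_0 M-f\geq\esssup_\sQ f-f\geq 0$ a.e.

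The comparison and stability estimates, Items~\eqref{item:Obstacle_comparison} and \eqref{item:Obstacle_stability}, follow from the same device applied to shifts. For Item~\eqref{item:Obstacle_comparison}, the orderings $f_1\geq f_2$ and $\psi_1\geq\psi_2$ show $u_1$ is a supersolution to the problem defined by $(f_2,\psi_2)$, while $g_1\geq g_2$ gives $u_1\geq u_2$ on $\mydirac\!\sQ\less\bar\Sigma$, so Proposition~\ref{prop:Comparison_principle_parabolic_obstacle_problem} gives $u_1\geq u_2$. For Item~\eqref{item:Obstacle_stability}, with $M$ the asserted right-hand side, I would check $u_2+M$ is a supersolution to the $(f_1,\psi_1,g_1)$-problem: indeed $(u_2+M)-\psi_1=(u_2-\psi_2)+(M-(\psi_1-\psi_2))\geq M-\|\psi_1-\psi_2\|_{C(\bar\sQ)}\geq 0$, while $L(u_2+M)-f_1=(Lu_2-f_2)+(cM-(f_1-f_2))\geq c_0M-\|f_1-f_2\|_{L^\infty(\sQ)}\geq 0$ when $c\geq c_0$ (respectively $=cM\geq 0$ when $f_1=f_2$, $c\geq 0$), and $u_2+M\geq g_1$ on the boundary; comparison gives $u_1\leq u_2+M$, and interchanging $1\leftrightarrow 2$ (using that $M$ is symmetric in the data) gives $u_2\leq u_1+M$, hence $|u_1-u_2|\leq M$. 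I expect the genuine content to be light and the main obstacle to be bookkeeping: verifying that each comparison function lies in the correct cone, in particular that $M\in-\fK$ and $u_2+M\in-\fK$ via closure of $\fK$ under constant shifts, and tracking the boundary infima and suprema through the semicontinuous envelopes $u^*,v_*$ on $\mydirac\!\sQ\less\bar\Sigma$ versus $\mydirac\!\sQ\less\Sigma$.
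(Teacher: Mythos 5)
Your proposal is correct and follows essentially the same route as the paper's (omitted) proof, which defers to the elliptic analogue \cite[Proposition 3.5]{Feehan_maximumprinciple_v1} with Proposition \ref{prop:Comparison_principle_parabolic_obstacle_problem} in place of the elliptic comparison principle: the supersolution lower bounds come from $Lv\geq f$ and Proposition \ref{prop:Parabolic_weak_maximum_principle_apriori_estimates}, and the upper, comparison, and stability bounds come from verifying that constants and constant shifts $u_2+M$ are obstacle-problem supersolutions and invoking the comparison principle. Your caveat about $\fK$ being closed under addition of real constants (rather than merely containing $1$) is the same implicit convention the paper itself relies on in Proposition \ref{prop:Parabolic_weak_maximum_principle_apriori_estimates}, and is satisfied by the cones arising in the applications, so it is not a gap relative to the paper.
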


The proof of Proposition \ref{prop:Parabolic_weak_max_principle_apriori_estimates_obstacle_problem} is almost identical to that of its elliptic analogue, \cite[Proposition 3.5]{Feehan_maximumprinciple_v1}, except that the role of the comparison principle \cite[Proposition 3.3]{Feehan_maximumprinciple_v1} is replaced by that of Proposition \ref{prop:Comparison_principle_parabolic_obstacle_problem} and so we omit the proof.

We have the following analogue of Lemma \ref{lem:Parabolic_weak_maximum_principle_apriori_estimates_c_bounded_below}.

\begin{lem}[Weak maximum principle and a priori estimates for supersolutions and solutions to obstacle problems when $c$ is bounded below]
\label{lem:Parabolic_weak_max_principle_apriori_estimates_c_bounded_below_obstacle_problem}
Let $\sQ\subset\RR^{d+1}$ be an open subset and $L$ in \eqref{eq:Generator_parabolic} have the weak maximum principle property on $\sQ\cup\Sigma$ in the sense of Proposition \ref{prop:Comparison_principle_parabolic_obstacle_problem}, for a convex cone $\fK\subset W^{2,d+1}_{\loc}(\sQ)$ containing the constant function $1$ and open subset $\Sigma\subsetneqq\mydirac\!\sQ$. Require that $c$ and $\sQ$ obey \eqref{eq:c_bounded_below} and \eqref{eq:Domain_finite_upper_time}, respectively, for some constants $K_0>0$ and $T<\infty$. If $\fK$ is closed under multiplication by the function $e^{\lambda t}$, when $\lambda$ is a positive constant, then the estimates in Items \eqref{item:Obstacle_supersolution_f_arb_sign}, \eqref{item:Obstacle_solution_f_arb_sign}, and \eqref{item:Obstacle_stability} in Proposition \ref{prop:Parabolic_weak_max_principle_apriori_estimates_obstacle_problem} hold with $1/c_0$ replaced by $e^{(K_0+1)(T-t)}$.
\end{lem}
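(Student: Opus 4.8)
The plan is to mimic the proof of Lemma~\ref{lem:Parabolic_weak_maximum_principle_apriori_estimates_c_bounded_below}, trading the hypothesis $c\geq -K_0$ of \eqref{eq:c_bounded_below} for the stronger condition $c_0=1$ required by Proposition~\ref{prop:Parabolic_weak_max_principle_apriori_estimates_obstacle_problem} by means of the multiplicative substitution $u(t,x)=:e^{-\lambda t}w(t,x)$, with the constant $\lambda := K_0+1$. Since $\fK$ is closed under multiplication by $e^{\lambda t}$, one has $w\in\fK$ whenever $u\in\fK$, and $w\in-\fK$ (respectively $w\in\fK\cap-\fK$) whenever $u$ is required to lie there. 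The key computation, exactly as in the earlier lemma, is the identity $Lu = e^{-\lambda t}\left((L+\lambda)w\right)$ on $\sQ$, where $(L+\lambda)w := Lw+\lambda w$; the zeroth-order coefficient of $L+\lambda$ is then $c+\lambda\geq 1$ on $\sQ$.

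First I would record how the obstacle problem transforms. Because $e^{\lambda t}>0$ and $\min\{\alpha p,\alpha q\}=\alpha\min\{p,q\}$ for $\alpha>0$, multiplying $\min\{Lu-f,\,u-\psi\}=0$ (respectively $\geq 0$) through by $e^{\lambda t}$ yields
$$
\min\left\{(L+\lambda)w - e^{\lambda t}f,\ w - e^{\lambda t}\psi\right\} = 0 \ (\geq 0) \quad\hbox{a.e. on }\sQ,
$$
so that $w$ is a solution (respectively supersolution), in the sense of Definition~\ref{defn:Solution_and_supersolution_obstacle_problem}, to the obstacle problem \eqref{eq:Parabolic_obstacle_problem} for the operator $L+\lambda$, with source $e^{\lambda t}f$, obstacle $e^{\lambda t}\psi$, and boundary data $e^{\lambda t}g$ on $\mydirac\!\sQ\less\bar\Sigma$; the compatibility condition $e^{\lambda t}\psi\leq e^{\lambda t}g$ is preserved. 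I would also verify that $L+\lambda$ retains the weak maximum principle property on $\sU\cup\Sigma$ for every open $\sU\subset\sQ$ (in the sense of Proposition~\ref{prop:Comparison_principle_parabolic_obstacle_problem}): if $(L+\lambda)z\leq 0$ with $z^*\leq 0$ on $\mydirac\!\sU\less\bar\Sigma$, then $e^{-\lambda t}z$ is $L$-subharmonic with the same boundary sign, so the hypothesis on $L$ applies. This is the structural check that lets me invoke the Proposition for $w$.

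With these reductions in place, I would apply Proposition~\ref{prop:Parabolic_weak_max_principle_apriori_estimates_obstacle_problem}, Items \eqref{item:Obstacle_supersolution_f_arb_sign}, \eqref{item:Obstacle_solution_f_arb_sign}, and \eqref{item:Obstacle_stability}, to $w$, with $L$ replaced by $L+\lambda$, with $c_0=1$, and with $f,\psi,g$ replaced by $e^{\lambda t}f,\,e^{\lambda t}\psi,\,e^{\lambda t}g$. For instance, Item \eqref{item:Obstacle_solution_f_arb_sign} gives
$$
e^{\lambda t}u \leq 0 \vee \esssup_\sQ\left(e^{\lambda t}f\right) \vee \sup_{\mydirac\!\sQ\less\Sigma}\left(e^{\lambda t}g\right) \vee \sup_\sQ\left(e^{\lambda t}\psi\right) \quad\hbox{on }\sQ.
$$
Multiplying by $e^{-\lambda t}$ and invoking $\sQ\subset(-\infty,T)\times\RR^d$ from \eqref{eq:Domain_finite_upper_time} to bound the exponential weight on the source term exactly as in the display \eqref{eq:Finite_time_c_bounded_below_max_principle_estimate}, I obtain the asserted estimate with $1/c_0$ replaced by $e^{(K_0+1)(T-t)}$ in front of $f$. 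The arguments for Item \eqref{item:Obstacle_supersolution_f_arb_sign} (using $w\in-\fK$ together with the $\essinf$/$\inf$ versions) and Item \eqref{item:Obstacle_stability} (applied to $u_1-u_2$) are identical.

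I expect the main obstacle to be purely the bookkeeping of the exponential weights upon reverting the substitution: one must check that multiplication by the positive factor $e^{\lambda t}$ commutes with the pointwise $\min$ defining \eqref{eq:Parabolic_obstacle_problem} and with the $\esssup$, $\essinf$, and boundary extrema, and that after dividing by $e^{\lambda t}$ and using $t\leq T$ the weight $e^{(K_0+1)(T-t)}$ attaches to the source term $f$ while the data terms $g$ and $\psi$ reduce to the stated form. Everything else is formal and follows the template of Lemma~\ref{lem:Parabolic_weak_maximum_principle_apriori_estimates_c_bounded_below}; no analytic input beyond Proposition~\ref{prop:Parabolic_weak_max_principle_apriori_estimates_obstacle_problem} is required.
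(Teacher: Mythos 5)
Your route is exactly the paper's intended one: the lemma is stated as the analogue of Lemma \ref{lem:Parabolic_weak_maximum_principle_apriori_estimates_c_bounded_below}, and its (omitted) proof is precisely your substitution $u = e^{-\lambda t}w$ with $\lambda = K_0+1$, the observation that multiplication by $e^{\lambda t}>0$ commutes with the pointwise minimum so that $w$ is a solution (respectively, supersolution) of the obstacle problem for $L+\lambda$ with data $e^{\lambda t}f$, $e^{\lambda t}g$, $e^{\lambda t}\psi$, and then an application of Proposition \ref{prop:Parabolic_weak_max_principle_apriori_estimates_obstacle_problem} with $c_0=1$. Your cone bookkeeping is right, and your structural check that $L+\lambda$ inherits the weak maximum principle property on each $\sU\cup\Sigma$ is the correct thing to record --- with one small caveat: that check applies the property of $L$ to $e^{-\lambda t}z$, which needs $\fK$ to be closed under multiplication by $e^{\mu t}$ with $\mu<0$ as well, whereas the hypothesis as stated grants only positive exponents. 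This ellipsis is shared with the paper's own proof of Lemma \ref{lem:Parabolic_weak_maximum_principle_apriori_estimates_c_bounded_below} (which likewise replaces $L$ by $L+\lambda$ in a proposition whose hypothesis is the maximum principle property) and is harmless for the cones actually used on domains obeying \eqref{eq:Domain_finite_upper_time}, but it should be stated.

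The genuine gap is your final reversion step. From $e^{\lambda t}u \leq 0 \vee \esssup_\sQ(e^{\lambda t}f) \vee \sup_{\mydirac\!\sQ\less\Sigma}(e^{\lambda t}g) \vee \sup_\sQ(e^{\lambda t}\psi)$, dividing by $e^{\lambda t}$ leaves each term carrying a factor $e^{\lambda(s-t)}$, where $s$ is the time at which the relevant supremum is approached; since $s$ ranges up to $T$ (in particular the terminal portion of $\mydirac\!\sQ\less\bar\Sigma$ has $s=T\geq t$), the only available bound $e^{\lambda s}\leq e^{\lambda T}$ puts the weight $e^{(K_0+1)(T-t)}\geq 1$ on \emph{every} term, on $g$ and $\psi$ just as on $f$. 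Your claim that ``the data terms $g$ and $\psi$ reduce to the stated form'' does not follow, and indeed cannot: already in the spatially constant caricature $Lu=-u_t+cu$ with $c\equiv -K_0$, $f=0$, and $\psi$ very negative, the function $u(t)=Be^{K_0(T-t)}$ with $B>0$ solves the obstacle problem with terminal datum $g=B$, yet $u>B$ for $t<T$, so the datum must be amplified and no bookkeeping removes the weight from $g$. What the substitution argument honestly yields is
\begin{equation*}
u \leq 0 \vee e^{(K_0+1)(T-t)}\left(\esssup_\sQ f \vee \sup_{\mydirac\!\sQ\less\Sigma}g \vee \sup_\sQ\psi\right) \quad\hbox{on }\sQ,
\end{equation*}
and similarly for Items \eqref{item:Obstacle_supersolution_f_arb_sign} and \eqref{item:Obstacle_stability}. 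To be fair, the paper's display \eqref{eq:Finite_time_c_bounded_below_max_principle_estimate} commits the identical slip (unweighted boundary term after dividing by $e^{\lambda t}$), so your proof faithfully reproduces the paper's reading; but as a derivation of the literal statement the last step is a gap, and the repair is to prove and assert the weighted form above.
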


\section{Weak maximum principle for $L$-subharmonic functions in $C^2_s$}
\label{sec:Weak_maximum_principle_C2s_subharmonic_functions}
Having considered applications of the weak maximum principle property (Definition \ref{defn:Weak_maximum_principle_property_parabolic}) to Dirichlet boundary value problems in \S \ref{sec:Application_weak_maximum_principle_property_boundary_value_problems} and obstacle problems in \S \ref{sec:Application_weak_max_principle_property_obstacleproblems}, we now establish conditions under which the operator $L$ in \eqref{eq:Generator_parabolic} has the weak maximum principle property for $C^2_s$ functions on $\underline\sQ$. In \S \ref{subsec:Weak_max_principle_bounded_C2s_function_bounded_domain}, we establish a weak maximum principle for bounded $C^2_s$ functions on bounded domains (Theorem \ref{thm:Weak_maximum_principle_C2s_bounded_domain}), while in \S \ref{subsec:WeakMaxPrincipleBVProbSmoothUnboundedDomain}, we extend this result to the case of bounded $C^2_s$ functions on unbounded domains (Theorem \ref{thm:Weak_maximum_principle_C2s_unbounded_domain}).

Our weak maximum principle (Theorems \ref{thm:Weak_maximum_principle_C2s_bounded_domain} and \ref{thm:Weak_maximum_principle_C2s_unbounded_domain}) differs in several aspects from \cite[Theorem 1.1.2]{Radkevich_2009a}, some of which may appear subtle at first glance but which are nonetheless important for applications:
\begin{enumerate}
\item The function $u$ is \emph{not} required to be belong to $C^2(\underline\sQ)\cap C(\bar\sQ)$, but rather $C^2_s(\underline\sQ)\cap C(\sQ) $ and obey $\sup_\sQ u<\infty$;
\item The open subset $\sQ\subset\RR^{d+1}$ is allowed to be \emph{unbounded}; and
\item The coefficients of the partial differential operator $L$ in \eqref{eq:Generator_parabolic} are allowed to be \emph{unbounded}.
\end{enumerate}
The significance of these points is illustrated further by the example of the Heston operator discussed in \S \ref{subsec:Heston} and Appendix \ref{sec:Fichera_and_heston}.

\subsection{Weak maximum principle on bounded domains for bounded functions in $C^2_s$}
\label{subsec:Weak_max_principle_bounded_C2s_function_bounded_domain}
We begin with the case of bounded domains and adapt the proofs of \cite[Theorem 8.1.2 and Corollary 8.1.3]{Krylov_LecturesHolder}, \cite[Lemma 2.3 and Theorem 2.4]{Lieberman}; see also \cite[Theorem I.3.1]{DaskalHamilton1998}, \cite[Lemma 3.4]{Feehan_Pop_mimickingdegen_pde}. Note that while the hypotheses (and proof) of the classical weak maximum principle given by L. C. Evans in \cite[Theorems 7.1.8 and 7.1.9]{Evans} and Lieberman in \cite[Lemmas 2.1 and 2.2]{Lieberman} require a strong non-parabolic boundary regularity condition, $u\in C^2([0,T)\times\sO)\cap C([0,T]\times\bar\sO)$ (keeping in mind our convention of considering terminal rather than initial value problems, unlike in \cite{Evans} or \cite{Lieberman}), the hypotheses (and proof) of the classical weak maximum principle given by Krylov in \cite[Theorem 8.1.2 and Corollary 8.1.3]{Krylov_LecturesHolder} or Lieberman in \cite[Lemma 2.3 and Theorem 2.4]{Lieberman} only require that $u\in C^2((0,T)\times\sO)\cap C([0,T]\times\bar\sO)$.

Because of our sign convention, Lieberman's condition \cite[Equation (2.4a)]{Lieberman} is equivalent to our $c\leq -K_0$ on $\sQ$, while in this article we consider a special case of Lieberman's oblique boundary condition, $Mu\geq 0$ on $\mydirac_1\!\sQ$, where $M$ is given by \cite[Equation (2.3)]{Lieberman},
$$
Mu := \langle\beta,\partial u\rangle + \beta^0u,
$$
and $\partial u := (u_t,Du)$ denotes the full space-time gradient, $\beta:\mydirac_1\!\sQ\to\RR^{d+1}$ is a vector field which points into $\sQ$ (in the sense of \cite[p. 8]{Lieberman}), and $\beta^0:\mydirac_1\!\sQ\to\RR$ is a scalar function. Thus, a boundary condition, $u\leq 0$ on $\mydirac_1\!\sQ$, corresponds to taking $\beta\equiv 0$ and choosing $\beta^0=-1$ in \cite[Equation (2.4b)]{Lieberman}.

\begin{thm}[Weak maximum principle on bounded domains for bounded functions in $C^2_s$]
\label{thm:Weak_maximum_principle_C2s_bounded_domain}
Let $\sQ\subset\RR^{d+1}$ be a \emph{bounded} open subset and assume that the coefficients of $L$ obey \eqref{eq:b_perp_nonnegative_boundary_parabolic}, \eqref{eq:c_bounded_below}, and \eqref{eq:c_bounded_below_boundary}. Suppose that $u\in C^2_s(\underline\sQ)$ obeys $\sup_\sQ u<\infty$. If $Lu\leq 0$ on $\sQ$ and $u^*\leq 0$ on $\mydirac_1\!\sQ$, then $u\leq 0$ on $\sQ$. In particular, $L$ has the weak maximum principle property on $\underline\sQ$ in the sense of Definition \ref{defn:Weak_maximum_principle_property_parabolic}, when $\Sigma = \mydirac_0\!\sQ$ and $\fK$ is the set of $u\in C^2_s(\underline\sQ)$ such that $\sup_\sQ u < \infty$.
\end{thm}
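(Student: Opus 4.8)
My plan is to adapt the classical parabolic weak maximum principle (Krylov \cite[Theorem 8.1.2]{Krylov_LecturesHolder}, Lieberman \cite[Lemma 2.3 and Theorem 2.4]{Lieberman}) so that the degenerate portion $\mydirac_0\!\sQ$ is treated exactly like an interior point, leaving $\mydirac_1\!\sQ$ as the only boundary used for comparison; the concluding ``weak maximum principle property'' assertion is then just the restatement of the main inequality in the language of Definition \ref{defn:Weak_maximum_principle_property_parabolic} with $\Sigma = \mydirac_0\!\sQ$, so it suffices to show that $Lu\leq 0$ on $\sQ$, $u^*\leq 0$ on $\mydirac_1\!\sQ$, $u\in C^2_s(\underline\sQ)$ and $\sup_\sQ u<\infty$ imply $u\leq 0$. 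First I would reduce to $c\geq 0$. Since $\sQ$ is bounded, $t_{\min}:=\inf\{t:(t,x)\in\sQ\}$ is finite; writing $u=e^{-K_0 t}w$ with $K_0$ as in \eqref{eq:c_bounded_below} gives $Lu=e^{-K_0 t}(L+K_0)w$, so $Lu\leq 0\iff \widehat L w\leq 0$ where $\widehat L:=L+K_0$ has zeroth-order coefficient $c+K_0\geq 0$ on $\sQ$ and, by \eqref{eq:c_bounded_below_boundary}, on $\mydirac_0\!\sQ$. As $e^{-K_0 t}$ is smooth, positive and bounded above and below on $\bar\sQ$, this substitution preserves membership in $C^2_s(\underline\sQ)$, the second-order boundary condition \eqref{eq:Ventcel}, the finiteness of the supremum, the drift $b$ (hence \eqref{eq:b_perp_nonnegative_boundary_parabolic}), and the sign of $u^*$ on $\mydirac_1\!\sQ$. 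I therefore assume henceforth $c\geq 0$ on $\underline\sQ$.

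Next I would pass to a strict subsolution. For $\eps>0$ set $\phi:=(t-t_{\min})^{-1}>0$ and $u_\eps:=u-\eps\phi$. Because $D\phi=0$ and $\tr(aD^2\phi)\equiv 0$, one computes $L\phi=(t-t_{\min})^{-2}+c\phi\geq (t-t_{\min})^{-2}>0$ on $\underline\sQ$ (in the extended, first-order sense along $\mydirac_0\!\sQ$, where the second-order term drops out), whence $u_\eps\in C^2_s(\underline\sQ)$ and $Lu_\eps<0$ on $\underline\sQ$. Moreover $\sup_\sQ u_\eps<\infty$, $u_\eps^*\leq u^*\leq 0$ on $\mydirac_1\!\sQ$ (as $\eps\phi\geq 0$), and $u_\eps\to-\infty$ as $t\downarrow t_{\min}$. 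It then suffices to prove $\sup_\sQ u_\eps\leq 0$ for every $\eps>0$, since fixing a point of $\sQ$ and letting $\eps\downarrow 0$ yields $u\leq 0$.

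So suppose $M_\eps:=\sup_\sQ u_\eps>0$, pick $P_n\in\sQ$ with $u_\eps(P_n)\to M_\eps$, and pass to a subsequence with $P_n\to P^*\in\bar\sQ$. The blow-down of $u_\eps$ near the data-free face keeps $P^*$ off $\partial\sQ\less\mydirac\!\sQ$ — this is the standard device for the portion of $\partial\sQ$ where no boundary value is prescribed and where $u$ need not even be continuous — while $u_\eps^*\leq 0$ on $\mydirac_1\!\sQ$ excludes $P^*\in\mydirac_1\!\sQ$; hence $P^*\in\underline\sQ=\sQ\cup\mydirac_0\!\sQ$, where $u_\eps$ is continuous, so $u_\eps(P^*)=M_\eps>0$. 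If $P^*\in\sQ$, it is an interior space-time maximum, so $Du_\eps(P^*)=0$, $(u_\eps)_t(P^*)=0$ and $D^2 u_\eps(P^*)\preceq 0$; since $a(P^*)\in\sS^+(d)$ forces $-\tr(aD^2 u_\eps)(P^*)\geq 0$, and $c\geq 0$, we get $Lu_\eps(P^*)\geq c(P^*)M_\eps\geq 0$, contradicting $Lu_\eps<0$.

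The crux is the remaining case $P^*\in\mydirac_0\!\sQ$, where $\mydirac_0\!\sQ$ must be shown to behave like an interior point. Because $\mydirac_0\!\sQ$ is $C^{1,\alpha}$ with $n_0=0$ (by \eqref{eq:Degeneracy_locus_parabolic}), the time axis and the spatial directions tangent to $\mydirac_0\!\sQ$ all lie in the boundary, so a maximum of $u_\eps$ over $\underline\sQ$ attained at $P^*$ forces $(u_\eps)_t(P^*)=0$ and a vanishing tangential spatial gradient, while the inward normal derivative obeys $\langle Du_\eps,\vec n\rangle(P^*)\leq 0$. With $b^\perp=\langle b,\vec n\rangle\geq 0$ from \eqref{eq:b_perp_nonnegative_boundary_parabolic} and $b^\parallel$ contracted against the vanishing tangential gradient, this gives $-\langle b,Du_\eps\rangle(P^*)\geq 0$; using \eqref{eq:Ventcel} to set $\tr(aD^2 u_\eps)(P^*)=0$ and $c\geq 0$ on $\mydirac_0\!\sQ$, I obtain $Lu_\eps(P^*)=-(u_\eps)_t-\langle b,Du_\eps\rangle+c\,u_\eps\geq 0$, the desired contradiction. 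The main obstacle is precisely this step: since parabolic $C^1(\underline\sQ)$ does not by itself provide $u_t$ up to $\mydirac_0\!\sQ$, I must invoke the equivalence \eqref{eq:Equivalent_first_order_boundary_condition} — which for $u\in C^2_s(\underline\sQ)$ reduces $Lu$ on $\mydirac_0\!\sQ$ to the first-order operator $-u_t-\langle b,Du\rangle+cu$ — in order to make sense of and sign every term of $Lu_\eps(P^*)$, and to verify that the two geometric hypotheses $n_0=0$ and $b^\perp\geq 0$ are exactly what render the boundary contribution non-negative. The reduction to $c\geq 0$, the strictification, and the handling of the data-free face are routine adaptations of the classical arguments.
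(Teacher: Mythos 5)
Your reduction to nonnegative zeroth-order coefficient and your treatment of a maximum at $P^*\in\sQ$ or $P^*\in\mydirac_0\!\sQ$ coincide with the paper's own argument: the paper makes the same exponential-in-time substitution but normalizes to $c\geq 1$, so that $c(P^0)u(P^0)>0$ makes the contradiction strict without any perturbation, while your variant with $c\geq 0$ plus the strictifying barrier is an acceptable substitute for those two cases, and your use of \eqref{eq:Ventcel}, the vanishing of tangential (including time) derivatives from $n_0=0$, and $b^\perp\geq 0$ at a degenerate-boundary maximum is exactly the paper's computation. The genuine gap is your disposal of the remaining case $P^*\in\partial\sQ\less\mydirac\!\sQ$. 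Your barrier $\phi=(t-t_{\min})^{-1}$ blows up only on the single time level $\{t=t_{\min}\}$, but the theorem is stated for an \emph{arbitrary} bounded open subset $\sQ\subset\RR^{d+1}$, and for non-cylindrical $\sQ$ the data-free portion $\partial\sQ\less\mydirac\!\sQ$ is not confined to $\{t=t_{\min}\}$: it consists of all $P^0\in\partial\sQ$ for which some cylinder $Q_\eps(P^0)$ misses $\sQ$, and such ``bottom-like'' points occur at arbitrary interior time levels. For instance, with $d=1$ and intervals $B_1\Subset B_2$, the staircase domain $\sQ=\left((0,1)\times B_1\right)\cup\left((\tfrac12,1)\times B_2\right)$ has every point $(\tfrac12,x)$ with $x\in B_2\less\bar B_1$ in $\partial\sQ\less\mydirac\!\sQ$, at time $\tfrac12>t_{\min}=0$. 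A maximizing sequence for $u_\eps$ can accumulate at such a point, where $u_\eps$ does not blow down, $u$ is not assumed continuous (let alone to admit the one-sided sign $-u_t\geq 0$ one would want), and no boundary datum is available; nothing in your argument rules this out. This is precisely the case the paper declines to treat by a barrier: its proof invokes the argument of \cite[Lemma 2.3]{Lieberman}, which is engineered for non-cylindrical domains, and it remarks explicitly that the simpler Krylov-type device \cite[Theorem 8.1.2]{Krylov_LecturesHolder} --- in essence your $(t-t_{\min})^{-1}$ trick --- suffices only when $\sQ=(0,T)\times\sO$. So your proof is complete for parabolic cylinders but not in the stated generality; you must either restrict to cylinders or import the slicing argument from Lieberman's proof for the data-free portion of $\partial\sQ$.

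A smaller loose end: after excluding $P^*\in\mydirac_1\!\sQ$ and $P^*\in\partial\sQ\less\mydirac\!\sQ$ you conclude $P^*\in\underline\sQ$, but $\mydirac\!\sQ\less(\mydirac_0\!\sQ\cup\mydirac_1\!\sQ)$ need not be empty: in Example \ref{exmp:Boundary_cylinder_parabolic} the corner set $\{T\}\times\partial_0\sO$ belongs to $\mydirac\!\sQ$ but to neither $\mydirac_0\!\sQ$ nor $\mydirac_1\!\sQ$, and at such points $u_\eps$ is neither controlled by data nor known to be continuous. The paper closes this by arguing that a positive maximum forces $P^0\notin\overline{\mydirac_1\!\sQ}$ and then invoking the decomposition \eqref{eq:Degenerate_boundary_decomposition}, which yields $\mydirac\!\sQ\less\overline{\mydirac_1\!\sQ}\subseteq\mydirac_0\!\sQ$; you should carry out the exclusion for the closure $\overline{\mydirac_1\!\sQ}$ rather than for the open portion $\mydirac_1\!\sQ$ alone.
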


\begin{proof}
For the reasons noted prior to the statement of Theorem \ref{thm:Weak_maximum_principle_C2s_bounded_domain}, we shall adapt the proof of \cite[Lemma 2.3]{Lieberman} due to Lieberman; when $\sQ = (0,T)\times\sO$, the argument is simpler and an adaptation of the proof \cite[Theorem 8.1.2]{Krylov_LecturesHolder} due to Krylov would suffice.

By hypotheses \eqref{eq:c_bounded_below} and \eqref{eq:c_bounded_below_boundary}, we have $c\geq -K_0$ on $\underline\sQ$, for some positive constant $K_0$, and because $\sQ$ is bounded by hypothesis, we may suppose that $\sQ$ obeys \eqref{eq:Domain_finite_upper_time}, that is, $\sQ\subset(-\infty,T)\times\RR^d$ for some positive constant, $T$. Therefore, by the method of proof of Lemma \ref{lem:Parabolic_weak_maximum_principle_apriori_estimates_c_bounded_below}, we may assume without loss of generality that $c\geq 1$ on $\underline\sQ$.

Since $u^*$ is upper semicontinuous on the compact set $\bar\sQ$, there exists a point $P^0 \in \bar\sQ$ such that $u^*(P^0)=\sup_\sQ u$. Suppose $u^*(P^0)>0$, in which case the boundary condition, $u^*\leq 0$ on $\mydirac_1\!\sQ$, implies that $P^0 \notin \overline{\mydirac_1\!\sQ}$ and thus we must have $P^0 \in \sQ$ or $\mydirac_0\!\sQ$ or $\partial\sQ\less\mydirac\!\sQ$. Note that our hypothesis, $Lu\leq 0$ on $\sQ$, is equivalent to $Lu\leq 0$ on $\underline\sQ$ by Definition \ref{defn:Second-order_boundary_regularity} since $u\in C^2_s(\underline\sQ)$.

If $P^0 \in\sQ$, then $u^*(P^0)=u(P^0)$ and $a(P^0)\geq 0$ by \eqref{eq:a_nonnegative}, and calculus yields $D^2u(P^0)\leq 0$ and $Du(P^0)=0$ and $u_t(P^0)=0$, so that
$$
Lu(P^0) = -u_t(P^0) - \tr(a(P^0)D^2u(P^0)) - \langle b(P^0), Du(P^0)\rangle + c(P^0)u(P^0) > 0,
$$
contradicting our assumption that $Lu\leq 0$ on $\underline\sQ$. Similarly, if $P^0 \in \mydirac_0\!\sQ$, then we have $\tr(a(P^0)D^2u(P^0))=0$ by Definition \ref{defn:Second-order_boundary_regularity} and $D_{\vec\tau}u(P^0)=0$ (for any tangential vector $\vec\tau(P^0)$) and $D_{\vec n}u(P^0)\geq 0$ (where $\vec n(P^0)$ is the inward-pointing normal vector) by calculus and $u_t(P^0)=0$ by \eqref{eq:Degeneracy_locus_parabolic}, and $b^\perp(P^0) \geq 0$ by \eqref{eq:b_perp_nonnegative_boundary_parabolic}, so we obtain
\begin{align*}
Lu(P^0) &= -u_t(P^0) - \tr(a(P^0)D^2u(P^0)) - \langle b(P^0), Du(P^0)\rangle + c(P^0)u(P^0)
\\
&= b^\perp(P^0) D_{\vec n}u(P^0) + c(P^0)u(P^0) > 0,
\end{align*}
again contradicting our assumption that $Lu\leq 0$ on $\underline\sQ$.

It remains to consider the case $P^0\in\partial\sQ\less\mydirac\!\sQ$. But the proof of \cite[Lemma 2.3]{Lieberman} (or the simpler proof of \cite[Theorem 8.1.2]{Krylov_LecturesHolder} when $\sQ$ is a cylinder) now applies to show that this remaining case cannot occur. This completes the proof of Theorem \ref{thm:Weak_maximum_principle_C2s_bounded_domain}.
\end{proof}

\begin{rmk}[Application to the parabolic Heston operator]
\label{rmk:Parabolic_heston_C2s_bounded}
The hypotheses of Theorem \ref{thm:Weak_maximum_principle_C2s_bounded_domain} on the coefficients of $L$ are obeyed in the case of the parabolic Heston operator \eqref{eq:Heston_generator}, where $d=2$ and  $\mydirac_0\!\sQ\subset(0,T)\times\RR\times\{0\}$. Since $\vec n=e_2$ on $\mydirac_0\!\sQ$, we have $b^2(t,x_1,x_2) = \kappa(\theta-x_2)$ and so the condition \eqref{eq:b_perp_nonnegative_boundary_parabolic} is obeyed as $b^\perp(t,x_1,0) = b^2(t,x_1,0) = \kappa\theta > 0$ on $(0,T)\times\RR\times\{0\}$, while $r\geq 0$ and thus \eqref{eq:c_nonnegative_domain}, \eqref{eq:c_nonnegative_boundary} are obeyed.
\end{rmk}

\subsection{Weak maximum principle for bounded functions in $C^2_s$ on unbounded domains}
\label{subsec:WeakMaxPrincipleBVProbSmoothUnboundedDomain}
Next, we extend Theorem \ref{thm:Weak_maximum_principle_C2s_bounded_domain} to the case of bounded $C^2_s$ functions on \emph{unbounded} open subsets. Our proof of the weak maximum principle for bounded $C^2_s$ functions on unbounded open subsets and parabolic operators with non-negative characteristic form is a modification of the proof of \cite[Theorem 5.3]{Feehan_maximumprinciple_v1} (the analogous statement for elliptic operators) and \cite[Lemma 3.4]{Feehan_Pop_mimickingdegen_pde} (where $Q=(0,T)\times\RR^{d-1}\times\RR_+$ and $a(t,x)=x_d\bar a(t,x)$ with $\bar a(t,x)$ strictly elliptic), which are based in turn on the proofs of \cite[Theorem 8.1.4 and Exercise 8.1.22]{Krylov_LecturesHolder}.

\begin{thm}[Weak maximum principle for bounded functions in $C^2_s$ on unbounded domains]
\label{thm:Weak_maximum_principle_C2s_unbounded_domain}
Let $\sQ\subseteqq\RR^{d+1}$ be a possibly unbounded open subset and assume that the coefficients $a$, $b$ of $L$ in \eqref{eq:Generator_parabolic} obey \eqref{eq:b_perp_nonnegative_boundary_parabolic} and \eqref{eq:Quadratic_growth}. Require that either $c$ obeys \eqref{eq:c_positive_lower_bound_domain} or that $c$ and $\sQ$ obey \eqref{eq:c_bounded_below} and \eqref{eq:Domain_finite_upper_time}, respectively. If $u\in C^2_s(\underline\sQ)$ obeys $\sup_\sQ u<\infty$ and $u^*\leq 0$ on $\mydirac_1\!\sQ$, then
$$
u \leq 0\vee\frac{1}{c_0}\sup_\sQ  Lu \quad\hbox{on } \sQ ,
$$
when $c$ obeys \eqref{eq:c_positive_lower_bound_domain}, or
$$
u(t,x) \leq 0\vee e^{(K_0+1)(T-t)}\sup_\sQ  Lu, \quad \forall\,(t,x)\in\sQ,
$$
when $c$ and $\sQ$ obey \eqref{eq:c_bounded_below} and \eqref{eq:Domain_finite_upper_time}. In particular, $L$ has the weak maximum principle property on $\underline\sQ$ in the sense of Definition \ref{defn:Weak_maximum_principle_property_parabolic}, when $\Sigma = \mydirac_0\!\sQ$ and $\fK$ is the set of $u\in C^2_s(\underline\sQ)$ such that $\sup_\sQ u < \infty$.
\end{thm}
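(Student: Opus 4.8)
The plan is to reduce to the case $c\geq c_0>0$ and then control the behaviour of $u$ at infinity with an explicit comparison function, following the scheme of \cite[Theorem 5.3]{Feehan_maximumprinciple_v1} and \cite[Theorem 8.1.4]{Krylov_LecturesHolder}. The second hypothesis on $c$ can be removed first: exactly as in the proof of Lemma \ref{lem:Parabolic_weak_maximum_principle_apriori_estimates_c_bounded_below}, the substitution $u=e^{-\lambda t}w$ with $\lambda=K_0+1$ replaces $L$ by $L+\lambda$, whose zeroth-order coefficient $c+\lambda\geq 1$ is bounded below by the positive constant $1$; the conditions \eqref{eq:b_perp_nonnegative_boundary_parabolic} and \eqref{eq:Quadratic_growth} involve only $a$ and $b$ and so are untouched, and $w=e^{\lambda t}u$ again belongs to the cone because $e^{\lambda t}\leq e^{\lambda T}$ on $\sQ\subset(-\infty,T)\times\RR^d$. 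The estimate for $w$ with constant $1$, transported back through $u=e^{-\lambda t}w$ and $e^{\lambda t}\leq e^{\lambda T}$, produces precisely the factor $e^{(K_0+1)(T-t)}$. It therefore suffices to prove, assuming \eqref{eq:c_positive_lower_bound_domain}, that $u\leq M$ on $\sQ$, where $M:=0\vee c_0^{-1}\sup_\sQ Lu$; there is nothing to prove when $\sup_\sQ Lu=+\infty$, so I assume $M<\infty$.

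The key device is a comparison function exploiting the quadratic growth \eqref{eq:Quadratic_growth}. I would take $\phi(x)=(1+|x|^2)^m$ for a small exponent $m\in(0,1)$ to be fixed, which is smooth, satisfies $\phi\geq 1$ and $\phi\to\infty$ as $|x|\to\infty$, and obeys $\tr(aD^2\phi)=0$ on $\mydirac_0\!\sQ$ because $a$ vanishes there; hence $w_\eps:=u-M-\eps\phi$ lies in $C^2_s(\underline\sQ)$ for every $\eps>0$, and $\sup_\sQ w_\eps\leq\sup_\sQ u<\infty$, so $w_\eps$ belongs to the cone $\fK$. A direct computation gives
\[
L\phi = -2m(1+|x|^2)^{m-1}\left(\tr a+\langle b,x\rangle\right) + 4m(1-m)(1+|x|^2)^{m-2}\langle ax,x\rangle + c\,\phi .
\]
Since $a\in\sS^+(d)$ the middle term is non-negative for $m<1$, while \eqref{eq:Quadratic_growth} bounds the first term below by $-2mK(1+|x|^2)^m$, so that $L\phi\geq(c_0-2mK)\phi\geq 0$ once $0<m<\min\{1,c_0/(2K)\}$, which I now fix. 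Because $M$ is constant and $c\geq c_0>0$, one checks as in Proposition \ref{prop:Parabolic_weak_maximum_principle_apriori_estimates}\eqref{item:Subsolution_Lu_arb_sign} that $L(u-M)=Lu-cM\leq 0$ on $\sQ$, whence $Lw_\eps=L(u-M)-\eps L\phi\leq 0$ on $\sQ$; moreover $w_\eps^*\leq u^*-M-\eps\phi\leq 0$ on $\mydirac_1\!\sQ$ since $u^*\leq 0$ there and $M,\phi\geq 0$.

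The final step is the limiting argument. Because $u$ is bounded above and $\phi\to\infty$, the upper semicontinuous function $w_\eps^*$ tends to $-\infty$ as $|x|\to\infty$, so its supremum over $\bar\sQ$ is attained at some point $P^0$ lying in a bounded subset of $\bar\sQ$. If $w_\eps(P^0)>0$ then $P^0\notin\overline{\mydirac_1\!\sQ}$, and I would rule out each remaining location of $P^0$ exactly as in the proof of Theorem \ref{thm:Weak_maximum_principle_C2s_bounded_domain}: at an interior point of $\sQ$, calculus and $a\geq 0$ give $Lw_\eps(P^0)>0$; at a point of $\mydirac_0\!\sQ$ the second-order boundary condition together with \eqref{eq:b_perp_nonnegative_boundary_parabolic}, the vanishing of the time derivative of $w_\eps$ at $P^0$, and $c\geq c_0$ again give $Lw_\eps(P^0)>0$; and the case $P^0\in\partial\sQ\less\mydirac\!\sQ$ is excluded by the argument of \cite[Lemma 2.3]{Lieberman}. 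Each possibility contradicts $Lw_\eps\leq 0$, so $w_\eps\leq 0$, that is $u\leq M+\eps\phi$ on $\sQ$; letting $\eps\downarrow 0$ at each fixed point of $\sQ$ yields $u\leq M$, as required.

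I expect the main obstacle to be the construction of $\phi$: one needs $L\phi\geq 0$ together with $\phi\to\infty$ while keeping the sharp constant $c_0^{-1}$, which forbids the cruder option of absorbing the growth by enlarging $c$. The exponent $m$ must be chosen small relative to $c_0/K$ precisely so that the favourable sign of the $\langle ax,x\rangle$ term survives and the first- and second-order growth is dominated by $c_0\phi$; everything else is a routine transcription of the bounded-domain maximum principle Theorem \ref{thm:Weak_maximum_principle_C2s_bounded_domain} and the reduction Lemma \ref{lem:Parabolic_weak_maximum_principle_apriori_estimates_c_bounded_below}.
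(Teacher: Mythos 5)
Your overall route is exactly the paper's: reduce the case \eqref{eq:c_bounded_below}, \eqref{eq:Domain_finite_upper_time} to $c\geq 1$ via the substitution $u=e^{-\lambda t}w$ of Lemma \ref{lem:Parabolic_weak_maximum_principle_apriori_estimates_c_bounded_below}, then run a Krylov-style barrier argument exploiting \eqref{eq:Quadratic_growth}, concluding with the maximum-point trichotomy from the proof of Theorem \ref{thm:Weak_maximum_principle_C2s_bounded_domain} (the paper's proof is a deferral to the elliptic analogue \cite[Theorem 5.3]{Feehan_maximumprinciple_v1}, itself modeled on \cite[Theorem 8.1.4]{Krylov_LecturesHolder}, and your computation of $L\phi$, the sign of the $\langle ax,x\rangle$ term, the choice $0<m<\min\{1,c_0/(2K)\}$, and the $C^2_s$ membership of $w_\eps$ via the vanishing of $a$ on $\mydirac_0\!\sQ$ are all correct fillings-in of what the paper omits). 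However, there is one genuine gap: your barrier $\phi(x)=(1+|x|^2)^m$ is independent of $t$, while under hypothesis \eqref{eq:c_positive_lower_bound_domain} the theorem places \emph{no} restriction on the temporal extent of $\sQ$ (and even \eqref{eq:Domain_finite_upper_time} only bounds $t$ from above). On a domain such as $\sQ=\RR\times B_1$, the function $w_\eps^*$ does \emph{not} tend to $-\infty$ at infinity in $\bar\sQ$: it decays as $|x|\to\infty$ uniformly in $t$, but along a spatially bounded strip with $t\to\pm\infty$ there is no decay at all, so a maximizing sequence for the upper semicontinuous $w_\eps^*$ may escape to $t=\pm\infty$ and the supremum need not be attained. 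Your sentence ``its supremum over $\bar\sQ$ is attained at some point $P^0$ lying in a bounded subset of $\bar\sQ$'' is therefore false in general, and without a maximum point the entire derivative argument has nowhere to run.

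The repair is to build the time variable into the barrier, and this is precisely what makes the parabolic transcription of the elliptic proof more than cosmetic. For instance, replace $\phi$ by $\varphi(t,x):=(1+t^2+|x|^2)^m$: writing $\rho:=1+t^2+|x|^2$, one has $|\varphi_t|=2m|t|\rho^{m-1}\leq 2m\rho^{m-1/2}\leq 2m\varphi$ (since $\rho\geq 1$), while the spatial first- and second-order terms are estimated exactly as in your computation, so that $L\varphi\geq\bigl(c_0-2m(K+1)\bigr)\varphi\geq 0$ once $0<m\leq c_0/(2(K+1))$ and $m<1$; the temporal part contributes nothing to $\tr(aD^2\varphi)$ (the Hessian is spatial, per the paper's conventions), so $u-M-\eps\varphi$ still lies in $C^2_s(\underline\sQ)$, and now $\varphi\to\infty$ in \emph{every} unbounded direction of $\RR^{d+1}$, restoring the compactness needed for attainment. (Alternatively, add $\delta\cosh(\mu t)$ with $0<\mu\leq c_0$, which satisfies $L(\cosh\mu t)\geq(c_0-\mu)\cosh\mu t\geq 0$, and let $\delta\downarrow 0$ before $\eps\downarrow 0$.) With this single modification your argument is complete and coincides with the paper's intended proof.
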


\begin{proof}
It suffices to consider the case where $c$ obeys \eqref{eq:c_positive_lower_bound_domain}, as the case where $c$ and $\sQ$ obey \eqref{eq:c_bounded_below} and \eqref{eq:Domain_finite_upper_time} follows from the proof of Lemma \ref{lem:Parabolic_weak_maximum_principle_apriori_estimates_c_bounded_below}. When $c$ obeys \eqref{eq:c_positive_lower_bound_domain}, the argument is almost identical to the proof of the corresponding elliptic weak maximum principle for bounded $C^2_s$ functions on unbounded open subsets \cite[Theorem 5.3]{Feehan_maximumprinciple_v1} and so we omit the details.
\end{proof}

\begin{rmk}[Application to the parabolic Heston operator]
\label{rmk:EllipticHestonC2UnboundedDomain}
The hypotheses in Theorem \ref{thm:Weak_maximum_principle_C2s_unbounded_domain} on $L$ are obeyed in the case of the parabolic Heston operator \eqref{eq:Heston_generator} with
$$
a(t,x) = \frac{x_2}{2}\begin{pmatrix}1 & \rho\sigma \\ \rho\sigma & \sigma^2 \end{pmatrix},
\quad
b(t,x) = \begin{pmatrix} r-q-\displaystyle\frac{x_2}{2} & \kappa(\theta-x_2)\end{pmatrix},
\quad\hbox{and}\quad c(t,x) = r,
$$
where the constant coefficients are as described in \S \ref{subsec:Heston}.
\end{rmk}

\section{Strong maximum principle for $L$-subharmonic functions in $C^2_s$}
\label{sec:Strong_maximum_principle_C2s_subharmonic_functions}
Our goal in this section is to prove an analogue of the classical strong maximum principle for linear parabolic second-order operators \cite[Theorem 2.1]{FriedmanPDE} by adapting the argument in \cite{FriedmanPDE} which is in turn based on the proof due to Nirenberg \cite{Nirenberg_1953} and refinements due to Friedman \cite{Friedman_1958}. In particular, we wish to avoid an appeal to the Harnack inequality, as employed by Evans in his exposition of the proof of the classical strong maximum principle \cite[Theorems 7.1.11 and 7.1.12]{Evans}. An alternative approach to a proof of the classical strong maximum principle is provided by Lieberman in \cite[Theorem 2.7]{Lieberman}. See also \cite{Ciomaga_2012} for an interesting extension due to A. Ciomaga for the case of viscosity solutions to fully nonlinear parabolic partial integro-differential equations.

As in \cite[\S 2.1]{FriedmanPDE}, we shall consider an operator of the form \eqref{eq:Generator_parabolic}, but note that our convention for the sign of $L$ is \emph{opposite} to that of \cite[\S 2.1]{FriedmanPDE}. Our proof of the strong maximum principle (Theorem \ref{thm:Friedman_theorem_2-1}) follows the pattern of proof \cite[Theorem 2.1]{FriedmanPDE}, except that points in $\mydirac_0\!\sQ$ are regarded as `interior'. While we shall assume that the coefficients $b, c$ of $L$ obey \eqref{eq:bc_locally_bounded}, that is, are locally bounded on $\sQ$, we shall not require $a,b,c$ to be continuous on $\sQ$ as assumed in \cite[\S 2.1]{FriedmanPDE}. (Friedman notes \cite[p. 34]{FriedmanPDE} that the results of \cite[\S 2.1]{FriedmanPDE} extend to the case where $a,b,c$ are bounded on $\sQ$.) However, we require that the coefficients $a, b$ obey \eqref{eq:a_continuous_degenerate_boundary} and \eqref{eq:b_continuous_degenerate_boundary}, that is, are continuous on $\mydirac_0\!\sQ$. Before we can state and prove the strong maximum principle for the parabolic operator, we adapt the following \emph{notational conventions} of \cite[p. 34]{FriedmanPDE}.

\begin{defn}[Connected subsets of $\sQ\subset\RR^{d+1}$]
\label{defn:Connected_subsets_parabolic}
For any point $P^0=(t^0,x^0) \in \underline \sQ$, we denote by $S(P^0)$ the set of all points $P \in \underline \sQ$ which can be connected to $P^0$ by a simple continuous curve in $\underline \sQ$ along which the time coordinate is non-\emph{increasing}\footnote{In \cite[p. 34]{FriedmanPDE}, the time coordinate is required to be non-\emph{decreasing}, consistent with Friedman's convention of considering an initial value problem rather than the convention of considering a terminal value problem in this article and \cite{Bensoussan_Lions}.}
from $P$ to $P^0$. By $C(P^0)$ we denote the connected component of $\underline \sQ\cap\{t=t^0\}$ which contains $P^0$.
\end{defn}

Clearly, $C(P^0) \subset S(P^0)$. Since $C(P^0)$ is a connected component of $\underline \sQ\cap\{t=t^0\}$, it is necessarily a closed subset of $\underline \sQ\cap\{t=t^0\}$ and, if the number of connected components is finite, then it is also an open subset.

\begin{exmp}[$S(P^0)$ and $C(P^0)$ when $\sQ$ is a parabolic cylinder]
\label{exmp:Connected_subsets_parabolic_cylinder}
Suppose, as in Example \ref{exmp:Boundary_cylinder_parabolic}, that $\sQ = (0,T)\times\sO = \sO_T$ for some spatial domain $\sO\subseteqq\RR^d$ and $T>0$. For any $P^0 = (t^0,x^0) \in \underline \sQ$, Definition \ref{defn:Connected_subsets_parabolic} yields
$$
S(P^0) = (0,t_0]\times\underline\sO \quad\hbox{and}\quad C(P^0) = \{t^0\}\times\underline\sO,
$$
and so
$$
S(P^0) = \left((0,t_0)\times\underline\sO\right) \cup \left(\{t^0\}\times\underline\sO\right) = \underline\sO_{t^0} \cup C(P^0).
$$
This concludes our example.
\end{exmp}

We now proceed as in \cite[\S 2.1]{FriedmanPDE} and begin with the following analogue\footnote{We omit the assumption that $L$ is locally strictly parabolic on $\sQ$ (see \cite[Assumption (A), p. 34]{FriedmanPDE}) and the assumption  (see \cite[pp. 33--34]{FriedmanPDE}) that the coefficients of $L$ are continuous or bounded, since these conditions are not needed for the proof of \cite[Lemma 2.1]{FriedmanPDE}.} of \cite[Lemma 2.1]{FriedmanPDE}.

\begin{lem}[A special case of the strong maximum principle]
\label{lem:Friedman_lemma_2-1}
Let $\sQ\subset\RR^{d+1}$ be an open subset and assume that the coefficients of $L$ obey
\eqref{eq:b_perp_nonnegative_boundary_parabolic}, \eqref{eq:c_nonnegative_domain}, and \eqref{eq:c_nonnegative_boundary}. If $u \in C^2_s(\underline\sQ)$ obeys either $Lu<0$ on $\underline\sQ$ or $Lu\leq 0$ on $\sQ$ and $c>0$ on $\underline\sQ$, then $u$ cannot have a positive maximum in $\underline\sQ$.
\end{lem}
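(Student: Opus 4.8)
The plan is to argue by contradiction, supposing that $u$ attains a positive maximum at some point $P^0 \in \underline\sQ$, and then to derive a contradiction by examining the sign of $Lu(P^0)$. The key observation, which distinguishes this from the classical statement, is that both genuinely interior points $P^0 \in \sQ$ and degenerate-boundary points $P^0 \in \mydirac_0\!\sQ$ can be treated as `interior' for the purpose of the maximum principle, because of the second-order boundary condition encoded in $C^2_s(\underline\sQ)$. First I would reduce to analyzing the value of $Lu(P^0)$ using elementary calculus at a maximum.

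For the case $P^0 \in \sQ$: at an interior maximum, standard calculus gives $Du(P^0)=0$, $u_t(P^0)=0$, and $D^2u(P^0)\leq 0$. Since $a(P^0)\in\sS^+(d)$ by \eqref{eq:a_nonnegative}, we have $\tr(a(P^0)D^2u(P^0))\leq 0$, hence $-\tr(aD^2u)(P^0)\geq 0$. The remaining terms give $-u_t - \langle b,Du\rangle + cu = c(P^0)u(P^0)$. When $Lu<0$ on $\underline\sQ$, we immediately contradict $Lu(P^0)\geq c(P^0)u(P^0) \geq 0$ (using \eqref{eq:c_nonnegative_domain} and $u(P^0)>0$). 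When instead $Lu\leq 0$ on $\sQ$ and $c>0$, the strict inequality $c(P^0)u(P^0)>0$ forces $Lu(P^0)>0$, again a contradiction.

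For the case $P^0 \in \mydirac_0\!\sQ$: this is where the second-order boundary condition does the essential work. By Definition \ref{defn:Second-order_boundary_regularity}, since $u\in C^2_s(\underline\sQ)$ we have $\tr(a(P^0)D^2u(P^0))=0$, so the degenerate second-order term simply drops out. Because $n_0=0$ on $\mydirac_0\!\sQ$ by \eqref{eq:Degeneracy_locus_parabolic}, the time direction is tangential, and at a maximum along $C(P^0)$ we get $u_t(P^0)=0$; similarly the tangential spatial derivatives vanish, $D_{\vec\tau}u(P^0)=0$, while the inward normal derivative satisfies $D_{\vec n}u(P^0)\geq 0$. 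Decomposing $\langle b,Du\rangle$ into normal and tangential parts via \eqref{eq:b_splitting_parabolic}, only the normal component survives, yielding
\begin{equation*}
Lu(P^0) = b^\perp(P^0)\,D_{\vec n}u(P^0) + c(P^0)u(P^0).
\end{equation*}
Now $b^\perp(P^0)\geq 0$ by \eqref{eq:b_perp_nonnegative_boundary_parabolic} and $D_{\vec n}u(P^0)\geq 0$, so the first term is nonnegative; combined with $c(P^0)u(P^0)\geq 0$ (from \eqref{eq:c_nonnegative_boundary} and $u(P^0)>0$, or strictly positive when $c>0$), we again obtain $Lu(P^0)\geq 0$ (or $>0$ in the second hypothesis), contradicting $Lu<0$ on $\underline\sQ$ (or $Lu\leq 0$ on $\sQ$ with the strict sign from $c$).

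The main obstacle, and the point requiring the most care, is the degenerate-boundary case: one must justify that the curve along which the maximum is realized permits the calculus conclusions $u_t(P^0)=0$ and $D_{\vec\tau}u(P^0)=0$, which relies on $P^0$ being a maximum over the full neighborhood $\underline\sQ$ (so that variation in all tangential and temporal directions is admissible, using that $n_0=0$ makes time tangential there) together with the $C^1(\underline\sQ)$ regularity guaranteeing these derivatives exist and are continuous up to $\mydirac_0\!\sQ$. The genuine subtlety is that the ordinarily problematic second-order term, which would have an indeterminate sign at a boundary point, is eliminated outright by the hypothesis $u\in C^2_s(\underline\sQ)$; this is precisely why the degenerate boundary plays the role of the interior. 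I expect the interior case and the bookkeeping of the sign hypotheses to be routine, and I would present the degenerate-boundary computation carefully as the heart of the argument.
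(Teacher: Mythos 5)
Your proposal follows exactly the paper's route: argue by contradiction, treat an interior maximum $P^0\in\sQ$ by the classical calculus argument, and treat $P^0\in\mydirac_0\!\sQ$ by using the $C^2_s$ condition of Definition \ref{defn:Second-order_boundary_regularity} to kill the term $\tr(aD^2u)(P^0)$, the tangency of $e_0$ to $\mydirac_0\!\sQ$ (since $n_0=0$ there, by \eqref{eq:Degeneracy_locus_parabolic}) to get $u_t(P^0)=0$, and the sign condition \eqref{eq:b_perp_nonnegative_boundary_parabolic} together with the conditions on $c$ to force $Lu(P^0)\geq c(P^0)u(P^0)$. This is precisely the paper's proof, which defers the interior case to Friedman and writes out only the degenerate-boundary case.

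However, your boundary computation contains two sign errors that happen to cancel. First, since $\vec n(P^0)$ is the \emph{inward}-pointing normal and $P^0$ is a maximum over $\underline\sQ$, moving into the domain cannot increase $u$, so $D_{\vec n}u(P^0)\leq 0$, not $\geq 0$ as you claim. Second, with the sign convention \eqref{eq:Generator_parabolic} the reduced expression at $P^0$ is $Lu(P^0) = -b^\perp(P^0)\,D_{\vec n}u(P^0) + c(P^0)u(P^0)$; your displayed formula drops the minus sign on the drift term. Because both signs are flipped, your conclusion $Lu(P^0)\geq c(P^0)u(P^0)$ agrees with the paper's, but as written both intermediate claims are false and must be corrected. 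One further point you gloss over: in the alternative hypothesis $Lu\leq 0$ on $\sQ$ with $c>0$, the point $P^0\in\mydirac_0\!\sQ$ does \emph{not} lie in $\sQ$, so $Lu(P^0)>0$ does not directly contradict the hypothesis; one must first extend the inequality to $Lu\leq 0$ on $\underline\sQ$ using the continuity of $Lu$ up to $\mydirac_0\!\sQ$ furnished by Definition \ref{defn:Second-order_boundary_regularity}, a step the paper records explicitly.
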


\begin{proof}
The proof by contradiction is the same as that of \cite[Lemma 2.1]{FriedmanPDE} when one assumes that $u$ has a positive maximum in $\sQ$ and is similar to the argument in Steps 1 and 2 of the proof of \cite[Theorem 5.1]{Feehan_maximumprinciple_v1} if one assumes that $u$ has a positive maximum in $\mydirac_0\!\sQ$. For completeness and because there some differences between the elliptic case discussed in \cite[Theorem 5.1]{Feehan_maximumprinciple_v1} and the parabolic case discussed here, we provide the details when $u$ has a positive maximum at a point $P^0\in\mydirac_0\!\sQ$ in the degenerate boundary.

We have $D_{\vec \tau}u(P^0) = 0$ for any spatial direction $\vec\tau\in\RR^d$ which is tangential to $\mydirac_0\!\sQ$ at $P^0$ and $u_t(P^0)=0$ since $n_0(P^0)=0$ and $e_0$ is tangential to $\mydirac_0\!\sQ$ at $P^0$ by definition \eqref{eq:Degeneracy_locus_parabolic} of $\mydirac_0\!\sQ$. Moreover, $D_{\vec n}u(P^0) \leq 0$ since $P^0\in\mydirac_0\!\sQ$ is a local maximum and $u\in C^1(\underline\sQ)$ by Definition \ref{defn:Second-order_boundary_regularity}. Thus,
\begin{align*}
Lu(P^0) &= -u_t(P^0) -\tr(a(P^0)D^2u(P^0)) - \langle b(P^0), Du(P^0)\rangle + c(P^0)u(P^0)
\\
&= -b^\perp(P^0) D_{\vec n}u(P^0) + c(P^0)u(P^0)  \quad\hbox{(by Definition \ref{defn:Second-order_boundary_regularity})}
\\
&\geq c(P^0)u(P^0) \quad\hbox{(by \eqref{eq:b_perp_nonnegative_boundary_parabolic})}.
\end{align*}
Suppose that $Lu \leq 0$ on $\sQ$ and $c>0$ on $\mydirac_0\!\sQ$. By continuity of $Lu$ on $\underline\sQ$ via Definition \ref{defn:Second-order_boundary_regularity}, we obtain $Lu \leq 0$ on $\underline\sQ$. If $u(P^0)>0$, we would have $c(P^0)u(P^0) > 0$ and thus $Lu(P^0)>0$, a contradiction.

Suppose that $Lu < 0$ on $\underline\sQ$ and $c\geq 0$ on $\mydirac_0\!\sQ$. If $u(P^0)>0$, we would obtain $c(P^0)u(P^0) \geq 0$ and thus $Lu(P^0)\geq 0$, again a contradiction. This completes the proof of the lemma.
\end{proof}

Next, we have an analogue of \cite[Lemma 2.2]{FriedmanPDE}, replacing the role of the Hopf boundary point lemmas \cite[Lemma 4.1]{Feehan_maximumprinciple_v1}, \cite[Lemma 3.4]{GilbargTrudinger} in the case of the proof of the strong maximum principle for an elliptic, linear, second-order partial differential operator with nonnegative characteristic form.

\begin{figure}
 \centering
 \begin{picture}(460,180)(0,0)
 \put(20,0){\includegraphics[scale=0.5]{./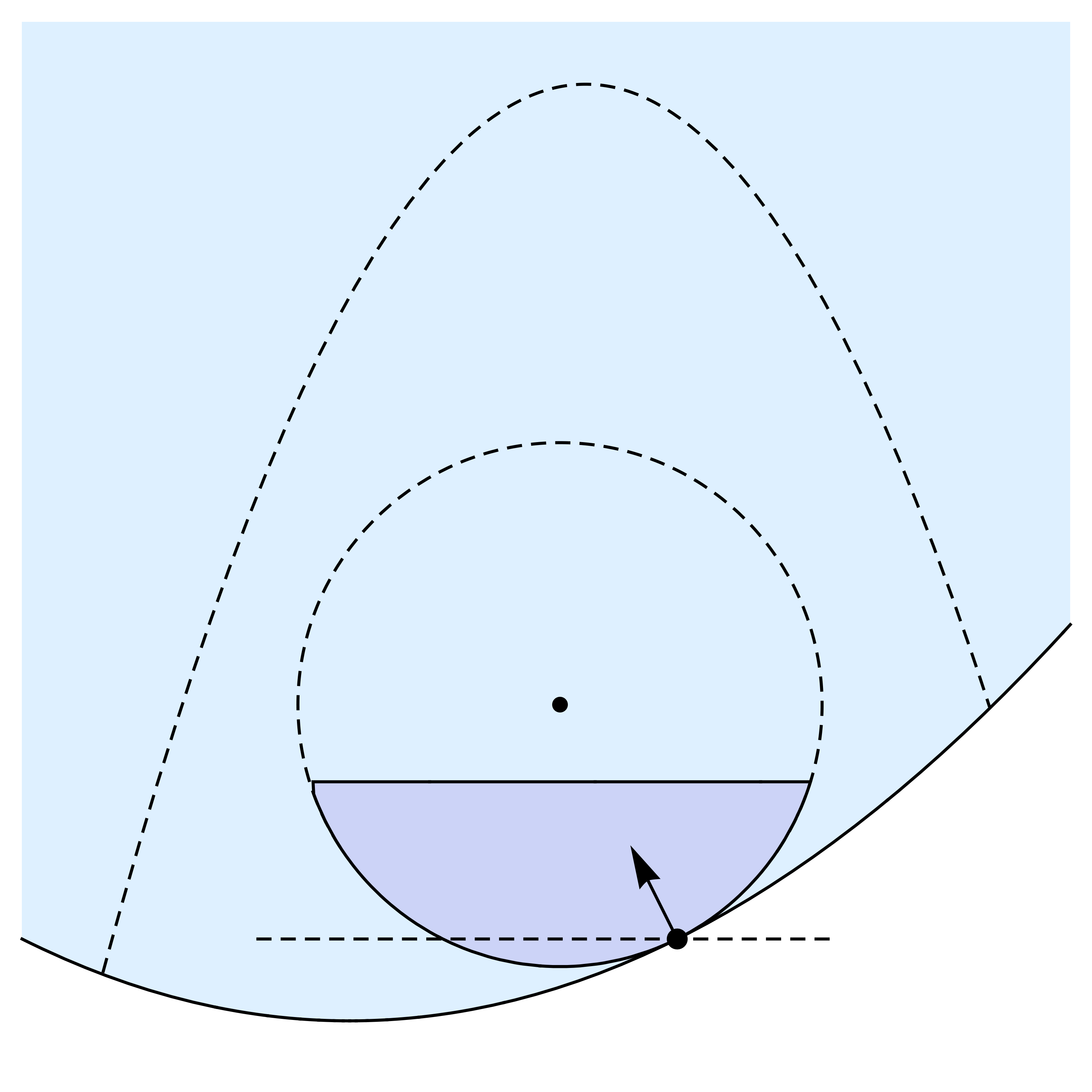}}
 \put(50,140){$\scriptstyle\sQ$}
 \put(50,4){$\scriptstyle \mydirac_0\!\sQ$}
 \put(130,16){$\scriptstyle O$}
 \put(162,23){$\scriptstyle x_d=0$}
 \put(102,140){$\scriptstyle N$}
 \put(102,90){$\scriptstyle E$}
 \put(105,65){$\scriptstyle P^*$}
 \put(90,37){$\scriptstyle D$}
 \put(127,40){$\scriptstyle \vec n$}
 \put(260,0){\includegraphics[scale=0.5]{./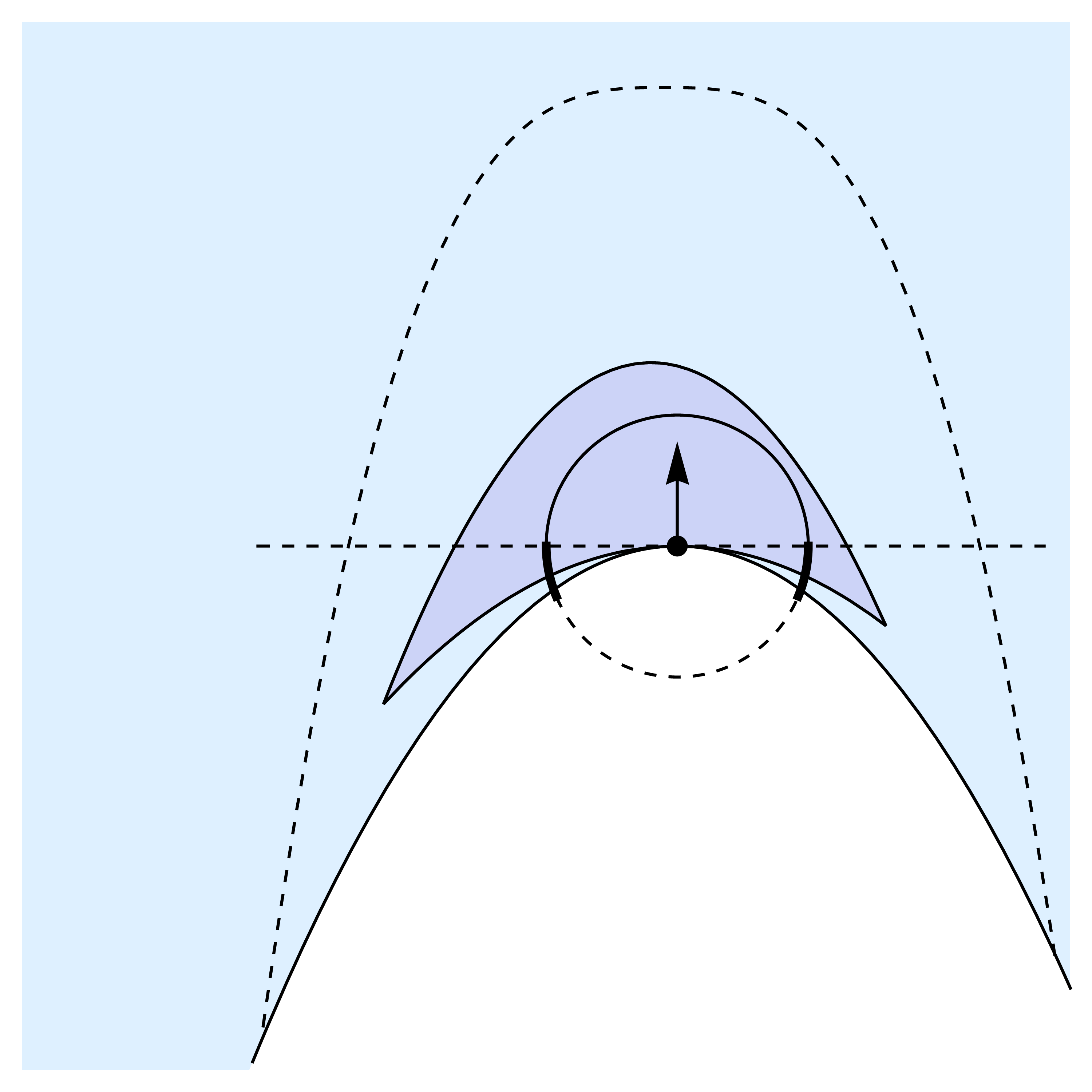}}
 \put(367,81){$\scriptstyle O$}
 \put(378,100){$\scriptstyle C_1$}
 \put(363,101){$\scriptstyle \vec n$}
 \put(378,79){$\scriptstyle \mathbf{C_2}$}
 \put(272,88){$\scriptstyle x_d=0$}
 \put(360,60){$\scriptstyle B_\rho(O)$}
 \put(300,140){$\scriptstyle \widetilde\sQ$}
 \put(272,116){$\scriptstyle x_d>0$}
 \put(321,40){$\scriptstyle \mydirac_0\!\widetilde\sQ$}
 \put(272,60){$\scriptstyle x_d<0$}
 \put(360,140){$\scriptstyle \widetilde N$}
 \put(318,79){$\scriptstyle \widetilde D^-$}
 \put(330,102){$\scriptstyle \widetilde D^+$}
 \end{picture}
 \caption[Quarter-ball in $(t,x)$-space, $\RR^{d+1}$, and its deformation.]{Quarter-ball in $(t,x)$-space, $\RR^{d+1}$, and its deformation for the case $\bar P\in\mydirac_0\!\sQ$.}
 \label{fig:parabolic_domain_interior_ball_and_deformation}
\end{figure}

\begin{prop}[Hopf-type lemma]
\label{prop:Friedman_lemma_2-2}
Let $\sQ\subset\RR^{d+1}$ be an open subset and assume that the coefficients of $L$ obey \eqref{eq:a_locally_strictly_parabolic_interior_domain}\footnote{It is sufficient for the proof of Proposition \ref{prop:Friedman_lemma_2-2} that $b^\perp(\bar t, \bar x) > 0$ in the case $(\bar t, \bar x) \in \mydirac_0\!\sQ$.} \eqref{eq:b_perp_positive_boundary_parabolic}, \eqref{eq:c_nonnegative_domain}, \eqref{eq:bc_locally_bounded}, \eqref{eq:a_continuous_degenerate_boundary}, and \eqref{eq:b_continuous_degenerate_boundary}. Assume that $u \in C^2_s(\underline\sQ)$
obeys $Lu\leq 0$ on $\sQ$ and that $u$ achieves a positive maximum $M$ in $\underline\sQ$. Suppose that $\underline\sQ$ contains the closure $\bar E$ of an open solid ellipsoid,
$$
E := \left\{(t,x) \in \RR^{d+1}: \gamma_0(t-t^*)^2+\sum_{i=1}^d\gamma_i(x_i-x_i^*)^2 < R^2\right\},
$$
where $\gamma_i>0$ for $i=0,1,\ldots,d$, and $R>0$, and that $u<M$ on $E$ and $u(\bar t, \bar x) = M$ at some point $(\bar t, \bar x) \in \partial E$. Then $\bar x = x^*$.
\end{prop}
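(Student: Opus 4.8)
The plan is to argue by contradiction in the manner of Nirenberg and Friedman \cite[Lemma 2.2]{FriedmanPDE}, treating points of $\mydirac_0\!\sQ$ as interior. Suppose $\bar x\neq x^*$ and abbreviate $\bar P:=(\bar t,\bar x)$ and $\xi:=(\gamma_i(x_i-x_i^*))_{i=1}^d$, so $\xi(\bar P)\neq 0$. I would introduce the exponential Hopf barrier
\[
v(t,x):=e^{-\alpha\rho^2(t,x)}-e^{-\alpha R^2},\qquad \rho^2:=\gamma_0(t-t^*)^2+\sum_{i=1}^d\gamma_i(x_i-x_i^*)^2,
\]
which is positive on $E$ and vanishes on $\partial E$, and record the computation
\[
Lv=e^{-\alpha\rho^2}\Big(-4\alpha^2\langle a\xi,\xi\rangle+2\alpha\tr(a\Gamma)+2\alpha\gamma_0(t-t^*)+2\alpha\langle b,\xi\rangle\Big)+cv,\qquad \Gamma:=\diag(\gamma_1,\dots,\gamma_d).
\]
The goal is to choose $\alpha$ large and a comparison region near $\bar P$ on which $Lv<0$, and then run the Hopf comparison with $w:=u+\eps v$.

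Since $\bar E\subset\underline\sQ$ we have either $\bar P\in\sQ$ or $\bar P\in\mydirac_0\!\sQ$. If $\bar P\in\sQ$ the situation is classical: on a small ellipsoidal annulus $E\setminus\overline{E'}\Subset\sQ$ interior strict parabolicity \eqref{eq:a_locally_strictly_parabolic_interior_domain} gives $\langle a\xi,\xi\rangle\geq\lambda_0|\xi|^2>0$ with the coefficients bounded, so the $-4\alpha^2\langle a\xi,\xi\rangle$ term dominates and $Lv<0$ for large $\alpha$, exactly as in \cite[Lemma 2.2]{FriedmanPDE}. The essential case is $\bar P\in\mydirac_0\!\sQ$. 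Here the inclusion $\bar E\subset\underline\sQ$ forces $E$ to be internally tangent to the $C^{1,\alpha}$ hypersurface $\mydirac_0\!\sQ$ at $\bar P$, so the outward unit normal of $E$ at $\bar P$ equals $-\vec n$; because $n_0=0$ along $\mydirac_0\!\sQ$ by \eqref{eq:Degeneracy_locus_parabolic} this normal is purely spatial, whence $\bar t=t^*$ and $\xi(\bar P)=-\mu\vec n$ with $\mu>0$. Consequently \eqref{eq:b_perp_positive_boundary_parabolic} yields the decisive sign
\[
\langle b,\xi\rangle(\bar P)=-\mu\,b^\perp(\bar P)<0.
\]

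The main obstacle is to promote this pointwise sign into $Lv<0$ on a whole region once the leading term $-4\alpha^2\langle a\xi,\xi\rangle$ is useless (it tends to $0$ as $a\to 0$ on $\mydirac_0\!\sQ$). After flattening $\mydirac_0\!\sQ$ to $\{x_d=0\}$ by a $C^{1,\alpha}$ diffeomorphism, which preserves the parabolic structure and the continuity of $a,b$ up to the boundary (cf.\ \cite[Lemma B.1]{Feehan_perturbationlocalmaxima} and the deformation in Figure \ref{fig:parabolic_domain_interior_ball_and_deformation}), I would take the comparison region $\Omega:=E\cap B_\rho(\bar P)$. By the tangency every point of $\Omega$ lies within distance $\rho$ of $\mydirac_0\!\sQ$, so by \eqref{eq:a_continuous_degenerate_boundary} and $a=0$ on $\mydirac_0\!\sQ$ the term $\tr(a\Gamma)$ is uniformly small on $\Omega$, while \eqref{eq:b_continuous_degenerate_boundary} and continuity of $\xi$ keep $\langle b,\xi\rangle\leq-\delta<0$ on $\Omega$ for $\rho$ small. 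Discarding the favourable term $-4\alpha^2\langle a\xi,\xi\rangle\leq 0$ and using $c\geq 0$ from \eqref{eq:c_nonnegative_domain}, $0\leq v\leq e^{-\alpha\rho^2}$, and the local boundedness \eqref{eq:bc_locally_bounded} of $c$, the common factor $e^{-\alpha\rho^2}$ gives
\[
Lv\leq e^{-\alpha\rho^2}\big(\textstyle\sup_\Omega c-\alpha\delta\big)<0\quad\text{on }\Omega
\]
for $\alpha$ large. This is the heart of the argument: near the degenerate boundary the barrier is driven entirely by the strictly inward drift $b^\perp>0$, which is precisely why \eqref{eq:b_perp_positive_boundary_parabolic} is assumed.

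With $Lv<0$ on $\Omega$ and $Lu\leq 0$ on $\underline\sQ$ (extending to $\mydirac_0\!\sQ$ by Definition \ref{defn:Second-order_boundary_regularity} and continuity of $Lu$), set $w:=u+\eps v$, so $Lw<0$ on $\Omega$. Choosing $\eps>0$ small and using $v=0$ on the $\partial E$ part of $\partial\Omega$ together with $u\leq M-\eta$ on the part interior to $E$, one gets $w\leq M$ on the parabolic boundary of $\Omega$, hence $w\leq M$ on $\Omega$ by Lemma \ref{lem:Friedman_lemma_2-1} (counting $\mydirac_0\!\sQ\cap\Omega$ as interior). Since $w(\bar P)=M$ with $\bar P\in\partial E$, differentiating along the inward normal of $E$ gives $D_{\vec n}w(\bar P)\leq 0$, while $D_{\vec n}v(\bar P)=2\alpha\mu\,e^{-\alpha R^2}>0$, so $D_{\vec n}u(\bar P)\leq-\eps D_{\vec n}v(\bar P)<0$ strictly. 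Finally I evaluate the equation at $\bar P$: by Definition \ref{defn:Second-order_boundary_regularity} $\tr(aD^2u)(\bar P)=0$, while $u_t(\bar P)=0$ and the tangential spatial derivatives vanish, so $Du(\bar P)=D_{\vec n}u(\bar P)\vec n$ and
\[
Lu(\bar P)=-b^\perp(\bar P)\,D_{\vec n}u(\bar P)+c(\bar P)M>0,
\]
contradicting $Lu(\bar P)\leq 0$. (In the interior case the same barrier gives a strictly positive outward normal derivative of $u$ at $\bar P$, contradicting the vanishing of the full space--time gradient at the interior maximum.) Hence $\bar x\neq x^*$ is impossible, i.e.\ $\bar x=x^*$, as claimed.
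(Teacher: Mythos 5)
Your proposal is correct in substance, but in the essential case $\bar P\in\mydirac_0\!\sQ$ it takes a genuinely different route from the paper's. The paper does \emph{not} retain Friedman's exponential barrier there: after the geometric normalization (translation, dilation to a unit ball, rotation, and the observation $t^*=0$ forced by $n_0=0$), it applies an explicit \emph{smooth} diffeomorphism $\Phi$ that pushes the portion of the sphere $\partial E$ with $x_d<3/4$ strictly below the hyperplane $\{x_d=0\}$ --- it never flattens $\mydirac_0\!\sQ$ --- verifies via Claim \ref{claim:Diffeomorphism_derivative_properties} and $a(O)=0$ that the transformed drift still satisfies $\tilde b^d(O)>0$, and then uses the \emph{linear} barrier $h(t,x)=x_d$, for which $Lh=-b^d+cx_d<0$ on a small ball is immediate from \eqref{eq:b_perp_positive_boundary_parabolic}, \eqref{eq:b_continuous_degenerate_boundary}, and \eqref{eq:c_locally_bounded}; the contradiction is then obtained by applying Lemma \ref{lem:Friedman_lemma_2-1} directly to $v=u+\eps h$, which attains a positive maximum at $O$ in $B_\rho(O)\cap\underline\sQ$ while $Lv<0$ there --- no normal derivative is ever computed. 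You instead keep the exponential barrier on $\Omega=E\cap B_\rho(\bar P)$, exploit the tangency ($\bar t=t^*$ and $\xi(\bar P)=-\mu\vec n$) so that the drift term $2\alpha\langle b,\xi\rangle$ is strictly negative while $a\to 0$ along $\mydirac_0\!\sQ$ neutralizes the second-order terms, and then finish Hopf-style: extract $D_{\vec n}u(\bar P)<0$ from the comparison $w=u+\eps v\leq M$ and evaluate $Lu(\bar P)=-b^\perp(\bar P)D_{\vec n}u(\bar P)+c(\bar P)M>0$. In effect you prove the boundary-point Lemma \ref{lem:Degenerate_hopf_lemma_parabolic} en route and use it, whereas the paper obtains that lemma afterwards as a by-product of Proposition \ref{prop:Friedman_lemma_2-2}. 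Your route buys a lighter geometric setup (no pushing-down diffeomorphism, no $\bar C_1\cup C_2$ boundary decomposition, no transformed-coefficient bookkeeping); the paper's route buys a trivial barrier computation and a finish requiring only Lemma \ref{lem:Friedman_lemma_2-1}, both arguments resting on the same key mechanism, namely that near $\mydirac_0\!\sQ$ the barrier is driven entirely by the strictly inward drift $b^\perp>0$.

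Two repairs are needed, neither fatal. First, the preliminary ``flattening of $\mydirac_0\!\sQ$ to $\{x_d=0\}$ by a $C^{1,\alpha}$ diffeomorphism'' is not legitimate in this setting: the transformed second-order coefficients involve second derivatives of the change of variables, and composing $u\in C^2_s(\underline\sQ)$ with a merely $C^{1,\alpha}$ map destroys the $C^2$ structure (the boundary is only assumed $C^{1,\alpha}$ by \eqref{eq:C1alpha_degenerate_boundary_parabolic}, and the paper's $\Phi$ is by contrast given by an explicit smooth formula and serves a different purpose). Fortunately none of your subsequent estimates uses flatness --- only tangency at $\bar P$ together with \eqref{eq:a_continuous_degenerate_boundary} and \eqref{eq:b_continuous_degenerate_boundary} --- so this step should simply be deleted. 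Second, your boundary estimate ``$u\leq M-\eta$ on the part of $\partial\Omega$ interior to $E$'' requires first shrinking the ellipsoid so that $\bar P$ is the \emph{only} point of $\bar E$ with $u=M$ (as the paper does at the outset of its proof), since the closure of $E\cap\partial B_\rho(\bar P)$ may otherwise meet $\partial E$ at points where $u=M$; and in the bracket of $Lv$ you should note that the time term $2\alpha\gamma_0(t-t^*)$ is $O(\alpha\rho)$ because $\bar t=t^*$, so it is absorbed by shrinking $\rho$ exactly as $2\alpha\tr(a\Gamma)$ is.
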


\begin{proof}
The proof is the same as that of \cite[Lemma 2.2]{FriedmanPDE} when $\bar P:=(\bar t, \bar x) \in\sQ$. When $\bar P \in \mydirac_0\!\sQ$,  we shall adapt the method of proof of the Hopf boundary point lemma \cite[Lemma 4.1]{Feehan_maximumprinciple_v1}. We may assume that $\bar P$ is the only point in $\bar E$ where $u=M$ since, otherwise, we may confine our attention to a smaller ellipsoid lying within $E$ and having $\bar P$ as the only common point with $\partial E$. Thus,
\begin{equation}
\label{eq:u_strictly_less_M_punctured_ellipse}
u < M \quad\hbox{on }\bar E \less \{\bar P\},
\end{equation}
and $E$ is tangent to $\mydirac_0\!\sQ$ at $\bar P$. We now proceed by analogy with the proof of \cite[Lemma 4.1]{Feehan_maximumprinciple_v1}, denoting $P^*:=(t^*,x^*)$.

\setcounter{step}{0}
\begin{step}[Geometric simplification]
\label{step:Hopf_geometric_simplification}
We may assume without loss of generality, using a translation of $\RR^{d+1}$ if needed, that $\bar P=O\in\RR^{d+1}$. Moreover, using a diffeomorphism of $\RR^{d+1}$ defined by $(t,x) \mapsto (\tau,y)$, where
$$
\tau = \frac{\sqrt{\gamma_0}t}{R}, \quad y_i = \frac{\sqrt{\gamma_i}x_i}{R}, \quad i=1,\ldots,d,
$$
and then relabeling the coordinates $(\tau,y)$ as $(t,x)$ and relabeling the image $\widetilde P^* = (\tau^*,y^*)$ of $(t^*,x^*)$ again as $P^*=(t^*,x^*)$, we may assume that $E\subset\sQ$ is a unit ball,
$$
E = \left\{(t,x) \in \RR^{d+1}: (t-t^*)^2 + |x-x^*|^2 < 1\right\}.
$$
If $u(t,x) = v(\tau,y)$ and $\widetilde L$ is defined by setting $\widetilde Lv := Lu$, then $u_t = (\sqrt{\gamma_0}/R)v_\tau$ and we may divide the inequality $\widetilde Lv\leq 0$ by the positive constant $\sqrt{\gamma_0}/R$, so the coefficient of $v_\tau$ becomes $1$. We then relabel $v$ as $u$ and $(R/\sqrt{\gamma_0})\widetilde L$ as $L$.

The inward-pointing normal vector $n_0(O)e_0+\vec n(O)=\vec n(O)$ now lies along the line joining $O \in \bar E\cap\partial\sQ$ to the center $P^*$ of the unit ball, $E$. See the illustration on the left in Figure \ref{fig:parabolic_domain_interior_ball_and_deformation}. Because $n_0(O)=0$ by definition \eqref{eq:Degeneracy_locus_parabolic} of $\mydirac_0\!\sQ$, we must therefore have
$$
t^* = 0.
$$
If $x^*=0$, we are done (recall that $\bar x=0$ as a result of our initial translation), so to obtain a contradiction we suppose $x^*\neq 0$ and, with the aid of a rotation of $\RR^d$ (which necessarily fixes the origin), we may further suppose without loss of generality that
$$
x^* = (0,x_d^*)\in\RR^{d-1}\times\RR_+,
$$
that is, $x_d^*$ is \emph{positive} (as a direct consequence of our assumption that $\bar x\neq x^*$) and $x^*$ belongs to the open upper half-space, $\{x\in\RR^d:x_d>0\}$. Since $O\in\partial E$, we may set $(t,x)=(0,0)\in\partial E$ to give $(x_d^*)^2 = 1$, and so
$$
x_d^*=1.
$$
This completes the geometric simplification.
\end{step}

\begin{step}[Pushing downward using a $C^2$ diffeomorphism]
\label{step:Hopf_diffeomorphism}
Writing $x=(x',x_d)\in\RR^d$, we now choose a diffeomorphism,
\begin{equation}
\label{eq:Diffeomorphism}
\Phi:\RR^{d+1}\to\RR^{d+1}, \quad (t,x',x_d) \mapsto (t,x',y_d(t,x',x_d)),
\end{equation}
such that
\begin{align}
\label{eq:Diffeomorphism_fixes_origin}
{}&y_d(O)=0,
\\
\label{eq:Diffeomorphism_pushed_boundary_lower_halfspace}
{}&\Phi\left(\left\{(t,x) \in \RR^{d+1}: x_d < 3/4\right\} \cap \partial E\less\{O\}\right)\subset \left\{(t,x)\in\RR^{d+1}:x_d< 0\right\},
\\
{}&\Phi = \hbox{Identity map outside a bounded open neighborhood of } \bar E \subset \RR^{d+1}.
\end{align}
Because of \eqref{eq:Diffeomorphism_fixes_origin}, the diffeomorphism fixes the origin in $\RR^{d+1}$, and because of \eqref{eq:Diffeomorphism_pushed_boundary_lower_halfspace}, it pushes (parallel to the $x_d$-axis in $\RR^{d+1}$) the open portion of the sphere, $\{(t,x) \in \RR^{d+1}:x_d<3/4\}\cap\partial E\less\{O\}$, down below the hyperplane, $\{(t,x) \in \RR^{d+1}:x_d=0\}$. See the illustration on the right in Figure \ref{fig:parabolic_domain_interior_ball_and_deformation}.

Referring to the illustration on the left in Figure \ref{fig:parabolic_domain_interior_ball_and_deformation}, set
$$
D := \left\{(t,x) \in \RR^{d+1}: t^2 + |x'|^2 + x_d^2 \leq 1 \hbox{ and } 0\leq x_d \leq 3/4\right\} \subset\sQ.
$$
Henceforth, after applying the preceding diffeomorphism, and denoting $\Phi(D)=\widetilde D \subset \widetilde\sQ$, we may further assume, without loss of generality, that
$$
\widetilde D^+ := \widetilde D \cap \left\{(t,x)\in\RR^{d+1}: x_d>0\right\},
$$
has the property
$$
\left\{(t,x)\in\RR^{d+1}:x_d > 0\right\}\cap\partial\widetilde D^+ \Subset sQ.
$$
We also set
$$
\widetilde D^- := \widetilde D \cap \left\{(t,x)\in\RR^{d+1}: x_d<0\right\}.
$$
For convenience, we shall relabel $\widetilde\sQ=\Phi(\sQ) $ as $\sQ$ when this causes no confusion.
\end{step}

\begin{step}[Properties of the derivatives of the diffeomorphism]
\label{step:Diffeomorphism_derivative_properties}
We shall need to examine the effect of the diffeomorphism, $\Phi$, on the coefficients of $L$ and that it can be chosen to preserve the property \eqref{eq:b_perp_positive_boundary_parabolic} of the vector field, $b$, on a small enough neighborhood in $\mydirac_0\!\sQ$ of the point $O \in \mydirac_0\!\sQ$. Since our argument is be purely local, it suffices to define $\Phi$ on a neighborhood of the origin in $\RR^{d+1}$.

\begin{claim}
\label{claim:Diffeomorphism_derivative_properties}
The diffeomorphism $\Phi$ in \eqref{eq:Diffeomorphism} may be chosen so that its differential obeys
$$
\frac{\partial y_d}{\partial t}(O) = 0,
\quad
\frac{\partial y_d}{\partial x_i}(O) = 0 \quad\hbox{for } 1\leq i \leq d-1,
\quad\hbox{and}\quad
\frac{\partial y_d}{\partial x_d}(O) = 1.
$$
\end{claim}

\begin{proof}
We write
$$
y_d(t,x',x_d) = x_d - G(t,x') - x_d^2, \quad\forall\, (t,x',x_d) \in B_1(O)\subset \RR^{d+1},
$$
where $B_1(O)$ is the open unit ball with center at the origin, and
\begin{align*}
G(t,x') &:= 1 - \sqrt{1 - t^2 - |x'|^2},
\\
&\qquad\hbox{for $(t',x')\in \RR^d$ obeying } t^2 + |x'|^2 < 1,
\end{align*}
so that $G(t,x')=0$ when $x_d = G(t,x')$ and $(t,x) \in \{(t,x)\in\RR^{d+1}:0\leq x_d\leq 1\}\cap\partial E$. We compute that
\begin{gather*}
\frac{\partial y_d}{\partial t} = -\frac{\partial G}{\partial t},
\quad
\frac{\partial y_d}{\partial x_i} = -\frac{\partial G}{\partial x_i} \quad\hbox{for } 1\leq i \leq d-1,
\quad
\frac{\partial y_d}{\partial x_d} = 1 - 2x_d,
\\
\frac{\partial^2 y_d}{\partial x_i\partial x_j} = -\frac{\partial^2 G}{\partial x_i\partial x_j}
\quad\hbox{for } 1\leq i,j \leq d-1,\quad
\frac{\partial^2 y_d}{\partial x_i\partial x_d} = 0 \quad\hbox{for } 1\leq i \leq d-1,
\quad
\frac{\partial^2 y_d}{\partial x_d^2} = -2,
\end{gather*}
where
\begin{align*}
\frac{\partial G}{\partial t} &= \frac{t}{\left(1 - t^2 - |x'|^2\right)^{1/2}},
\\
\frac{\partial G}{\partial x_d} &= 0,
\\
\frac{\partial G}{\partial x_i} &= \frac{x_i}{\left(1 - t^2 - |x'|^2\right)^{1/2}},
\quad\hbox{for } 1\leq i\leq d-1,
\\
\frac{\partial^2 G}{\partial x_i\partial x_j} &= \frac{\delta_{ij}}{\left(1 - t^2 - |x'|^2\right)^{1/2}}
+ \frac{x_ix_j}{\left(1 - t^2 - |x'|^2\right)^{3/2}},
\\
&\qquad\hbox{for } 1\leq i,j \leq d-1,
\\
\frac{\partial^2 G}{\partial x_i\partial x_d} &= 0, \quad\hbox{for } 1\leq i\leq d-1.
\end{align*}
The properties of the derivatives of $y_d$ now follow by inspection.
\end{proof}

Since $a(O)=0$ by definition \eqref{eq:Degeneracy_locus_parabolic} of $\mydirac_0\!\sQ$ and the fact that $O\in \mydirac_0\!\sQ$ and our hypothesis \eqref{eq:a_continuous_degenerate_boundary} that $a\in C(\mydirac_0\!\sQ;\sS^+(d))$, it remains true that $a\circ\Phi^{-1}(O)=0$ and, moreover, that $O$ has an open neighborhood in $\partial\widetilde\sQ$ on which $a\circ\Phi^{-1}=0$. Similarly, because $n_0(P)=0$ for all points $P\in\partial\sQ$ in an open neighborhood in $\partial\sQ$ of the origin $O$ (by definition \eqref{eq:Degeneracy_locus_parabolic} of $\mydirac_0\!\sQ$), it remains true\footnote{Locally, we have $Q=(T',T)\times\sO$ for some open neighborhood $\sO\subset\RR^d$ of the origin and $\mydirac_0\!\sQ=(T',T)\times\partial_0\sO$ with inward-pointing normal vectors $n_0(P)e_0+\vec n(P)=\vec n(P)$ for all $P\in\mydirac_0\!\sQ$. Then $\widetilde\sQ=(T',T)\times\widetilde\sO$ and again all points $P\in\mydirac_0\!\widetilde\sQ=(T',T)\times\partial_0\widetilde\sO$ have inward-pointing normal vectors $n_0(P)e_0+\vec n(P)=\vec n(P)$.}
that $n_0(P)=0$ for all points $P\in\partial\widetilde\sQ$ in an open neighborhood in $\partial\widetilde\sQ$ of the origin $O$.

Because $\mydirac_0\!\widetilde\sQ$ is tangent at the origin to the hyperplane $\{(t,x)\in\RR^{d+1}:x_d=0\}$, we have
\begin{equation}
\label{eq:Normal_vector_origin}
\vec n(O)=e_d.
\end{equation}
See Figure \ref{fig:parabolic_domain_interior_ball_and_deformation}.
\end{step}

\begin{step}[Impact of the diffeomorphism on the open condition \eqref{eq:b_perp_positive_boundary_parabolic}]
\label{step:Impact_diffeomorphism_open_condition}
Writing $u(t,x',x_d) = v(t,x',y_d)$ and using $y_d=y_d(t,x',x_d)$, we obtain
$$
u_t = v_t + v_{y_d}\frac{\partial y_d}{\partial t}, \quad u_{x_i} = v_{y_i} + v_{y_d}\frac{\partial y_d}{\partial x_i}, \quad\hbox{for } 1\leq i \leq d-1, \quad u_{x_d} = v_{y_d}\frac{\partial y_d}{\partial x_d},
$$
and
\begin{align*}
u_{x_ix_j} &= v_{y_iy_j} + v_{y_iy_d}\frac{\partial y_d}{\partial x_j} + v_{y_jy_d}\frac{\partial y_d}{\partial x_i} + v_{y_dy_d}\frac{\partial y_d}{\partial x_j}\frac{\partial y_d}{\partial x_i} + v_{y_d}\frac{\partial^2 y_d}{\partial x_i\partial x_j}, \quad\hbox{for } 1\leq i,j \leq d-1,
\\
u_{x_ix_d} &= v_{y_iy_d}\frac{\partial y_d}{\partial x_d} + v_{y_dy_d}\frac{\partial y_d}{\partial x_i}\frac{\partial y_d}{\partial x_d}
+ v_{y_d}\frac{\partial^2 y_d}{\partial x_i\partial x_d}, \quad\hbox{for } 1\leq i \leq d-1,
\\
u_{x_dx_d} &= v_{y_dy_d}\left(\frac{\partial y_d}{\partial x_d}\right)^2 + v_{y_d}\frac{\partial^2 y_d}{\partial x_d^2}.
\end{align*}
Substituting the preceding derivative formulae into the expression \eqref{eq:Generator_parabolic} for $Lu$ and writing $Lu=\widetilde Lv$, we obtain
$$
\widetilde Lv = -v_t -\tr(\tilde aD^2v) - \langle \tilde b, Dv\rangle + \tilde cv,
$$
where (suppressing the arguments $(t,x',y_d)$ on the left and $(t,x',x_d)$ on the right),
\begin{align*}
\tilde a^{ij} &= a^{ij}, \quad\hbox{for } 1\leq i, j \leq d-1,
\\
\tilde a^{id} &= a^{id} + \sum_{j=1}^{d-1}a^{ij}\frac{\partial y_d}{\partial x_j}, \quad\hbox{for } 1\leq i \leq d-1,
\\
\tilde a^{dd} &= a^{dd}\left(\frac{\partial y_d}{\partial x_d}\right)^2 +  \sum_{i,j=1}^{d-1}a^{ij}\frac{\partial y_d}{\partial x_j}\frac{\partial y_d}{\partial x_i},
\\
\tilde b^i &= b^i, \quad\hbox{for } 1\leq i \leq d-1,
\\
\tilde b^d &= b^d + \sum_{i,j=1}^{d-1}a^{ij}\frac{\partial^2 y_d}{\partial x_i\partial x_j}
+ a^{dd}\frac{\partial^2 y_d}{\partial x_d^2}
+ \sum_{i=1}^{d-1}b^i\frac{\partial y_d}{\partial x_i} + \frac{\partial y_d}{\partial t},
\\
\tilde c &= c.
\end{align*}
It is clear that the required properties of the coefficients of $L$, namely  \eqref{eq:a_locally_strictly_parabolic_interior_domain} (and symmetry of $a$), \eqref{eq:c_nonnegative_domain}, \eqref{eq:bc_locally_bounded}, \eqref{eq:a_continuous_degenerate_boundary}, \eqref{eq:b_continuous_degenerate_boundary} are preserved by $\Phi$ and, also, that $v = u\circ\Phi^{-1}$ belongs to $C^2_s(\underline\sQ)$ by Definition \ref{defn:Second-order_boundary_regularity}.

By Claim \ref{claim:Diffeomorphism_derivative_properties}, we have
$$
\frac{\partial y_d}{\partial t}(O) = 0 \quad\hbox{and}\quad \frac{\partial y_d}{\partial x_i}(O) = 0 \quad\hbox{for } 1\leq i \leq d-1,
$$
and because $a^{ij}(O)=0$ by \eqref{eq:Degeneracy_locus_parabolic} (since $O\in\mydirac_0\!\sQ$), we obtain
$$
\sum_{i,j=1}^{d-1}a^{ij}(O)\frac{\partial^2 y_d}{\partial x_i\partial x_j}(O)
+ a^{dd}(O)\frac{\partial^2 y_d}{\partial x_d^2}(O)
+ \sum_{i=1}^{d-1}b^i(O)\frac{\partial y_d}{\partial x_i}(O) + \frac{\partial y_d}{\partial t}(O) = 0,
$$
and because $b^d(O)>0$ by \eqref{eq:b_perp_positive_boundary_parabolic} and \eqref{eq:Normal_vector_origin}, we must have
\begin{equation}
\label{eq:Positive_bd_boundarypoint}
\tilde b^d(O) > 0.
\end{equation}
We relabel $u$ and the coefficients of $L$ in the statement of Proposition \ref{prop:Friedman_lemma_2-2} as $u$ and $a,b,c$, respectively, after applying the diffeomorphism\footnote{Compare the proofs of \cite[Theorem 6.3.4]{Evans}, \cite[Lemma 6.5 or Theorem 8.12]{GilbargTrudinger}, \cite[Lemma 6.2.1]{Krylov_LecturesHolder} for similar arguments.}, $\Phi$.
\end{step}

\begin{step}[The barrier function and its properties]
\label{step:Barrier_when _positive_bc_function_holds}
We choose
\begin{equation}
\label{eq:Hopf_barrier_function}
h(t,x) := x_d, \quad (t,x) \in\RR^{d+1}.
\end{equation}
Clearly,
\begin{equation}
\label{eq:Barrier_function_signs}
h(t,x) \begin{cases} > 0 &\hbox{if } x_d>0, \\ = 0 &\hbox{if } x_d=0, \\ < 0 &\hbox{if } x_d<0. \end{cases}
\end{equation}
We observe that
$$
Lh = -b^dh_{x_d} + ch = -b^d + cx_d \quad\hbox{on }\sQ .
$$
Since $c$ is locally bounded on $\underline\sQ$ by \eqref{eq:c_locally_bounded}, and $b^d(O)>0$ by \eqref{eq:Positive_bd_boundarypoint} (and as $\tilde b^d$ was relabeled $b^d$), and $b^d$ is continuous at $O\in\underline\sQ$ by \eqref{eq:b_continuous_degenerate_boundary} (and the definition of $\tilde b^d$ in Step \ref{step:Impact_diffeomorphism_open_condition}), we may suppose that
\begin{equation}
\label{eq:Lbarrierfunction_negative}
Lh < 0 \quad\hbox{on } B_\rho(O),
\end{equation}
for an open ball $B_\rho(O)$ centered at the origin and small enough radius $\rho>0$.
\end{step}

\begin{step}[Application of the barrier function and the special case of the strong maximum principle]
\label{step:Barrier_contradiction}
We now argue as in the remainder of the proof of \cite[Lemma 2.2]{FriedmanPDE}. Recalling that $\bar P=(\bar t,\bar x)=O\in\RR^{d+1}$ and $x^*=(0,x_d^*)\in\{x\in\RR^d:x_d>0\}$, as a result of the application of our diffeomorphism of $\RR^{d+1}$ (in fact, $x_d^*=1$), choose $\rho>0$ small enough that
\begin{align*}
B_\rho(O)\cap \underline\sQ &\Subset \widetilde N,
\\
B_\rho(O)\cap \{(t,x)\in\RR^{d+1}:x_d>0\} &\Subset \widetilde D\cap \{(t,x)\in\RR^{d+1}:x_d\geq 0\}.
\end{align*}
See the illustration on the right in Figure \ref{fig:parabolic_domain_interior_ball_and_deformation}. Note that
$$
\mydirac_0(B_\rho(O)\cap\sQ ) = B_\rho(O)\cap\mydirac_0\!\sQ,
$$
and recall that, in our version of the strong maximum principle, we emphasize that points in $\mydirac_0\!\sQ$ behave in the same way as points in the \emph{interior}, $\sQ$. (By our assumption in this case, $\mydirac_0(B_\rho(O)\cap\sQ )$ contains the point $\bar P=O$ and thus is non-empty.) Write
$$
\partial(B_\rho(O)\cap\sQ) - \mydirac_0(B_\rho(O)\cap\sQ) = \bar C_1\cup C_2,
$$
where
$$
C_1 := \widetilde D^+ \cap \partial B_\rho(O),
$$
and $C_2$ is the complement of $\bar C_1$. See the illustration on the right in Figure \ref{fig:parabolic_domain_interior_ball_and_deformation}.

Clearly, \eqref{eq:u_strictly_less_M_punctured_ellipse} implies (after applying the diffeomorphism, $\Phi$) that
\begin{equation}
\label{eq:u_strictly_less_M_punctured_quarter_disk_image}
u < M \quad\hbox{on } \widetilde D\less\{O\},
\end{equation}
and because $\bar C_1 \subset \widetilde D\less\{O\}$, we have
\begin{equation}
\label{eq:u_less_M_C1}
u < M - \delta \quad\hbox{on } \bar C_1,
\end{equation}
for some constant $\delta>0$. Consider the function,
\begin{equation}
\label{eq:Defn_v}
v := u + \eps h \quad\hbox{on } \sQ ,
\end{equation}
for a constant $\eps>0$ to be chosen and observe that, because $Lu\leq 0$ on $\sQ$ (by hypothesis) and $Lh<0$ on $B_\rho(O)$ (by \eqref{eq:Lbarrierfunction_negative}),
\begin{equation}
\label{eq:Lv_negative_ball_intersect_Q}
Lv = Lu + \eps Lh < 0 \quad\hbox{on } B_\rho(O)\cap\underline\sQ.
\end{equation}
If $\eps>0$ is sufficiently small, then \eqref{eq:u_less_M_C1} and the fact that $h$ is continuous on $\RR^{d+1}$ imply that
$$
v < M \quad\hbox{on } \bar C_1.
$$
We also have $u\leq M$ on $C_2$ (since $u\leq M$ on $\underline\sQ$) and $h<0$ on $C_2$ (since $C_2\subset \{(t,x)\in\RR^{d+1}:x_d<0\}$ by \eqref{eq:Diffeomorphism_pushed_boundary_lower_halfspace} and $h<0$ on $\{(t,x)\in\RR^{d+1}:x_d<0\}$ by \eqref{eq:Barrier_function_signs}). Hence, we find that $v<M$ on $C_2$. Thus,
$$
v<M \quad\hbox{on } \bar C_1\cup C_2 = \partial(B_\rho(O)\cap\sQ) - \mydirac_0(B_\rho(O)\cap\sQ ),
$$
and $v(O)=u(O)=M$. Consequently, the function $v$ assumes a positive maximum in $B_\rho(O)\cap \underline\sQ$ while $Lv <0$ on $B_\rho(O)\cap\underline\sQ$, and this contradicts Lemma \ref{lem:Friedman_lemma_2-1}.
\end{step}
Therefore, $\bar x = x^*$ and this completes the proof of Proposition \ref{prop:Friedman_lemma_2-2}.
\end{proof}

Although we shall not use it in our proof of the strong maximum principle, a short addition to the proof of Proposition \ref{prop:Friedman_lemma_2-2} yields a boundary-degenerate parabolic analogue of the classical Hopf boundary point lemma for parabolic operators \cite[Theorem 2]{Friedman_1958}, \cite[Theorem 2 or $2'$]{Hill_1971}, \cite[Theorem 1]{Kusano_1963}, \cite[Lemma 2.8]{Lieberman}, \cite[Theorem 3.4]{Nazarov_2012}, and \cite[\S 3]{Wang_1990}. We first recall the statements of the classical weak maximum principle for $L$-subharmonic functions in $C^2(\sQ)$ or $W^{2,d+1}_{\loc}(\sQ)$, where a Dirichlet boundary condition is imposed along the full parabolic boundary, $\mydirac\!\sQ$.

\begin{thm}[Classical weak maximum principle for $L$-subharmonic functions in $C^2(\sQ)$]
\label{thm:Classical_weak_maximum_principle_C2_parabolic_relaxed}
\cite[Lemma 2.3]{Lieberman}
Let $\sQ\subset\RR^{d+1}$ be a bounded, open subset and $L$ in \eqref{eq:Generator_parabolic} with coefficients obeying \eqref{eq:a_nonnegative} and \eqref{eq:c_bounded_below}. Suppose $u\in C^2(\sQ)$ and $\sup_\sQ u < \infty$. If $Lu \leq 0$ on $\sQ$ and $u_*\leq 0$ on $\mydirac\!\sQ$, then $u\leq 0$ on $\sQ$.
\end{thm}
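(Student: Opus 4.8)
The plan is to follow the classical scheme for the weak maximum principle, namely the proofs of Lieberman \cite[Lemma 2.3]{Lieberman} and Krylov \cite[Theorem 8.1.2]{Krylov_LecturesHolder}, adapted to our terminal-time convention and to the hypotheses \eqref{eq:a_nonnegative} and \eqref{eq:c_bounded_below} in force. First I would dispose of the sign of $c$. Because $\sQ$ is bounded it obeys \eqref{eq:Domain_finite_upper_time}, say $\sQ\subset(-\infty,T)\times\RR^d$, and the substitution $u=:e^{-\lambda t}w$ with $\lambda:=K_0+1$ converts $Lu\leq 0$ into $Lw+\lambda w\leq 0$, exactly as in the proof of Lemma \ref{lem:Parabolic_weak_maximum_principle_apriori_estimates_c_bounded_below}; the zeroth-order coefficient of the resulting operator is $c+\lambda\geq 1$ on $\sQ$ by \eqref{eq:c_bounded_below}. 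Since $e^{-\lambda t}$ is bounded between positive constants on the bounded set $\sQ$, the hypotheses $\sup_\sQ u<\infty$ and the boundary bound pass to $w$, while $w\leq 0$ on $\sQ$ is equivalent to $u\leq 0$. Hence I may assume without loss of generality that $c\geq 1$ on $\sQ$.

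Arguing by contradiction, suppose $m:=\sup_\sQ u>0$ and choose a maximizing sequence $P_k\in\sQ$ with $u(P_k)\to m$. As $\sQ$ is bounded, after passing to a subsequence $P_k\to P^0\in\bar\sQ$, and there are three possibilities. If $P^0\in\sQ$, then continuity of $u$ on the open set $\sQ$ gives $u(P^0)=m$, so $u$ attains an interior maximum at $P^0$, where $u_t(P^0)=0$, $Du(P^0)=0$ and $D^2u(P^0)\leq 0$; since $a(P^0)\in\sS^+(d)$ by \eqref{eq:a_nonnegative} we have $-\tr(a(P^0)D^2u(P^0))\geq 0$, and the $b$-term drops out, so $Lu(P^0)=-\tr(a(P^0)D^2u(P^0))+c(P^0)m\geq c(P^0)m\geq m>0$, contradicting $Lu\leq 0$. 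If $P^0\in\mydirac\!\sQ$, then the relevant (upper) envelope controls $u$ from above near $\mydirac\!\sQ$, so $u^*(P^0)\geq\limsup_k u(P_k)=m>0$, contradicting the prescribed boundary bound on $\mydirac\!\sQ$.

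The remaining case, $P^0\in\partial\sQ\less\mydirac\!\sQ$, is the crux and is where the general, possibly non-cylindrical, statement is genuinely more delicate than the cylindrical version of Krylov \cite[Theorem 8.1.2]{Krylov_LecturesHolder}. At such a point no boundary datum is prescribed, and because $u$ is only of class $C^2$ on the open set $\sQ$ with merely semicontinuous boundary behaviour, the second-derivative test used above is unavailable. The resolution exploits the causal structure built into Definition \ref{defn:Parabolic_boundary}: a point of $\partial\sQ\less\mydirac\!\sQ$ carries a parabolic cylinder $Q_{\eps_0}(P^0)$ disjoint from $\sQ$, so $\sQ$ does not accumulate at $P^0$ along the admissible time direction and the maximizing sequence necessarily enters from the interior side in time. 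I would therefore pass to the near-maximal point of extremal admissible time, use the disjoint cylinder to realize a genuine one-sided time derivative there, and combine this with the strict subsolution $u-\eps$ (which obeys $L(u-\eps)\leq-\eps c\leq-\eps<0$ since $c\geq 1$) to force that point into $\sQ$ or $\mydirac\!\sQ$; letting $\eps\downarrow 0$ then returns to the two cases already excluded. I expect this last step to be the main obstacle; it is precisely the content carried out in full in \cite[Lemma 2.3]{Lieberman}, whose argument transfers once the time orientation is reversed to match our terminal-time convention.
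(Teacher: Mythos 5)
Your proposal is correct and is essentially the paper's own route: the paper proves this theorem simply by citing \cite[Lemma 2.3]{Lieberman}, and your argument --- the substitution $u = e^{-\lambda t}w$ with $\lambda = K_0+1$ to reduce \eqref{eq:c_bounded_below} to $c\geq 1$ (exactly the device of Lemma \ref{lem:Parabolic_weak_maximum_principle_apriori_estimates_c_bounded_below}), followed by the maximizing-sequence trichotomy with the interior second-derivative test, the envelope contradiction on $\mydirac\!\sQ$, and the case $P^0\in\partial\sQ\less\mydirac\!\sQ$ deferred to Lieberman's argument via the empty cylinder $Q_{\eps_0}(P^0)\cap\sQ=\emptyset$ --- is precisely a reconstruction of that citation, and your deferral of the crux case is the same deferral the paper itself makes in the proof of Theorem \ref{thm:Weak_maximum_principle_C2s_bounded_domain}. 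One small point: your boundary case produces $u^*(P^0)\geq m>0$ and so needs the \emph{upper}-envelope hypothesis $u^*\leq 0$ on $\mydirac\!\sQ$, whereas the theorem as printed says $u_*\leq 0$; this is evidently a typographical slip in the statement (compare Definition \ref{defn:Weak_maximum_principle_property_parabolic} and Theorem \ref{thm:Weak_maximum_principle_C2s_bounded_domain}, both of which use $u^*$, and Lieberman's own $\limsup$ condition), so your reading is the intended one.
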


\begin{thm}[Classical weak maximum principle for $L$-subharmonic functions in $W^{2,d+1}_{\loc}(\sQ)$]
\label{thm:Classical_weak_maximum_principle_W2d+1_parabolic_relaxed}
Assume the hypotheses of Theorem \ref{thm:Classical_weak_maximum_principle_C2_parabolic_relaxed} on $\sQ$ and $L$, except that the coefficients of $L$ are now required to be measurable. Suppose $u\in W^{2,d+1}_{\loc}(\sQ)$ and $\sup_\sQ u < \infty$. If $Lu \leq 0$ a.e. on $\sQ$ and $u_*\leq 0$ on $\mydirac\!\sQ$, then $u\leq 0$ on $\sQ$.
\end{thm}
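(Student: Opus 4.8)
The plan is to mirror the proof of the $C^2$ version, Theorem~\ref{thm:Classical_weak_maximum_principle_C2_parabolic_relaxed}, but to replace the pointwise second-derivative calculus at an interior maximum---which is unavailable when $u$ is merely of class $W^{2,d+1}_{\loc}(\sQ)$---by the parabolic Aleksandrov--Bakelman--Pucci--Krylov--Tso estimate, exactly as in Lieberman's passage from \cite[Lemma 2.3]{Lieberman} to \cite[Corollary 7.4]{Lieberman}. First I would record that the parabolic Sobolev embedding (\S\ref{subsec:Parabolic_sobolev_embedding}) gives $u\in C(\sQ)$, so that $u$ has genuine interior values and the hypotheses $\sup_\sQ u<\infty$ and $u_*\le 0$ on $\mydirac\!\sQ$ are meaningful. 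Since $\sQ$ is bounded we may arrange $\sQ\subset(-\infty,T)\times\RR^d$, and then the exponential substitution $u=e^{-\lambda t}w$ with $\lambda=K_0+1$, as in the proof of Lemma~\ref{lem:Parabolic_weak_maximum_principle_apriori_estimates_c_bounded_below}, converts the hypothesis \eqref{eq:c_bounded_below} into the condition $c\ge 0$ (indeed $c\ge 1$) for the transformed operator while preserving the sign of $Lu$ and of the boundary data; thus we may assume $c\ge 0$ a.e.\ on $\sQ$.

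The heart of the argument is the parabolic ABP estimate: for $v\in W^{2,d+1}_{\loc}(\sQ)\cap C(\bar\sQ)$ on a bounded domain, the excess $\sup_\sQ v-\sup_{\mydirac\!\sQ}v^+$ is bounded by a constant multiple of the $L^{d+1}$-norm, over the upper contact set $\Gamma^+$ (the set where the graph of $v$ admits a supporting backward paraboloid from above), of the ratio of the relevant part of the source $Lv$ to $(\det a)^{1/d}$. On $\Gamma^+$ one has $v_t\ge 0$ and $D^2v\le 0$ in the a.e.\ sense, hence $\tr(aD^2v)\le 0$; combined with $Lv\le 0$ and $c\ge 0$ (and $v>0$ on the contact set, which is where a putative positive maximum is approached), the \emph{bad part} of the source vanishes there. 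Consequently the right-hand side of the ABP estimate is zero and we conclude $\sup_\sQ u\le \sup_{\mydirac\!\sQ}u^+\le 0$, i.e.\ $u\le 0$ on $\sQ$.

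A key point---and the reason no interior strict parabolicity \eqref{eq:a_locally_strictly_parabolic_interior_domain} is needed---is that the apparent obstruction from the factor $(\det a)^{1/d}$ in the denominator is harmless: the subsolution hypothesis $Lu\le 0$ forces the numerator (the relevant part of the source) to vanish on the contact set, so the quotient is identically $0$ even on the degeneracy locus of $a$. I therefore expect the main technical obstacle to be twofold. First, one must pass from the control $u_*\le 0$ on $\mydirac\!\sQ$, together with $u\in C(\sQ)$ and $\sup_\sQ u<\infty$, to a form to which the ABP estimate applies, since $u$ need not be continuous up to $\mydirac\!\sQ$; this is handled by the usual exhaustion of $\sQ$ by subdomains and a limiting argument, as in the classical treatment. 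Second, the first-order term $\langle b,Du\rangle$, which drops out for free at an interior maximum in the $C^2$ proof because $Du=0$ there, reappears in the ABP approach and must be absorbed into the estimate; this requires the structural integrability of $b$ relative to $(\det a)^{1/d}$ implicit in \cite[Theorem 7.1 and Corollary 7.4]{Lieberman}, and checking that our standing hypotheses suffice is the delicate bookkeeping in the proof.
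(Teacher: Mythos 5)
Your proposal is correct and takes essentially the same route as the paper, whose proof is exactly this reduction: it invokes the classical parabolic (ABP/Krylov--Tso based) weak maximum principle of \cite[Corollary 7.4]{Lieberman} for $L$-subharmonic functions in $W^{2,d+1}_{\loc}(\sQ)\cap C(\bar\sQ)$ with full Dirichlet comparison along $\mydirac\!\sQ$, combined with the a priori estimates of Proposition \ref{prop:Parabolic_weak_maximum_principle_apriori_estimates} and the method of proof of \cite[Theorem 2.18]{Feehan_perturbationlocalmaxima} to relax $u\in C(\bar\sQ)$ to $\sup_\sQ u<\infty$ with $u_*\leq 0$ on $\mydirac\!\sQ$. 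Your exponential substitution for \eqref{eq:c_bounded_below}, your unpacking of the contact-set mechanism (with the drift structure condition on $|b|$ relative to $\det a$ correctly identified as part of Lieberman's hypotheses rather than something to re-derive), and your exhaustion argument are precisely the ingredients the paper's one-sentence citation bundles together.
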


\begin{proof}
This follows from the classical weak maximum principle \cite[Corollary 7.4]{Lieberman} for a full Dirichlet boundary condition along $\mydirac\!\sQ$ and $L$-subharmonic functions in $W^{2,d+1}_{\loc}(\sQ)$ and our a priori weak maximum principle estimates,
Proposition \ref{prop:Parabolic_weak_maximum_principle_apriori_estimates}, using the method of proof of \cite[Theorem 2.18]{Feehan_perturbationlocalmaxima}, the elliptic analogue of Theorem \ref{thm:Classical_weak_maximum_principle_C2_parabolic_relaxed}.
\end{proof}

\begin{lem}[Hopf boundary point lemma for a degenerate-parabolic linear second-order differential operator]
\label{lem:Degenerate_hopf_lemma_parabolic}
Let $\sQ\subset\RR^{d+1}$ be an open subset and assume that the coefficients of $L$ in \eqref{eq:Generator_parabolic} obey \eqref{eq:a_locally_strictly_parabolic_interior_domain}, \eqref{eq:b_perp_positive_boundary_parabolic}\footnote{It is enough for the proof of Lemma \ref{lem:Degenerate_hopf_lemma_parabolic} that $b^\perp(\bar P) > 0$ in the case $\bar P \in \mydirac_0\!\sQ$.}, \eqref{eq:c_nonnegative_domain}, \eqref{eq:bc_locally_bounded}, \eqref{eq:a_continuous_degenerate_boundary}, and \eqref{eq:b_continuous_degenerate_boundary}. Suppose that $\underline\sQ$ contains $\bar B$, the closure  of an open ball,
$$
B := \left\{(t,x)\in\RR^{d+1}: |x-x^*|^2 + |t-t^*|^2 < R^2\right\} \subset\sQ,
$$
and $\bar P := (\bar t,\bar x)\in\partial B$ with $\bar x\neq x^*$. Suppose that $u\in C^2(\sQ) $ or $u\in W^{2,d+1}_{\loc}(\sQ) $ and that $u$ obeys
$$
Lu\leq 0 \quad\hbox{(a.e.) on } \sQ ,
$$
and that $u$ satisfies the conditions,
\begin{enumerate}
\renewcommand{\theenumi}{\roman{enumi}}
\item $u$ is continuous at $\bar P$;
\item\label{item:xzero_strict_local_max_parabolic} $u(\bar P) > u(P)$, for all $P\in B$;
\item $D_{\vec n} u(\bar P)$ exists,
\end{enumerate}
where $D_{\vec n} u(\bar P)$ is the derivative of $u$ at $\bar P$ in the direction of the \emph{inward}-pointing unit normal vector, $\vec n(\bar P)$, at $(\bar P)\in\partial B$. Then the following hold:
\begin{enumerate}
\item\label{item:Hopf_c_zero_parabolic} If $c=0$ on $\sQ$, then $D_n u(\bar P)$ obeys the strict inequality,
\begin{equation}
\label{eq:Positive_inward_normal_derivative_parabolic}
D_{\vec n} u(\bar P) < 0.
\end{equation}
\item\label{item:Hopf_c_geq_zero_parabolic} If $c\geq 0$ on $\sQ$ and $u(\bar P)\geq 0$, then \eqref{eq:Positive_inward_normal_derivative_parabolic} holds.
\item\label{item:Hopf_c_no_sign_parabolic} If $u(\bar P)=0$, then \eqref{eq:Positive_inward_normal_derivative_parabolic} holds irrespective of the sign of $c$.
\end{enumerate}
\end{lem}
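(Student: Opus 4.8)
The plan is to deduce Lemma~\ref{lem:Degenerate_hopf_lemma_parabolic} from the barrier construction already carried out in the proof of Proposition~\ref{prop:Friedman_lemma_2-2}, adding only the step that converts the comparison inequality into a sign for the normal derivative. First I would reduce the three conclusions to a single one. Replacing $u$ by $w := u - u(\bar P)$ leaves $D_{\vec n}u(\bar P) = D_{\vec n}w(\bar P)$ unchanged and gives $w(\bar P)=0$ with $w<0$ on $B$ by hypothesis \eqref{item:xzero_strict_local_max_parabolic}. Since $L$ annihilates constants up to the zeroth-order term, $Lw = Lu - c\,u(\bar P)$. In case \eqref{item:Hopf_c_zero_parabolic} ($c=0$) this is just $Lu\leq 0$; in case \eqref{item:Hopf_c_geq_zero_parabolic} ($c\geq 0$, $u(\bar P)\geq 0$) we have $c\,u(\bar P)\geq 0$, so again $Lw\leq 0$; and in case \eqref{item:Hopf_c_no_sign_parabolic} ($u(\bar P)=0$) one has $w=u\leq 0$ on $\bar B$ by continuity and \eqref{item:xzero_strict_local_max_parabolic}, so replacing $c$ by $c^+:=c\vee 0$ only decreases $cu$ on $\{u\leq 0\}$ and reduces the claim to case \eqref{item:Hopf_c_geq_zero_parabolic} for the operator with nonnegative zeroth-order coefficient $c^+$. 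Thus it suffices to prove: if $c\geq 0$, $Lw\leq 0$, $w(\bar P)=0$, and $w<0$ on $B$, then $D_{\vec n}w(\bar P)<0$, which is \eqref{eq:Positive_inward_normal_derivative_parabolic}.

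When $\bar P\in\sQ$ the operator is locally strictly parabolic near $\bar P$ by \eqref{eq:a_locally_strictly_parabolic_interior_domain}, and the conclusion is the classical parabolic Hopf boundary point lemma \cite[Lemma 2.2]{FriedmanPDE}; no degenerate-boundary input is needed, so I would simply invoke it. The substance is the case $\bar P\in\mydirac_0\!\sQ$. Here I would run verbatim the geometric normalization, the flattening diffeomorphism $\Phi$, and the barrier construction from the proof of Proposition~\ref{prop:Friedman_lemma_2-2}: after translating $\bar P$ to the origin, rescaling so that $B$ becomes the unit ball and, since $n_0=0$ on $\mydirac_0\!\sQ$, centering it at $P^*=(0,\dots,0,1)$ with $\vec n(O)=e_d$; flattening $\mydirac_0\!\sQ$ and pushing the lower spherical cap into $\{x_d<0\}$; and taking the linear barrier $h(t,x)=x_d$, for which $\tilde b^d(O)=b^\perp(O)>0$ by \eqref{eq:b_perp_positive_boundary_parabolic} yields $Lh<0$ on a small ball $B_\rho(O)$.

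The new step is to combine $w$ with the barrier rather than to derive a contradiction. Set $z:=w+\eps h$ on $B_\rho(O)\cap\underline\sQ$; since $Lw\leq 0$ and $Lh<0$, we have $Lz<0$ there. On the non-degenerate part of $\partial(B_\rho(O)\cap\sQ)$ I would check $z<0$ for $\eps$ small exactly as in Proposition~\ref{prop:Friedman_lemma_2-2}: on the spherical cap $C_1$ inside the ball one has $w\leq-\delta$, so $z\leq-\delta+\eps\sup|h|<0$; on the pushed-down part $C_2\subset\{x_d<0\}$ one has $h<0$ and $w\leq 0$, the latter now coming only from the \emph{local} bound $u\leq u(\bar P)$ on $\bar B$ together with $C_2\subset\Phi(\bar B)$, rather than from a global maximum as in Proposition~\ref{prop:Friedman_lemma_2-2}. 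The flat bottom lies in $\mydirac_0\!\sQ$ and is treated as interior: if $z$ had a positive maximum over the closed region it would occur in $\underline{B_\rho(O)\cap\sQ}$, contradicting Lemma~\ref{lem:Friedman_lemma_2-1} because $Lz<0$ there. Hence $z\leq 0$, i.e. $w\leq-\eps x_d$ for $x_d\geq 0$ near $O$; since $w(O)=0$ this forces $w_{x_d}(O)\leq-\eps$, and as $\Phi$ satisfies $\partial y_d/\partial x_d(O)=1$ the inward normal derivative is preserved, giving $D_{\vec n}u(\bar P)=D_{\vec n}w(\bar P)<0$.

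The main obstacle, and the point where I would spend the most care, is the degenerate case $\bar P\in\mydirac_0\!\sQ$. Two features are genuinely new relative to Proposition~\ref{prop:Friedman_lemma_2-2}. First, the comparison region's flat bottom sits on $\mydirac_0\!\sQ$ (outside $\bar B$ except at $O$), so applying Lemma~\ref{lem:Friedman_lemma_2-1} there requires the second-order boundary regularity of Definition~\ref{defn:Second-order_boundary_regularity} to hold up to $\mydirac_0\!\sQ$ near $\bar P$; that is, the case $\bar P\in\mydirac_0\!\sQ$ must be read with $u\in C^2_s$ near $\bar P$, not merely $u\in C^2(\sQ)$ or $W^{2,d+1}_{\loc}(\sQ)$. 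Second, where Proposition~\ref{prop:Friedman_lemma_2-2} used a global maximum to bound $u$ on $C_2$, here only the interior-ball condition is available, so one must verify $C_2\subset\Phi(\bar B)$ to retain $w\leq 0$; and in case \eqref{item:Hopf_c_no_sign_parabolic} the $c^+$ reduction must be reconciled with the fact that $u\leq 0$ is guaranteed only on $\bar B$, which I would handle by shrinking $\rho$ and $\eps$ so that the sign bookkeeping on the slightly larger comparison region goes through.
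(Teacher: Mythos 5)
Your geometric setup (translation and rescaling, the pushing-down diffeomorphism $\Phi$, the barrier $h(t,x)=x_d$ with $Lh<0$ on $B_\rho(O)$) and your final limiting step agree with the paper's proof, as do your case reductions (subtracting $u(\bar P)$, and the replacement of $c$ by its positive part for Conclusion \eqref{item:Hopf_c_no_sign_parabolic}, which the paper phrases as replacing $L$ by $L+c^-$). The genuine gap is in the comparison step. You take the comparison region to be $B_\rho(O)\cap\sQ$ together with its flat bottom, treat the bottom as degenerate boundary, and invoke Lemma \ref{lem:Friedman_lemma_2-1} --- a statement about functions in $C^2_s(\underline\sQ)$ --- and you are then forced to conclude that the case $\bar P\in\mydirac_0\!\sQ$ ``must be read with $u\in C^2_s$ near $\bar P$.'' That conclusion is incorrect and would establish a strictly weaker lemma than the one stated: the hypotheses deliberately allow $u\in C^2(\sQ)$ or $W^{2,d+1}_{\loc}(\sQ)$ with only continuity at $\bar P$ and existence of $D_{\vec n}u(\bar P)$, and Remark \ref{rmk:Hypotheses_hopf_and_hopf_type_lemmas} emphasizes exactly this difference between Lemma \ref{lem:Degenerate_hopf_lemma_parabolic} and Proposition \ref{prop:Friedman_lemma_2-2}. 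Moreover, your route has no mechanism at all for the $W^{2,d+1}_{\loc}(\sQ)$ alternative, since Lemma \ref{lem:Friedman_lemma_2-1} rests on pointwise calculus at a maximum point of a $C^2_s$ function.

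The paper avoids any degenerate-boundary maximum principle by running the comparison on the half-ball $B^+_\rho(O)=\{x_d>0\}\cap B_\rho(O)$ and applying the \emph{classical} full-boundary weak maximum principle, Theorems \ref{thm:Classical_weak_maximum_principle_C2_parabolic_relaxed} and \ref{thm:Classical_weak_maximum_principle_W2d+1_parabolic_relaxed} (verifying their applicability is Step 1 of the paper's proof, and the two theorems cover precisely the $C^2$ and $W^{2,d+1}_{\loc}$ alternatives). The point you missed is that, after the diffeomorphism, the flat bottom $\{x_d=0\}\cap \bar B^+_\rho(O)$ does \emph{not} sit on the degenerate boundary, contrary to your assertion: near $O$ the hypersurface $\mydirac_0\!\sQ$ lies below the internally tangent sphere, so by \eqref{eq:Diffeomorphism_pushed_boundary_lower_halfspace} and the formula $y_d = x_d - G(t,x') - x_d^2$ of Claim \ref{claim:Diffeomorphism_derivative_properties}, $\Phi$ maps $\mydirac_0\!\sQ\less\{O\}$ into $\{x_d<0\}$; equivalently, the preimage of a flat-bottom point $(t,x',0)$ satisfies $x_d - x_d^2 = G(t,x')$, hence $x_d > G(t,x')$, a point strictly inside the original ball $B$. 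Thus $\bar B^+_\rho(O)\less\{O\}$ lies in the interior of the (relabeled) domain, $u\in C(\bar B^+_\rho(O))$ by interior regularity together with hypothesis (i), the Dirichlet data on the flat bottom is controlled by hypothesis \eqref{item:xzero_strict_local_max_parabolic} via \eqref{eq:u_strictly_less_M_punctured_quarter_disk_image} exactly as on $C_1$, and the classical maximum principle yields \eqref{eq:Hopf_lemma_key_half_ball_inequality}, after which your limiting step gives $u_{x_d}(O)\leq -\eps<0$. With this choice of region your remaining worries dissolve: $C_2$ is not part of $\partial B^+_\rho(O)$ at all, and $u\leq u(\bar P)$ holds on the entire comparison region, so the $c^+$ reduction needs no extra shrinking argument.
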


\begin{rmk}[On the hypothesis of strict interior local parabolicity]
\label{rmk:Hopf_lemma_strict_interior_local_parabolicity}
When $\bar P \in \mydirac_0\!\sQ$, the hypothesis that $a$ obeys \eqref{eq:a_locally_strictly_parabolic_interior_domain} can be omitted.
\end{rmk}

\begin{rmk}[Differences between the regularity hypotheses on $u$ in Proposition \ref{prop:Friedman_lemma_2-2} and Lemma \ref{lem:Degenerate_hopf_lemma_parabolic}]
\label{rmk:Hypotheses_hopf_and_hopf_type_lemmas}
While the hypotheses of the `Hopf-type lemma', Proposition \ref{prop:Friedman_lemma_2-2}, require that $u\in C^2_s(\underline\sQ)$, the hypotheses of the Hopf boundary point Lemma \ref{lem:Degenerate_hopf_lemma_parabolic} only require that $u\in C^2(\sQ)$ (respectively, $W^{2,d+1}_{\loc}(\sQ)$ when $L$ has measurable coefficients), $u$ is continuous at the boundary point, $\bar P$, and $D_{\vec n} u(\bar P)$ exists. Note also that while Lemma \ref{lem:Degenerate_hopf_lemma_parabolic} allows $u\in C^2(\sQ)$ or $W^{2,d+1}_{\loc}(\sQ)$, that is not true for Proposition \ref{prop:Friedman_lemma_2-2}, which requires that $u\in C^2_s(\underline\sQ)$.
\end{rmk}

\begin{proof}[Proof of Lemma \ref{lem:Degenerate_hopf_lemma_parabolic}]
When $\bar B\subset\sQ$, the conclusion \eqref{eq:Positive_inward_normal_derivative_parabolic} follows from the classical Hopf boundary point lemma for a parabolic linear second-order differential operator \cite[Theorem 2]{Friedman_1958}, \cite[Theorem 1]{Kusano_1963}, so it suffices to consider the case where $\bar B\cap\mydirac_0\!\sQ = \{\bar P\}$. For this purpose, we continue the notation and geometric setup employed in the proof of Proposition \ref{prop:Friedman_lemma_2-2}. We need only supplement the arguments in Step \ref{step:Barrier_contradiction} of the proof of Proposition \ref{prop:Friedman_lemma_2-2} to obtain the conclusion.

\setcounter{step}{0}
\begin{step}[Verification that the classical weak maximum principle holds for $L$ on $B^+_\rho(O)$]
\label{step:Degenerate_hopf_lemma_proof_weak_max_principle}
Theorem \ref{thm:Classical_weak_maximum_principle_C2_parabolic_relaxed} implies that the classical weak maximum principle (that is, with full boundary comparison) holds for the operator $L$ on $B^+_\rho(O) = \{x_d>0\}\cap B_\rho(O)$ and $L$-subharmonic functions
$$
w \in C^2(B^+_\rho(O))\cap C(\bar B^+_\rho(O)).
$$
Theorem \ref{thm:Classical_weak_maximum_principle_W2d+1_parabolic_relaxed} implies that the classical weak maximum principle holds for $L$ on $B^+_\rho(O)$ and $L$-subharmonic functions
$$
w \in W^{2,d+1}_{\loc}(B^+_\rho(O))\cap C(\bar B^+_\rho(O)),
$$
concluding this step.
\end{step}

\begin{step}[Application of the classical weak maximum principle]
\label{step:Degenerate_hopf_lemma_proof_application_weak_max_prin}
From \eqref{eq:Lv_negative_ball_intersect_Q}, we obtain $Lv<0$ (a.e.) on $B^+_\rho(O)$, where we recall that $v = u + \eps h$ from \eqref{eq:Defn_v}. Since $u-u(O)<0$ on $\widetilde D\less\{O\}$ by \eqref{eq:u_strictly_less_M_punctured_quarter_disk_image} (see Figure \ref{fig:parabolic_domain_interior_ball_and_deformation}) and $u\in C(\sQ) $ and
$$
\{x_d>0\}\cap\partial B_\rho^+(O) = C_1 \Subset \widetilde D\less\{O\},
$$
we obtain, writing $P=(t,x)=(t,x',x_d)\in\RR^{d+1}$,
$$
u(P) - u(O) \leq -m < 0, \quad\forall\, P \in \{x_d>0\}\cap\partial B_\rho^+(O),
$$
for some positive constant, $m$, depending on $\rho$ and $u$. But
$$
h(P) = x_d \leq \rho, \quad\forall\, P \in \{x_d>0\}\cap\partial B_\rho^+(O).
$$
Consequently,
$$
u(P) - u(O) + \eps h(P) \leq -m + \eps \rho \leq 0, \quad \forall\, P \in \{x_d>0\}\cap\partial B_\rho^+(O),
$$
provided we fix $\eps$ in the range $0<\eps\leq m/\rho$, while
$$
u(P) - u(O) + \eps h(P) = u(P) - u(O) \leq 0, \quad \forall\, P \in \{x_d=0\}\cap\partial B_\rho^+(O),
$$
since (trivially) $h(P)=0$ when $x_d=0$ and \eqref{eq:u_strictly_less_M_punctured_quarter_disk_image} implies that $u(P) \leq u(O)$ on $\partial B_\rho^+(O)\subset \widetilde D\cup\{O\}$. But
$$
L(u - u(O) + \eps h) = Lu - cu(O) +\eps Lh \leq -cu(O) \leq 0 \quad\hbox{on } B_\rho^+(O),
$$
where the last inequality holds if $c=0$ on $\sQ$ (as in Conclusion \eqref{item:Hopf_c_zero_parabolic}), or $c\geq 0$ on $\sQ$ and $u(O)\geq 0$  (as in Conclusion \eqref{item:Hopf_c_geq_zero_parabolic}), or $c$ has arbitrary sign on $\sQ$ and $u(O)=0$ (as in Conclusion \eqref{item:Hopf_c_no_sign_parabolic}). (For the case $u(O)=0$, we simply note as in the proof of \cite[Lemma 3.4]{GilbargTrudinger} that we can replace $L$ by $L+c^-$, where we write $c=c^+-c^-$.)

By Step \ref{step:Degenerate_hopf_lemma_proof_weak_max_principle}, for $u\in C^2(\sQ) $ or $W^{2,d+1}_{\loc}(\sQ) $, we can apply the classical weak maximum principle to $v = u + \eps h$ on $B^+_\rho(O)$, yielding
\begin{equation}
\label{eq:Hopf_lemma_key_half_ball_inequality}
u - u(O) + \eps h \leq 0 \quad\hbox{on } B^+_\rho(O),
\end{equation}
since $L(u - u(O) + \eps h) \leq 0$ on $B^+_\rho(O)$ and $u - u(O) + \eps h \leq 0$ on $\partial B^+_\rho(O)$.
\end{step}

\begin{step}[Sign of the directional derivative of the subsolution at the boundary]
\label{step:DegenerateHopfLemmaProof_SignDirectionalDeriv}
From \eqref{eq:Hopf_barrier_function}  and \eqref{eq:Hopf_lemma_key_half_ball_inequality}, we have
$$
\frac{1}{x_d}\left(u(0,0,x_d)-u(O)\right) \leq -\frac{\eps}{x_d}h(0,0,x_d) = -\eps, \quad\forall\,(0,0,x_d)\in B^+_\rho(O).
$$
Taking the limit in the preceding inequality as $x_d\downarrow 0$ yields
$$
u_{x_d}(O) \leq -\eps < 0,
$$
and thus \eqref{eq:Positive_inward_normal_derivative_parabolic} holds.
\end{step}
This completes the proof.
\end{proof}

We have an analogue of \cite[Lemma 2.3]{FriedmanPDE}.

\begin{lem}
\label{lem:Friedman_lemma_2-3}
Let $\sQ\subset\RR^{d+1}$ be an open subset and assume that the coefficients of $L$ obey \eqref{eq:a_locally_strictly_parabolic_interior_domain}, \eqref{eq:b_perp_positive_boundary_parabolic}, \eqref{eq:c_nonnegative_domain}, \eqref{eq:bc_locally_bounded}, \eqref{eq:a_continuous_degenerate_boundary}, and \eqref{eq:b_continuous_degenerate_boundary}. If $u \in C^2_s(\underline\sQ)$
obeys $Lu\leq 0$ on $\sQ$ and $u$ has a positive maximum in $\underline\sQ$ which is attained at a point $P^0$, then $u(P) = u(P^0)$ for all points $P \in C(P^0)$.
\end{lem}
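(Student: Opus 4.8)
The plan is to run the classical Nirenberg--Friedman spatial-propagation argument, now using Proposition \ref{prop:Friedman_lemma_2-2} (in its degenerate-boundary form, which we have just proved) as the key geometric tool, with points of $\mydirac_0\!\sQ$ treated on the same footing as interior points. Write $M := u(P^0) = \max_{\underline\sQ}u > 0$ and set $\mu := \{P\in C(P^0): u(P)=M\}$. Since $u\in C^2_s(\underline\sQ)\subset C(\underline\sQ)$, the set $\mu$ is relatively closed in $C(P^0)$, and it is non-empty as $P^0\in\mu$. Because $C(P^0)$ is connected (it is a connected component of $\underline\sQ\cap\{t=t^0\}$), it suffices to show that $\mu$ is relatively \emph{open} in $C(P^0)$; the desired conclusion $\mu=C(P^0)$ then follows immediately.

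To establish openness I would argue by contradiction, supposing some $\bar P=(t^0,\bar x)\in\mu$ fails to be interior to $\mu$, so that there are slice points arbitrarily close to $\bar x$ at which $u<M$. First I would run the standard growing-ball construction \emph{within the time slice} $\{t=t^0\}$: pick a slice point $P'=(t^0,x')$ with $u(P')<M$ so near $\bar P$ that $\rho:=\dist(x',\{x:u(t^0,x)=M\})$ is realized, whence $u(t^0,\cdot)<M$ on the open spatial ball $B_\rho(x')\subset\RR^d$ while $u(t^0,x_2)=M$ for some contact point $x_2\in\partial B_\rho(x')$ with $x_2\neq x'$. Passing to a smaller, internally tangent spatial ball centred at a point $x_c$ on the segment $[x',x_2]$, I may arrange that $x_2$ is the \emph{unique} point of the closed spatial ball at which $u(t^0,\cdot)=M$.

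Next I would promote this spatial ball to a space-time ellipsoid thin in the time direction, namely $E=\{(t,x): |x-x_c|^2/(\rho')^2 + (t-t^0)^2/\tau^2 < 1\}$ with $\bar E\subset\underline\sQ$ and $\tau$ small, so that its equatorial cross-section at $t=t^0$ is the spatial ball and $(t^0,x_2)\in\partial E$ is a \emph{spatial-side} contact point: the spatial centre $x_c\neq x_2$, so $(t^0,x_2)$ is not a temporal pole. I would then apply Proposition \ref{prop:Friedman_lemma_2-2} to $E$ — in its interior form when $\bar P\in\sQ$, and in the degenerate-boundary form when $\bar P\in\mydirac_0\!\sQ$, noting that the hypotheses \eqref{eq:a_locally_strictly_parabolic_interior_domain}, \eqref{eq:b_perp_positive_boundary_parabolic}, \eqref{eq:c_nonnegative_domain}, \eqref{eq:bc_locally_bounded}, \eqref{eq:a_continuous_degenerate_boundary}, \eqref{eq:b_continuous_degenerate_boundary} imposed here are exactly those of Proposition \ref{prop:Friedman_lemma_2-2}. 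The conclusion forces the contact point to satisfy $x_2=x_c$, contradicting $x_2\neq x_c$. This contradiction shows $\mu$ is open, finishing the proof. The genuinely new ingredient, beyond the classical argument, is precisely the treatment of the case $\bar P\in\mydirac_0\!\sQ$ through the boundary-degenerate Hopf-type lemma.

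The main obstacle is verifying the hypothesis of Proposition \ref{prop:Friedman_lemma_2-2} that $u<M$ on the \emph{open} ellipsoid $E$, and not merely on its slice cross-section. Off the slice only $u\leq M$ is known, so strictness must be argued. I would take $\tau$ small and combine the continuity of $u$ with the compactness of $\bar E$ and the uniqueness of the slice contact point $x_2$: any interior point of $E$ with $u=M$ would, as $\tau\downarrow 0$, have to accumulate at $(t^0,x_2)$, and one excludes this using the thin-ellipsoid/internal-tangency geometry exactly as in the classical references \cite[Lemma 2.3]{FriedmanPDE} and the elliptic analogue \cite[Theorem 5.1]{Feehan_maximumprinciple_v1}. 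This strict-inequality verification — rather than the invocation of Proposition \ref{prop:Friedman_lemma_2-2} itself — is where the real technical work lies, since growing an ellipsoid \emph{inside} $\{u<M\}$ would by Proposition \ref{prop:Friedman_lemma_2-2} only ever yield a temporal-pole contact and hence no contradiction; spatial propagation genuinely requires the equatorial, spatial-side contact configuration described above.
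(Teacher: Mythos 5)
Your propagation mechanism has a genuine gap at exactly the point you flag, and the fix you sketch does not work. Off the slice $\{t=t^0\}$ you know only $u\leq M$, so the closed set $\{u=M\}$ may intersect \emph{every} thin ellipsoid $E_\tau$ whose boundary passes through the equatorial contact point $(t^0,x_2)$: a sequence $(t_k,y_k)\in\{u=M\}$ with $y_k\to x_2$ from inside the spatial ball and $|t_k-t^0|$ small relative to $\rho'-|y_k-x_c|$ lies \emph{inside} $E_\tau$ for each fixed $\tau>0$, and shrinking $\tau$ only rescales the required rate rather than excluding such points. That these points accumulate at a boundary point of $E_\tau$ is perfectly compatible with infinitely many of them being interior to $E_\tau$, so the hypothesis $u<M$ on the open ellipsoid of Proposition \ref{prop:Friedman_lemma_2-2} cannot be verified. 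Your appeal to the classical references for this step is also misplaced: the proof of \cite[Lemma 2.3]{FriedmanPDE} (following Nirenberg) does not use equatorial-contact thin ellipsoids at all, precisely because this strictness cannot be arranged.

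The paper's proof is a one-line citation: it is Friedman's proof of \cite[Lemma 2.3]{FriedmanPDE} verbatim, with \cite[Lemma 2.2]{FriedmanPDE} replaced by Proposition \ref{prop:Friedman_lemma_2-2}, and that proof runs on the \emph{vertical}-contact conclusion you dismissed as useless. For a slice point $P=(t^0,x)$ with $u(P)<M$, let $d(P)$ denote the space-time distance from $P$ to the closed set $\{u=M\}$; the open ball of radius $d(P)$ about $P$ satisfies $u<M$ \emph{automatically}, by definition of $d(P)$, so Proposition \ref{prop:Friedman_lemma_2-2} (applied to this round ellipsoid, whose closure lies in $\underline\sQ$ for $d(P)$ small --- this is where points of $\mydirac_0\!\sQ$ being treated as interior, and hence the hypotheses \eqref{eq:a_locally_strictly_parabolic_interior_domain}, \eqref{eq:b_perp_positive_boundary_parabolic}, \eqref{eq:c_nonnegative_domain}, \eqref{eq:bc_locally_bounded}, \eqref{eq:a_continuous_degenerate_boundary}, \eqref{eq:b_continuous_degenerate_boundary}, enter) applies with no strictness verification needed, and its conclusion $\bar x=x^*$ says the nearest point of $\{u=M\}$ is $(t^0\pm d(P),x)$. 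Vertical contact then gives the Pythagorean estimate $d((t^0,x''))^2\leq d((t^0,x'))^2+|x''-x'|^2$, and subdividing a slice segment into $n$ equal pieces yields $d(y)\leq \bigl(d(x)^2+|y-x|^2/n\bigr)^{1/2}\to d(x)$, so $d$ is constant along slice segments in a suitable neighborhood; since $d=0$ exactly where $u(t^0,\cdot)=M$, the set $\{u(t^0,\cdot)=M\}$ is relatively open as well as closed in $C(P^0)$, which is the desired conclusion. Your open-and-closed framing and your use of Proposition \ref{prop:Friedman_lemma_2-2} at degenerate-boundary points are correct; the error is structural: the spatial-side contact configuration you try to force is neither attainable nor needed, because the distance-function iteration converts temporal-pole contact into spatial propagation.
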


\begin{proof}
The proof is identical to that of \cite[Lemma 2.3]{FriedmanPDE}, except that the role of \cite[Lemma 2.2]{FriedmanPDE} is replaced by that of Proposition \ref{prop:Friedman_lemma_2-2}.
\end{proof}

We have an analogue of \cite[Lemma 2.4]{FriedmanPDE}; note that the interval for $t$ is \emph{forward} in time here, consistent with the convention in this article and \cite{Bensoussan_Lions} of considering a terminal value problem, rather than \emph{backward} in time as in \cite[Lemma 2.4]{FriedmanPDE}, consistent with Friedman's convention of considering an initial value problem.

\begin{lem}
\label{lem:Friedman_lemma_2-4}
Let $\sQ\subset\RR^{d+1}$ be an open subset and assume that the coefficients of $L$ obey \eqref{eq:a_locally_strictly_parabolic_interior_domain}, \eqref{eq:b_perp_positive_boundary_parabolic}, \eqref{eq:c_nonnegative_domain}, \eqref{eq:c_nonnegative_boundary}, \eqref{eq:bc_locally_bounded}, \eqref{eq:a_continuous_degenerate_boundary}, and \eqref{eq:b_continuous_degenerate_boundary}. Assume that $\underline\sQ$ contains the closure $\bar R$ of an \emph{open} rectangle,
$$
R := \left\{(t,x) \in\sQ: t^0 < t < t^0 + a_0, \ |x_i-x_i^0| < a_i \hbox{ for } i=1,\ldots,d\right\},
$$
where $a_i>0$ for $i=0,1,\ldots,d$. If $u \in C^2_s(\underline\sQ)$ obeys $Lu\leq 0$ on $\sQ$ and $u$ has a positive maximum in $\bar R$ which is attained at the point $P^0=(t^0,x^0) \in \underline R$, then $u(P) = u(P^0)$ for all points $P \in \bar R$.
\end{lem}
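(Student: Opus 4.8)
The plan is to transcribe the proof of \cite[Lemma 2.4]{FriedmanPDE} into our degenerate setting, making the two substitutions already used for Lemma \ref{lem:Friedman_lemma_2-3}: each appeal to the horizontal (spatial) propagation lemma \cite[Lemma 2.3]{FriedmanPDE} is replaced by our Lemma \ref{lem:Friedman_lemma_2-3}, and each appeal to the ellipsoid lemma \cite[Lemma 2.2]{FriedmanPDE} is replaced by our Proposition \ref{prop:Friedman_lemma_2-2}; throughout, points of $\mydirac_0\!\sQ$ are to be treated exactly as interior points of $\sQ$, precisely as in Lemma \ref{lem:Friedman_lemma_2-1} and Proposition \ref{prop:Friedman_lemma_2-2}. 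Write $M := u(P^0) = \max_{\bar R}u$, which is positive by hypothesis, and recall $u \in C^2_s(\underline\sQ) \subset C^1(\underline\sQ)$.

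First I would reduce the assertion to a one-dimensional claim along the central time-axis: it suffices to show that $u(t,x^0) = M$ for every $t \in [t^0, t^0 + a_0]$. Indeed, granting this, fix such a $t$; the point $(t,x^0)$ then realizes the positive maximum $M$ of $u$ on $\bar R$, and since the slice $\{t\}\times\{|x_i-x_i^0|\le a_i\}$ is connected and contained in $\underline\sQ \cap\{t=\text{const}\}$, it lies in $C((t,x^0))$ (see Definition \ref{defn:Connected_subsets_parabolic} and Example \ref{exmp:Connected_subsets_parabolic_cylinder}). Lemma \ref{lem:Friedman_lemma_2-3} then gives $u\equiv M$ on that whole slice, and letting $t$ range over $[t^0,t^0+a_0]$ yields $u\equiv M$ on $\bar R$.

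To establish the axis claim I argue by contradiction, following Friedman. The set $A := \{t\in[t^0,t^0+a_0]: u(t,x^0)=M\}$ is closed (by continuity of $u$ on $\underline\sQ$) and contains $t^0$. If $A\ne[t^0,t^0+a_0]$, let $\alpha := \sup\{s\in[t^0,t^0+a_0] : [t^0,s]\subset A\}$, so that $u(\cdot,x^0)\equiv M$ on $[t^0,\alpha]$ while there exist $t_n\downarrow\alpha$ with $u(t_n,x^0)<M$; by Lemma \ref{lem:Friedman_lemma_2-3}, every slice over $[t^0,\alpha]$ carries the value $M$, whereas on the slices just above $\alpha$ where the axis value drops we have $u<M$ identically. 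At the frontier point $Q:=(\alpha,x^0)$ I would now run Friedman's ellipsoid construction: inscribe in $\underline\sQ$ an ellipsoid lying in $\{t>\alpha\}$, on whose interior $u<M$ and which has $Q$ on its boundary, and feed it to Proposition \ref{prop:Friedman_lemma_2-2}; the delicate part of \cite[Lemma 2.4]{FriedmanPDE} is the placement of this ellipsoid so that the contact it produces forces the contradiction, and I would reproduce that placement unchanged.

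The main obstacle is this time-direction step, and in particular carrying it out when $Q=(\alpha,x^0)\in\mydirac_0\!\sQ$, where $a$ degenerates and a naive space-time ball cannot be inscribed in $\underline\sQ$. For this case I would reuse, essentially verbatim, the apparatus developed in the proof of Proposition \ref{prop:Friedman_lemma_2-2}: the flattening diffeomorphism that straightens $\mydirac_0\!\sQ$ near $Q$ while preserving \eqref{eq:a_locally_strictly_parabolic_interior_domain}, the strict positivity of the transformed normal drift coming from \eqref{eq:b_perp_positive_boundary_parabolic} together with the continuity hypotheses \eqref{eq:a_continuous_degenerate_boundary} and \eqref{eq:b_continuous_degenerate_boundary}, and the linear barrier \eqref{eq:Hopf_barrier_function}. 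With $\mydirac_0\!\sQ$ treated as interior, the final contradiction is then extracted from the special case Lemma \ref{lem:Friedman_lemma_2-1} exactly as in Proposition \ref{prop:Friedman_lemma_2-2}, using \eqref{eq:c_nonnegative_domain} and \eqref{eq:c_nonnegative_boundary}. The complementary case $Q\in\sQ$ is the classical one and requires no change beyond the substitution of Lemma \ref{lem:Friedman_lemma_2-3} and Proposition \ref{prop:Friedman_lemma_2-2} for their counterparts in \cite{FriedmanPDE}.
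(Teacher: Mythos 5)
There is a genuine gap, and it lies precisely at the step you flag as ``the delicate part'': you propose to derive the contradiction at the frontier point $Q=(\alpha,x^0)$ by inscribing an ellipsoid in $\{t>\alpha\}$ and invoking Proposition \ref{prop:Friedman_lemma_2-2}, but no placement of an ellipsoid can produce a contradiction there, and Friedman's proof of \cite[Lemma 2.4]{FriedmanPDE} does not in fact use his ellipsoid lemma \cite[Lemma 2.2]{FriedmanPDE} at all. The obstruction is geometric: since $u\equiv M$ on every slice over $[t^0,\alpha]$ (by Lemma \ref{lem:Friedman_lemma_2-3}), any admissible ellipsoid $E$ with $u<M$ on $E$ and $Q\in\partial E$ must lie entirely in $\{t>\alpha\}$; but an ellipsoid $\gamma_0(t-t^*)^2+\sum_i\gamma_i(x_i-x_i^*)^2<R^2$ contained in $\{t>\alpha\}$ whose closure meets the hyperplane $\{t=\alpha\}$ does so only at its time-minimizing point, which lies directly below the center, forcing $x^0=x^*$. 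Thus the conclusion $\bar x=x^*$ of Proposition \ref{prop:Friedman_lemma_2-2} is automatically satisfied and yields nothing: the ellipsoid lemma rules out contact away from the spatial center and is silent exactly in the ``polar'' contact configuration that the time-propagation step presents. This is why Friedman's Lemma 2.4 instead runs a direct barrier argument --- an auxiliary function $h$ with $Lh<0$ (using local boundedness of $b,c$, here \eqref{eq:bc_locally_bounded}) on a paraboloid-type region with vertex at the frontier point, a comparison on that region via the special case \cite[Lemma 2.1]{FriedmanPDE}, and a sharpened sign of $u_t$ at the vertex contradicting $Lu\leq 0$ together with the slice-maximum conditions --- and why the paper's proof states that the roles of \cite[Lemmas 2.1 \emph{and} 2.3]{FriedmanPDE} are replaced by Lemmas \ref{lem:Friedman_lemma_2-1} and \ref{lem:Friedman_lemma_2-3}; Proposition \ref{prop:Friedman_lemma_2-2} enters only through the proof of Lemma \ref{lem:Friedman_lemma_2-3}, not here. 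In the degenerate case $Q\in\mydirac_0\!\sQ$ the vertex contradiction goes through the first-order structure exactly as in Lemma \ref{lem:Friedman_lemma_2-1} (using $C^2_s$, \eqref{eq:b_perp_positive_boundary_parabolic}, \eqref{eq:c_nonnegative_boundary}), so most of the diffeomorphism apparatus of Proposition \ref{prop:Friedman_lemma_2-2} that you propose to redo is unnecessary once the correct barrier is used.

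A secondary, fixable flaw is your frontier extraction: with $\alpha:=\sup\{s:[t^0,s]\subset A\}$ you only obtain a \emph{sequence} $t_n\downarrow\alpha$ of slices on which $u<M$, possibly interleaved with slices where $u\equiv M$, so there is no solid slab above $\alpha$ in which to place any barrier region (or ellipsoid). One should instead take a connected component $(\alpha,\beta)$ of the relatively open set $[t^0,t^0+a_0]\setminus A$ --- equivalently, Friedman's first-return-time construction --- so that $u(t,x^0)<M$ for \emph{all} $t\in(\alpha,\beta)$, whence Lemma \ref{lem:Friedman_lemma_2-3} gives $u<M$ on the entire open slab over $(\alpha,\beta)$ inside $R$, adjacent to the vertex $(\alpha,x^0)$ where $u=M$. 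Your overall reduction to the axis claim and the use of Lemma \ref{lem:Friedman_lemma_2-3} to sweep out the slices are correct and agree with the paper; it is the mechanism at the frontier point that must be replaced.
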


\begin{proof}
The proof is identical to that of \cite[Lemma 2.4]{FriedmanPDE}, except that the roles of \cite[Lemmas 2.1 and 2.3]{FriedmanPDE} are replaced by those of Lemmas \ref{lem:Friedman_lemma_2-1} and \ref{lem:Friedman_lemma_2-3}.
\end{proof}

Finally, we have the following analogue of \cite[Theorem 2.1]{FriedmanPDE}.

\begin{thm}[Strong maximum principle when $c\geq 0$]
\label{thm:Friedman_theorem_2-1}
Let $\sQ\subset\RR^{d+1}$ be an open subset and assume that the coefficients of $L$ obey \eqref{eq:a_locally_strictly_parabolic_interior_domain}, \eqref{eq:b_perp_positive_boundary_parabolic}, \eqref{eq:c_nonnegative_domain}, \eqref{eq:c_nonnegative_boundary}, \eqref{eq:bc_locally_bounded}, \eqref{eq:a_continuous_degenerate_boundary}, and \eqref{eq:b_continuous_degenerate_boundary}. If $u \in C^2_s(\underline\sQ)$ obeys $Lu\leq 0$ on $\sQ$ and $u$ has a global positive maximum which is attained at a point $P^0 \in \underline\sQ$, then $u = u(P^0)$ on $S(P^0)$.
\end{thm}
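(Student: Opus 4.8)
The plan is to follow the structure of the proof of \cite[Theorem 2.1]{FriedmanPDE}, but with the boundary-degenerate analogues proved above playing the roles of their classical counterparts and with points of $\mydirac_0\!\sQ$ treated throughout as interior points of $\underline\sQ$; concretely, the horizontal (time-slice) propagation Lemma \ref{lem:Friedman_lemma_2-3} and the forward-in-time rectangle propagation Lemma \ref{lem:Friedman_lemma_2-4} do the work, with Lemma \ref{lem:Friedman_lemma_2-1} underlying both. Write $M := u(P^0) = \max_{\underline\sQ} u > 0$. Since $S(P^0)$ consists of the points reachable from $P^0$ by a single connecting curve, it suffices to fix an arbitrary $P_1 \in S(P^0)$ and prove $u(P_1) = M$.

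By Definition \ref{defn:Connected_subsets_parabolic}, I would choose a simple continuous curve $\gamma:[0,1]\to\underline\sQ$ with $\gamma(0)=P^0$ and $\gamma(1)=P_1$ along which $t(s):=t\circ\gamma(s)$ is non-\emph{decreasing} (the reverse parametrization of the time-non-increasing curve from $P_1$ to $P^0$ furnished by the definition of $S(P^0)$). Because $u\in C^2_s(\underline\sQ)\subset C(\underline\sQ)$, the function $u\circ\gamma$ is continuous, so
$$
J := \{s\in[0,1]: u\equiv M \hbox{ on } \gamma([0,s])\}
$$
is a nonempty closed initial segment, say $J=[0,\sigma]$ with $u(\gamma(\sigma))=M$. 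It then remains to rule out $\sigma<1$.

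Assume $\sigma<1$ and set $P_\sigma:=\gamma(\sigma)$ and $t_\sigma:=t(\sigma)$. Since $u(P_\sigma)=M$, the horizontal propagation Lemma \ref{lem:Friedman_lemma_2-3} gives $u\equiv M$ on the whole time-slice component $C(P_\sigma)$. I would then choose a small closed rectangle $\bar R\subset\underline\sQ$ whose bottom face lies in the slice $\{t=t_\sigma\}$ inside $C(P_\sigma)$ and which extends forward in time to $t_\sigma+a_0$; when $P_\sigma\in\mydirac_0\!\sQ$ this rectangle is taken in the degenerate-adapted sense of Lemma \ref{lem:Friedman_lemma_2-4}. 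Because $u$ attains its positive maximum $M$ at a bottom-face point of $\underline R$, that lemma yields $u\equiv M$ on all of $\bar R$. As $t(s)$ is non-decreasing, $\gamma([\sigma,\sigma+\eta])$ is contained in $C(P_\sigma)\cup\bar R$ for some $\eta>0$ (its portion at time $t_\sigma$ staying in the slice component, its portion at later times staying in $\bar R$), so $u\equiv M$ on $\gamma([0,\sigma+\eta])$, contradicting the maximality of $\sigma$. Hence $\sigma=1$, $u(P_1)=M$, and since $P_1\in S(P^0)$ was arbitrary, $u\equiv M$ on $S(P^0)$.

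I expect the main obstacle to be the geometric bookkeeping of the stepping argument near the degenerate boundary: showing that at each step, and in particular when $P_\sigma\in\mydirac_0\!\sQ$, one can fit a closed rectangle $\bar R\subset\underline\sQ$ whose forward-time part covers the next arc of $\gamma$ while $\gamma$ may repeatedly enter and leave $\mydirac_0\!\sQ$. This is precisely where the boundary-degenerate hypotheses \eqref{eq:b_perp_positive_boundary_parabolic}, \eqref{eq:a_continuous_degenerate_boundary}, \eqref{eq:b_continuous_degenerate_boundary} and the $C^2_s(\underline\sQ)$ regularity of $u$ are used, through Lemmas \ref{lem:Friedman_lemma_2-3} and \ref{lem:Friedman_lemma_2-4}, to make $\mydirac_0\!\sQ$ behave like the interior; off $\mydirac_0\!\sQ$ the construction is the classical one of \cite[Theorem 2.1]{FriedmanPDE}. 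A secondary care-point is that a time-slice component $C(P_\sigma)$ is only guaranteed to be closed (and open as well when there are finitely many components), so the horizontal step must be applied componentwise via Lemma \ref{lem:Friedman_lemma_2-3} rather than by assuming an open neighborhood within the slice.
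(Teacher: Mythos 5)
Your proposal is correct and takes essentially the same approach as the paper: the paper's own proof of Theorem \ref{thm:Friedman_theorem_2-1} is simply a citation of the curve-chaining argument of \cite[Theorem 2.1]{FriedmanPDE} with \cite[Lemma 2.4]{FriedmanPDE} replaced by Lemma \ref{lem:Friedman_lemma_2-4}, which is exactly what you reconstruct using Lemmas \ref{lem:Friedman_lemma_2-3} and \ref{lem:Friedman_lemma_2-4} along a time-non-decreasing curve from $P^0$. Your maximal-segment bookkeeping ($J=[0,\sigma]$ extended past $\sigma$ by a slice step and a forward-in-time rectangle step) is an equivalent reorganization of Friedman's first-failure-point contradiction, and the rectangle-fitting caveat near a curved $\mydirac_0\!\sQ$ that you flag is left at the same level of detail by the paper itself.
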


\begin{proof}
The proof is identical to that of \cite[Theorem 2.1]{FriedmanPDE}, except that the role of \cite[Lemma 2.4]{FriedmanPDE} is replaced by that of Lemma \ref{lem:Friedman_lemma_2-4}.
\end{proof}

\begin{rmk}[Strong maximum principle for parabolic operators with multiple time coordinates]
When $m=1$, and $b_{ij}=0$ on $\sQ$, and $b_1=1$ on $\sQ$ in the notation of \cite[Equation (2.1)]{FriedmanPDE}, then \cite[Theorem 2.2]{FriedmanPDE} reduces to \cite[Theorem 2.1]{FriedmanPDE}, albeit with the weaker conclusion, namely that $u = u(P^0)$ on $C(P^0)$ rather than $u = u(P^0)$ on $S(P^0)$. (Recall that $C(P^0) \subset S(P^0)$.)
\end{rmk}

We now relax the requirement that $c\geq 0$ on $\underline\sQ$ and give the following analogue of \cite[Theorem 2.3]{FriedmanPDE}.

\begin{thm}[Strong maximum principle when $c$ has arbitrary sign]
\label{thm:Friedman_theorem_2-3}
Let $\sQ\subset\RR^{d+1}$ be an open subset and assume that the coefficients of $L$ obey \eqref{eq:a_locally_strictly_parabolic_interior_domain}, \eqref{eq:b_perp_positive_boundary_parabolic}, \eqref{eq:c_nonnegative_domain}, \eqref{eq:c_nonnegative_boundary}, \eqref{eq:bc_locally_bounded}, and \eqref{eq:abc_continuous_degenerate_boundary}. Suppose $u \in C^2_s(\underline\sQ)$ obeys $Lu\leq 0$ on $\sQ$. If $u\leq 0$ on $\underline\sQ$ and $u(P^0)=0$ for some $P^0 \in \underline\sQ$, then $u = 0$ on $C(P^0)$.
\end{thm}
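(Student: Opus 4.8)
The plan is to follow the proof of \cite[Theorem 2.3]{FriedmanPDE}, first reducing to an operator with nonnegative zeroth-order coefficient and then running the ball-touching/Hopf argument of Lemma \ref{lem:Friedman_lemma_2-3} in the borderline situation where the maximum value is exactly zero. Since $c$ is continuous on $\mydirac_0\!\sQ$ by \eqref{eq:c_continuous_degenerate_boundary}, I would write $c = c^+ - c^-$ with $c^\pm := \max\{\pm c,0\}$ continuous on $\mydirac_0\!\sQ$ and introduce the auxiliary operator $\tilde L u := Lu + c^- u = -u_t - \tr(aD^2u) - \langle b,Du\rangle + c^+ u$, whose zeroth-order coefficient $c^+$ is nonnegative and continuous on $\mydirac_0\!\sQ$. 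Thus $\tilde L$ inherits the hypotheses \eqref{eq:a_locally_strictly_parabolic_interior_domain}, \eqref{eq:b_perp_positive_boundary_parabolic}, \eqref{eq:bc_locally_bounded}, \eqref{eq:a_continuous_degenerate_boundary}, \eqref{eq:b_continuous_degenerate_boundary} together with $c^+ \geq 0$ on $\sQ$ and on $\mydirac_0\!\sQ$. Because $u \leq 0$ on $\underline\sQ$ and $c^- \geq 0$, we have $c^- u \leq 0$, hence $\tilde L u = Lu + c^- u \leq 0$ on $\sQ$; so $u$ is $\tilde L$-subharmonic, and by Definition \ref{defn:Second-order_boundary_regularity} this extends to $\tilde L u \leq 0$ on $\underline\sQ$. (When $c\geq 0$ this step is vacuous, as $c^- \equiv 0$.)

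Next, writing $M := 0 = \sup_{\underline\sQ} u$, attained at $P^0$, I would set $E := \{P \in C(P^0) : u(P) = 0\}$. This set is nonempty and relatively closed in $C(P^0)$ by continuity of $u \in C^2_s(\underline\sQ) \subset C(\underline\sQ)$, so it suffices to show that $E$ is relatively open, whence $E = C(P^0)$ by connectedness of $C(P^0)$. For openness I would argue by contradiction exactly as in Lemma \ref{lem:Friedman_lemma_2-3}: if $u \not\equiv 0$ near a point of $E$ within the slice $\{t = t^0\}$, a standard ball-construction produces a space-time ball $B \subset \sQ$ with $u < 0$ on $B$, centered at a point $(t^0,x^*)$ of the slice, whose closure meets $\{u = 0\}$ at a single equatorial point $\bar P = (t^0,\bar x) \in \partial B \cap C(P^0)$ with $\bar x \neq x^*$.

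The crucial difference from Lemma \ref{lem:Friedman_lemma_2-3} is that the maximum value is now $0$ rather than positive, so Proposition \ref{prop:Friedman_lemma_2-2} is unavailable; instead I would invoke the Hopf boundary point Lemma \ref{lem:Degenerate_hopf_lemma_parabolic}, whose conclusion \eqref{item:Hopf_c_no_sign_parabolic} holds, since $u(\bar P) = 0$, irrespective of the sign of $c$, and yields $D_{\vec n} u(\bar P) < 0$ for the inward normal $\vec n(\bar P)$ to $\partial B$. To contradict this I would show $D_{\vec n} u(\bar P) = 0$. If $\bar P \in \sQ$, this is immediate: $\bar P$ is an interior maximum, so $Du(\bar P) = 0$. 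If $\bar P \in \mydirac_0\!\sQ$, the contact is necessarily tangential to $\mydirac_0\!\sQ$, so $\vec n(\bar P)$ is the spatial inward normal to $\mydirac_0\!\sQ$; evaluating $\tilde L u$ at $\bar P$ and using $u_t(\bar P) = 0$, the vanishing of the tangential spatial derivatives, the second-order boundary condition $\tr(aD^2u)(\bar P) = 0$, and $c^+(\bar P)u(\bar P) = 0$ gives $\tilde L u(\bar P) = -b^\perp(\bar P)\, D_{\vec n} u(\bar P)$. Since $b^\perp(\bar P) > 0$ by \eqref{eq:b_perp_positive_boundary_parabolic}, $\tilde L u(\bar P) \leq 0$, and $D_{\vec n} u(\bar P) \leq 0$ from the maximum, this forces $D_{\vec n} u(\bar P) = 0$. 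Either way we contradict the Hopf inequality, so $E$ is open and $u \equiv 0$ on $C(P^0)$.

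I expect the main obstacle to be the degenerate-boundary contact case $\bar P \in \mydirac_0\!\sQ$: one must check that the ball-construction places the contact point at a non-polar, equatorial location tangent to $\mydirac_0\!\sQ$ (so that Lemma \ref{lem:Degenerate_hopf_lemma_parabolic} applies with $\bar x \neq x^*$), and that the strict positivity $b^\perp > 0$ of \eqref{eq:b_perp_positive_boundary_parabolic} is used twice, once to run the Hopf lemma and once, through the second-order boundary condition built into $C^2_s(\underline\sQ)$, to deduce $D_{\vec n} u(\bar P) = 0$ at the degenerate-boundary maximum. The remaining ingredients (connectedness of $C(P^0)$, the ball construction, and the relatively-open/closed dichotomy) are routine and identical to \cite[Theorem 2.3]{FriedmanPDE}; note also that, as there, the conclusion is confined to the time slice $C(P^0)$ rather than $S(P^0)$, since the backward-in-time propagation of Lemma \ref{lem:Friedman_lemma_2-4} and Theorem \ref{thm:Friedman_theorem_2-1} relies on the positive-maximum machinery, which is unavailable here.
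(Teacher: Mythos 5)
Your proposal takes a genuinely different route from the paper. The paper's proof follows Friedman's own proof of \cite[Theorem 2.3]{FriedmanPDE}: for balls $B_\rho(P^1)\Subset\sQ$ it cites Friedman directly, and for $P^1\in\mydirac_0\!\sQ$ it makes the \emph{multiplicative} substitution $v := e^{-\sigma x_d}u$, which converts $L$ into $L_0 + \left(c-\sigma b^d - \sigma^2 a^{dd}\right)$ with $L_0$ having zero zeroth-order coefficient. Since $a(P^1)=0$ on the degenerate boundary, the $\sigma^2 a^{dd}$ term is useless there, and it is precisely $\tilde b^\perp(P^1)=b^d(P^1)>0$ from \eqref{eq:b_perp_positive_boundary_parabolic}, together with the continuity \eqref{eq:abc_continuous_degenerate_boundary}, that makes $c-\sigma b^d-\sigma^2 a^{dd}\leq 0$ on $B_\rho(P^1)\cap\sQ$ for $\sigma$ large. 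Because $L_0$ annihilates constants, Theorem \ref{thm:Friedman_theorem_2-1} applies to $v+1$, which has \emph{positive} maximum $1$, and the open/closed/connected argument in $C(P^0)$ finishes. Your \emph{additive} absorption $\tilde L := L + c^-$ (valid since $u\leq 0$) combined with a Hopf argument run directly at maximum level zero is a legitimately different mechanism, and your degenerate-boundary contradiction is correct and nicely parallel to the paper's Lemma \ref{lem:Friedman_lemma_2-1} computation: at a global maximum $\bar P\in\mydirac_0\!\sQ$ with $u(\bar P)=0$ one has $u_t(\bar P)=0$, vanishing tangential derivatives, $\tr(aD^2u)(\bar P)=0$ from $C^2_s(\underline\sQ)$, and $c(\bar P)u(\bar P)=0$ irrespective of the sign of $c$, so $Lu(\bar P)=-b^\perp(\bar P)D_{\vec n}u(\bar P)\leq 0$ and \eqref{eq:b_perp_positive_boundary_parabolic} force $D_{\vec n}u(\bar P)=0$, contradicting conclusion \eqref{item:Hopf_c_no_sign_parabolic} of Lemma \ref{lem:Degenerate_hopf_lemma_parabolic} (whose own proof uses the same $L+c^-$ replacement, so the sign of $c$ is no obstacle there).

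There is, however, a genuine gap at the step you call a ``standard ball-construction.'' In the parabolic setting, an expanding space-time ball around a slice point where $u<0$ has no reason to make first contact with $\{u=0\}$ at an equatorial point of the same slice: first contact can occur at a spatially polar point, $\bar x = x^*$, where the hypothesis $\bar x\neq x^*$ of Lemma \ref{lem:Degenerate_hopf_lemma_parabolic} fails and no Hopf inequality is available --- indeed, since your Hopf argument \emph{excludes} equatorial contact, any first-contact point is forced to be polar, and the real work is to extract a contradiction (equivalently, openness of $\{u=0\}\cap C(P^0)$) from that. This is exactly the nontrivial geometric content of Friedman's Lemma 2.3, i.e.\ the paper's Lemma \ref{lem:Friedman_lemma_2-3}, which you cannot cite because it assumes a \emph{positive} maximum; note also that your closing claim that this step is ``identical to \cite[Theorem 2.3]{FriedmanPDE}'' is inaccurate, since Friedman's Theorem 2.3 does not re-run any ball construction at level zero but performs the exponential reduction to his Theorem 2.1, just as the paper does. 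To complete your route you would need to prove zero-maximum, arbitrary-sign-$c$ analogues of Proposition \ref{prop:Friedman_lemma_2-2} and Lemma \ref{lem:Friedman_lemma_2-3}. These do go through --- at a maximum of value zero the $cu$ term vanishes regardless of the sign of $c$, and your Hopf computation is in effect the proof of the zero-maximum Proposition \ref{prop:Friedman_lemma_2-2} --- but carrying out the slice-propagation machinery at level zero is a substantial duplication of \S\ref{sec:Strong_maximum_principle_C2s_subharmonic_functions}, which is precisely what the paper's shorter exponential-substitution proof is designed to avoid.
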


\begin{proof}
Let $B_\rho(P^1)\subset\RR^{d+1}$ be an open ball of radius $\rho>0$ centered at a point $P^1\in C(P^0)$. If $B_\rho(P^1)\Subset\sQ$, then the proof of \cite[Theorem 2.3]{FriedmanPDE} yields $u=0$ on $B_\rho(P^1)\cap C(P^0)$, so it suffices to consider the case where $B_\rho(P^1)$ is centered at a point $P^1\in\mydirac_0\!\sQ$. We may assume without loss of generality (by a translation of the spatial coordinates, $x_1,\ldots,x_d$) that $P^1=(t^1,0)$ and (by a rotation of the spatial coordinates, $x_1,\ldots,x_d$) that $\vec n(P^1) = e_d$.

Define $v := e^{-\sigma x_d}u$ on $\sQ$, for a positive constant $\sigma$ to be chosen later, and observe that our hypothesis on $u$ yields
$$
v \leq 0 \quad\hbox{on }\sQ.
$$
A calculation yields
\begin{align*}
e^{\sigma x_d}Lu &= -v_t - a^{ij}v_{x_ix_j} -\left(b^i+2\sigma a^{id}\right) v_{x_i} + \left(c - \sigma b^d - \sigma^2 a^{dd}\right)v
\\
&= -v_t - a^{ij}v_{x_ix_j} - \tilde b^iv_{x_i} + \left(c - \sigma b^d - \sigma^2 a^{dd}\right)v
\\
&=: L_0v + \left(c - \sigma b^d - \sigma^2 a^{dd}\right)v,
\end{align*}
where the coefficient of $v$ in $L_0v$ is zero. We again have $v \in C^2(\sQ) \cap C^1(\underline\sQ)$ with $\tr(aD^2v) \in C(\underline\sQ)$ and $\tr(aD^2v)=0$ on $\mydirac_0\!\sQ$, so $v\in C^2_s(\underline\sQ)$. Moreover, $Lu \leq 0$ on $\sQ$ implies that
$$
L_0v \leq -\left(c - \sigma b^d - \sigma^2 a^{dd}\right)v \quad\hbox{on }\sQ.
$$
The coefficients $\tilde b^i := b^i-2\sigma a^{id}$ are continuous along $\mydirac_0\!\sQ$ by \eqref{eq:abc_continuous_degenerate_boundary}. Since $a=0$ on $\mydirac_0\!\sQ$, we have $a(P^1)=0$ and so
$$
\tilde b^\perp(P^1) = \tilde b^d(P^1) = b^d(P^1) > 0,
$$
and so \eqref{eq:b_perp_positive_boundary_parabolic} holds for $\tilde b^\perp$ on $B_\rho(P^1)\cap\mydirac_0\!\sQ$ for small enough $\rho$. Moreover,
$$
\left(c - \sigma b^d - \sigma^2 a^{dd}\right)(P^1) = c(P^1) - \sigma b^d(P^1),
$$
and so, for a large enough constant $\sigma=\sigma(c(P^1),b^d(P^1))$, we obtain
$$
\left(c - \sigma b^d - \sigma^2 a^{dd}\right)(P^1) < 0.
$$
Because the coefficients $a^{dd}, b^d, c$ are continuous at $P^1 \in \mydirac_0\!\sQ$ by \eqref{eq:abc_continuous_degenerate_boundary}, for a small enough radius $\rho$, we have
$$
c - \sigma b^d - \sigma^2a^{dd}  \leq 0 \quad\hbox{on } B_\rho(P^1)\cap\sQ.
$$
Therefore, $L_0v \leq 0$ on $B_\rho(P^1)\cap\sQ$. Since the coefficient of $v$ in $L_0v$ is zero (in particular, nonnegative) on $B_\rho(P^1)\cap\sQ$, while $L_0(v+1)=L_0v\leq 0$ on $B_\rho(P^1)\cap\sQ$ and $v+1\leq (v+1)(P^0)=1$ on $B_\rho(P^1)\cap\sQ$, then Theorem \ref{thm:Friedman_theorem_2-1} applies to give $v+1=v(P^0)+1$ on $B_\rho(P^1)\cap C(P^0)$, and thus $u=u(P^0)=0$ on $B_\rho(P^1)\cap C(P^0)$. Therefore, the subset of points $P\in C(P^0)$ where $u(P)=0$ is open and, because this subset is necessarily closed (since $u$ is continuous on $\underline\sQ$) and $C(P^0)$ is connected, we must have $u=0$ on $C(P^0)$.
\end{proof}

The following refinement of Theorem \ref{thm:Friedman_theorem_2-1}, analogous to \cite[Theorem 2.4]{FriedmanPDE}, makes a stronger assertion since the hypotheses only assume that $u(P^0)$ is the maximum of $u$ on $S(P^0)\subset \underline\sQ$ rather than $\underline\sQ$.

\begin{thm}[Refined strong maximum principle when $c\geq 0$]
\label{thm:Friedman_theorem_2-4}
Let $\sQ\subset\RR^{d+1}$ be an open subset and assume that the coefficients of $L$ obey \eqref{eq:a_locally_strictly_parabolic_interior_domain}, \eqref{eq:b_perp_positive_boundary_parabolic}, \eqref{eq:c_nonnegative_domain}, \eqref{eq:c_nonnegative_boundary}, \eqref{eq:b_locally_bounded},\eqref{eq:c_locally_bounded}, \eqref{eq:a_continuous_degenerate_boundary}, and \eqref{eq:b_continuous_degenerate_boundary}. If $u \in C^2_s(\underline\sQ)$ obeys $Lu\leq 0$ on $S(P^0)$, and $c\geq 0$ on $S(P^0)$, and $u$ has a global positive maximum which is attained at the point $P^0$, then $u = u(P^0)$ on $S(P^0)$.
\end{thm}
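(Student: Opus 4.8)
The plan is to imitate the proof of Theorem~\ref{thm:Friedman_theorem_2-1}, carrying out every step inside $S(P^0)$, where the hypotheses $Lu\leq 0$ and $c\geq 0$ hold and where $u\leq M:=u(P^0)$; here I read the hypothesis, following the preceding Remark, as $M=\sup_{S(P^0)}u>0$. The enabling observation is a transitivity property of the sets of Definition~\ref{defn:Connected_subsets_parabolic}: if $P\in S(P^0)$ and $Q\in S(P)$, then concatenating a time-nonincreasing curve from $Q$ to $P$ with one from $P$ to $P^0$ shows $Q\in S(P^0)$, so $S(P)\subseteq S(P^0)$; taking the first curve inside a single time slice gives $C(P)\subseteq S(P^0)$ as well. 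Moreover, if $Q\in S(P^0)$ and $\bar R\subset\underline\sQ$ is a closed rectangle as in Lemma~\ref{lem:Friedman_lemma_2-4} whose bottom face sits at the time of $Q$ and which extends only to \emph{later} times, then convexity of $\bar R$ joins any of its points to $Q$ by a segment along which time is nonincreasing, so $\bar R\subseteq S(P^0)$. Hence on each such $C(Q)$ and $\bar R$ we have $u\leq M$, with equality at $Q$ whenever $u(Q)=M$, and the coefficient inequalities are available there.

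First I would record that Lemma~\ref{lem:Friedman_lemma_2-4} is already \emph{local}: its hypothesis is that $u$ attain its maximum over $\bar R$ at the bottom corner, and its proof uses $Lu\leq 0$ and $c\geq 0$ only on $R$. Thus, for any $Q\in S(P^0)$ with $u(Q)=M$ and any admissible forward rectangle $\bar R\subseteq\underline\sQ$ based at $Q$, the first paragraph gives $\bar R\subseteq S(P^0)$ and $u\leq M$ on $\bar R$ with $u(Q)=M$, so Lemma~\ref{lem:Friedman_lemma_2-4} yields $u\equiv M$ on $\bar R$. This single local propagation statement already disseminates the value $M$ both forward in time and horizontally across the spatial extent of the bottom face of $\bar R$.

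With this in hand I would run the last-point argument. Given an arbitrary $P_1\in S(P^0)$, choose $\gamma\colon[0,1]\to\underline\sQ$ with $\gamma(0)=P^0$, $\gamma(1)=P_1$, and $t\circ\gamma$ nondecreasing; every $\gamma(s)$ then lies in $S(P^0)$. Put $s^*:=\sup\{s\in[0,1]:u(\gamma(s))=M\}$; continuity gives $u(Q)=M$ for $Q:=\gamma(s^*)$, and $s^*<1$ if $u(P_1)<M$. Choosing a forward rectangle $\bar R\subseteq\underline\sQ$ based at $Q$ and applying the local propagation statement, we get $u\equiv M$ on $\bar R$. Since $t\circ\gamma$ is nondecreasing and $\gamma$ is continuous, $\gamma(s)\in\bar R$ for all $s$ slightly larger than $s^*$, whence $u(\gamma(s))=M$ there, contradicting the definition of $s^*$. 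Therefore $u(P_1)=M$, and as $P_1\in S(P^0)$ was arbitrary, $u\equiv u(P^0)$ on $S(P^0)$.

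The main obstacle is the geometric bookkeeping ensuring that every region invoked stays inside $S(P^0)$. The forward rectangles are controlled by convexity together with the time-nonincreasing structure of $S(P^0)$, and the Hopf-type ellipsoids hidden inside the proof of Lemma~\ref{lem:Friedman_lemma_2-4} (via Lemma~\ref{lem:Friedman_lemma_2-3} and Proposition~\ref{prop:Friedman_lemma_2-2}) extend to later times only, consistent with the terminal-value convention, so they too lie in $S(P^0)$. One must check this carefully when $Q\in\mydirac_0\!\sQ$, where fitting an admissible rectangle with bottom face on the $C^{1,\alpha}$ boundary portion $\mydirac_0\!\sQ$ and verifying that the barrier construction of Proposition~\ref{prop:Friedman_lemma_2-2} remains available rely on the continuity hypotheses \eqref{eq:a_continuous_degenerate_boundary}, \eqref{eq:b_continuous_degenerate_boundary} and the strict sign \eqref{eq:b_perp_positive_boundary_parabolic}. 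Once this localization is in place the argument is a direct transcription of the proof of Theorem~\ref{thm:Friedman_theorem_2-1}, with the global maximum replaced throughout by the maximum over $S(P^0)$.
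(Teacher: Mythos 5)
Your proposal is correct and takes essentially the same route as the paper, whose entire proof is the remark that the proof of Theorem \ref{thm:Friedman_theorem_2-1} only used the maximality of $u(P^0)$ over $S(P^0)$; your transitivity and convexity bookkeeping ($S(Q)\subseteq S(P^0)$, forward rectangles and slices $C(Q)$ contained in $S(P^0)$) is exactly the justification the paper leaves implicit. One repair to your transcription: at a last point $Q\in\mydirac_0\!\sQ$ no coordinate rectangle with bottom face centered at $Q$ can lie in $\underline\sQ$ (since $n_0=0$ along $\mydirac_0\!\sQ$, that boundary portion is transverse to the constant-time slices, so a bottom face cannot sit on it, and the curved $C^{1,\alpha}$ boundary forces part of any such $\bar R$ outside $\bar\sQ$), so in that case the cited Theorem \ref{thm:Friedman_theorem_2-1} argument propagates instead through the time-slice Lemma \ref{lem:Friedman_lemma_2-3} (itself resting on Proposition \ref{prop:Friedman_lemma_2-2}) applied to $C(Q)$ --- which your observation $C(Q)\subseteq S(P^0)$ fortunately already licenses.
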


\begin{proof}
The proof is the same as that of Theorem \ref{thm:Friedman_theorem_2-1}, since we only made use of the fact that $u(P^0)$ is the maximum of $u$ on $S(P^0)\subset \underline\sQ$ (and not necessarily the maximum on $\underline\sQ$).
\end{proof}

We have the following analogue of \cite[Theorem 2.5]{FriedmanPDE}.

\begin{thm}[Refined strong maximum principle when $c$ has arbitrary sign]
\label{thm:Friedman_theorem_2-5}
Let $\sQ\subset\RR^{d+1}$ be an open subset and assume that the coefficients of $L$ obey \eqref{eq:a_locally_strictly_parabolic_interior_domain}, \eqref{eq:b_perp_positive_boundary_parabolic}, \eqref{eq:c_nonnegative_domain}, \eqref{eq:c_nonnegative_boundary}, \eqref{eq:bc_locally_bounded}, and \eqref{eq:abc_continuous_degenerate_boundary}. If $u \in C^2_s(\underline\sQ)$ obeys $Lu\leq 0$ on $\sQ$, and $u\leq 0$ on $S(P^0)$, and $u(P^0)=0$, then $u = 0$ on $S(P^0)$.
\end{thm}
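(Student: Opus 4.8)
The plan is to mimic the proof of Theorem~\ref{thm:Friedman_theorem_2-3}, making two changes: the slice $C(P^0)$ is replaced throughout by the influence set $S(P^0)$, and the appeal to the unrefined strong maximum principle Theorem~\ref{thm:Friedman_theorem_2-1} is replaced by its refinement Theorem~\ref{thm:Friedman_theorem_2-4}. The latter substitution is forced by the hypotheses: we are given $u\leq 0$ and the sign conditions on the coefficients only on $S(P^0)$ rather than on all of $\underline\sQ$, and Theorem~\ref{thm:Friedman_theorem_2-4} is precisely the version whose hypotheses are localized to $S(P^0)$. As in Theorem~\ref{thm:Friedman_theorem_2-3}, the arbitrary sign of $c$ is absorbed by the exponential substitution $v:=e^{-\sigma x_d}u$, which produces the operator $L_0$ with vanishing zeroth-order coefficient, together with the shift $v\mapsto v+1$, which turns the maximum value $0$ into the positive maximum value $1$ demanded by the refined principle.

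First I would record two elementary facts about $S(P^0)$. It is path-connected, since every point of $S(P^0)$ is joined to $P^0$ by an admissible (monotone-time) curve in $\underline\sQ$ and every terminal subarc of such a curve again lies in $S(P^0)$; and it is transitive, in the sense that $P^1\in S(P^0)$ implies $S(P^1)\subseteq S(P^0)$ (concatenate the two defining curves). Next I would set $Z:=\{P\in S(P^0):u(P)=0\}$. Since $u(P^0)=0$ we have $P^0\in Z$, and since $u\in C(\underline\sQ)$ and $u\leq 0$ on $S(P^0)$ the set $Z$ is relatively closed in $S(P^0)$. It therefore suffices to prove that $Z$ is relatively open in $S(P^0)$; connectedness then yields $Z=S(P^0)$, exactly as in the final step of the proof of Theorem~\ref{thm:Friedman_theorem_2-3}.

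To establish openness at a point $P^1\in Z$, I would split into two cases. If $P^1\in\sQ$, then $a$ is locally strictly parabolic there by \eqref{eq:a_locally_strictly_parabolic_interior_domain} and the classical interior refined strong maximum principle (the interior case of \cite[Theorem 2.5]{FriedmanPDE}) gives $u=0$ on the local influence set of $P^1$. If $P^1\in\mydirac_0\!\sQ$, I would first normalize by the rigid motion used in the proof of Theorem~\ref{thm:Friedman_theorem_2-3}, translating $P^1$ to the origin and rotating the spatial coordinates so that $\vec n(P^1)=e_d$, and then set $v:=e^{-\sigma x_d}u$. Using the continuity \eqref{eq:abc_continuous_degenerate_boundary} of $a,b,c$ at $P^1$ together with $a(P^1)=0$, one chooses $\sigma$ large and $\rho>0$ small so that the transformed zeroth-order coefficient $c-\sigma b^d-\sigma^2 a^{dd}$ is $\leq 0$ on $B_\rho(P^1)\cap\sQ$; since $u\leq 0$ there, this gives $L_0v\leq 0$ on $B_\rho(P^1)\cap\sQ$, where $L_0$ has vanishing zeroth-order coefficient and transformed drift with $\tilde b^d(P^1)=b^d(P^1)>0$ by \eqref{eq:b_perp_positive_boundary_parabolic} (so that \eqref{eq:b_perp_positive_boundary_parabolic} persists on $B_\rho(P^1)\cap\mydirac_0\!\sQ$ after shrinking $\rho$). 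Because $S(P^1)\subseteq S(P^0)$ and $u\leq 0$ on $S(P^0)$, the function $v+1$ attains the positive maximum $1$ at $P^1$ on the local influence set and obeys $L_0(v+1)=L_0v\leq 0$ with nonnegative zeroth-order coefficient, so Theorem~\ref{thm:Friedman_theorem_2-4} applies and yields $v+1\equiv 1$, that is $u\equiv 0$, on $S(P^1)\cap B_\rho(P^1)$. This is the device already used near the end of the proof of Theorem~\ref{thm:Friedman_theorem_2-3}, but with Theorem~\ref{thm:Friedman_theorem_2-4} in place of Theorem~\ref{thm:Friedman_theorem_2-1}.

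The step I expect to be the main obstacle is the patching, namely verifying that these local conclusions really do make $Z$ relatively open in $S(P^0)$. The rigid normalization and the coefficient bookkeeping controlling $c-\sigma b^d-\sigma^2 a^{dd}$ and preserving \eqref{eq:b_perp_positive_boundary_parabolic} are routine and identical to those already carried out in the proof of Theorem~\ref{thm:Friedman_theorem_2-3}. What requires care is the time-propagation: one must check that vanishing of $u$ on $S(P^1)\cap B_\rho(P^1)$ (and its interior analogue) translates into vanishing on a genuine relative neighborhood of $P^1$ in $S(P^0)$, so that the open--closed argument closes up. This propagation is exactly the Nirenberg--Friedman mechanism encapsulated in the rectangle Lemma~\ref{lem:Friedman_lemma_2-4} and its chaining into Theorem~\ref{thm:Friedman_theorem_2-4}; the genuinely new ingredient contributed here is only the handling of the degenerate-boundary points $P^1\in\mydirac_0\!\sQ$ via the exponential substitution, and once that local step is in place the global argument is inherited from the structure already established for Theorems~\ref{thm:Friedman_theorem_2-1}, \ref{thm:Friedman_theorem_2-3}, and~\ref{thm:Friedman_theorem_2-4}.
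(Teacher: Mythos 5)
Your local ingredients are the right ones, and they match what the paper actually does: its proof of Theorem \ref{thm:Friedman_theorem_2-5} is Friedman's proof of \cite[Theorem 2.5]{FriedmanPDE} verbatim, with Theorem \ref{thm:Friedman_theorem_2-3} (and its exponential-substitution method) and the chaining argument from the proof of Theorem \ref{thm:Friedman_theorem_2-1} replacing their classical counterparts. The genuine gap is in your global scaffold, the open--closed argument on $Z:=\{P\in S(P^0):u(P)=0\}$, and it is exactly the step you flag as ``the main obstacle'' and then dismiss as routine patching. It is not routine: relative openness of $Z$ at $P^1$ requires $u$ to vanish at nearby points of $S(P^0)$ lying on the $P^0$-side of $P^1$ in time, and no maximum-principle mechanism launched from $P^1$ can reach such points. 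Theorem \ref{thm:Friedman_theorem_2-4}, Lemma \ref{lem:Friedman_lemma_2-4}, and the classical refined strong maximum principle all propagate the maximum attained at $P^1$ only into $S(P^1)$, i.e.\ monotonically in time \emph{away} from $P^0$, whereas a relative neighborhood of $P^1$ in $S(P^0)$ contains points on the other side of the slice through $P^1$. The open--closed device is legitimately used in the paper's proof of Theorem \ref{thm:Friedman_theorem_2-3}, but only inside the time slice $C(P^0)$, where the propagation ($u=0$ on $B_\rho(P^1)\cap C(P^0)$) is purely spatial and hence gives a genuine relative neighborhood; transplanted to $S(P^0)$ it breaks precisely because of this time asymmetry. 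The correct global organization (Friedman's, hence the paper's) is different: fix an arbitrary $P^1\in S(P^0)$, join it to $P^0$ by a monotone-time curve in $\underline\sQ$, and propagate vanishing from $P^0$ \emph{backward} along the curve --- supposing $u(P^1)<0$, take the first point of the curve at which $u=0$ and apply the local substitution/rectangle argument there to force $u=0$ slightly earlier along the curve, a contradiction. In that scheme only backward propagation is ever needed, which is exactly what your local step (substitution at points of $\mydirac_0\!\sQ$, Theorem \ref{thm:Friedman_theorem_2-4} applied to $v+1$) supplies; your scheme, by contrast, stalls at the forward direction, and filling it in amounts to re-proving the whole theorem.

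A second, related defect: in your degenerate-boundary step you write ``since $u\leq 0$ there, this gives $L_0v\leq 0$ on $B_\rho(P^1)\cap\sQ$.'' But the hypothesis of Theorem \ref{thm:Friedman_theorem_2-5} gives $u\leq 0$ only on $S(P^0)$, not on $B_\rho(P^1)\cap\sQ$, and the inclusion $B_\rho(P^1)\cap\sQ\subseteq S(P^0)$ can fail (already in the cylinder of Example \ref{exmp:Connected_subsets_parabolic_cylinder}, any ball around $P^1$ contains points outside $S(P^0)$ when $t^1$ is near $t^0$, and worse for non-cylindrical $\sQ$). In the proof of Theorem \ref{thm:Friedman_theorem_2-3} this step is legitimate because there $u\leq 0$ on all of $\underline\sQ$; here the inequality $L_0v\leq 0$, which needs the sign of $\left(c-\sigma b^d-\sigma^2a^{dd}\right)v$, is only available on $B_\rho(P^1)\cap S(P^0)$-type sets. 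Friedman's curve-chaining handles this automatically by confining every application of the substitution and of the rectangle lemma to rectangles contained in $S(P^0)$ along the curve --- a further reason the curve scheme, rather than the open--closed scheme, is the right global argument. Your interior-point step (invoking the interior case of \cite[Theorem 2.5]{FriedmanPDE} under \eqref{eq:a_locally_strictly_parabolic_interior_domain}) and the transitivity and path-connectedness observations about $S(P^0)$ are fine.
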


\begin{proof}
The proof is identical to that of \cite[Theorem 2.5]{FriedmanPDE}, except that the roles of \cite[Theorem 2.3]{FriedmanPDE} and its method of proof and the proof of \cite[Theorem 2.1]{FriedmanPDE} are replaced by those of Theorems \ref{thm:Friedman_theorem_2-3} and \ref{thm:Friedman_theorem_2-1}.
\end{proof}

As in \cite[\S 2.2]{FriedmanPDE}, we can deduce a version\footnote{Compare Theorem \ref{thm:Weak_maximum_principle_C2s_bounded_domain}.} of the weak maximum principle from the strong maximum principle and obtain the following analogue of \cite[Theorem 2.6]{FriedmanPDE}.

\begin{thm}[Weak maximum principle]
\label{thm:Friedman_theorem_2-6}
Let $\sQ\subset\RR^{d+1}$ be a \emph{bounded} open subset and assume that the coefficients of $L$ obey \eqref{eq:a_locally_strictly_parabolic_interior_domain}, \eqref{eq:b_perp_positive_boundary_parabolic}, \eqref{eq:c_nonnegative_domain}, \eqref{eq:c_nonnegative_boundary}, \eqref{eq:bc_locally_bounded}, \eqref{eq:a_continuous_degenerate_boundary}, and \eqref{eq:b_continuous_degenerate_boundary}. If $u \in C^2_s(\underline\sQ)$ obeys $Lu\leq 0$ on $\sQ$ and $u^*$ attains a global positive maximum at a point in $\bar S(P^0)$, then $u^*$ attains that maximum value at a point in the complement of $S(P^0)\cup \mydirac_0\!S(P^0)$.
\end{thm}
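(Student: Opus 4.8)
The plan is to deduce this weak maximum principle from the strong maximum principle, Theorem~\ref{thm:Friedman_theorem_2-1}, by a \emph{minimal-time} argument patterned on the deduction of \cite[Theorem 2.6]{FriedmanPDE} from \cite[Theorem 2.1]{FriedmanPDE}, with the proviso that points of $\mydirac_0\!\sQ$ are treated exactly as interior points. Write $M$ for the value of the global positive maximum of $u^*$, attained by hypothesis at some point of $\bar S(P^0)$. Since $\sQ$ is bounded, $\bar S(P^0)$ is compact, and since $u^*$ is upper semicontinuous the level set $F := \{P\in\bar S(P^0): u^*(P)=M\}$ is a non-empty compact subset of $\bar S(P^0)$. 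The goal is to prove $F\cap\left(\bar S(P^0)\less(S(P^0)\cup\mydirac_0\!S(P^0))\right)\neq\emptyset$, and I would argue by contradiction, assuming $F\subseteq S(P^0)\cup\mydirac_0\!S(P^0)$.

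Next I would choose $P_*=(t_*,x_*)\in F$ with $t_*$ minimal, which exists by compactness of $F$. A preliminary observation is that $\mydirac_0\!S(P^0)\subseteq\mydirac_0\!\sQ\subseteq\underline\sQ$: the only portions of $\partial S(P^0)$ produced by the non-increasing-time condition of Definition~\ref{defn:Connected_subsets_parabolic} carry a normal with $n_0\neq 0$ and are therefore excluded from the degenerate set by \eqref{eq:Degeneracy_locus_parabolic}, while the remaining part of $\partial S(P^0)$ lies in $\partial\sQ$ and is degenerate only where $\sQ$ is. Consequently $P_*\in\underline\sQ$, so $P_*\in\sQ$ or $P_*\in\mydirac_0\!\sQ$; since $u\in C^2_s(\underline\sQ)\subseteq C(\underline\sQ)$ we have $u(P_*)=u^*(P_*)=M$, a global positive maximum of $u$ attained in $\underline\sQ$. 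Because the coefficients of $L$ obey \eqref{eq:a_locally_strictly_parabolic_interior_domain}, \eqref{eq:b_perp_positive_boundary_parabolic}, \eqref{eq:c_nonnegative_domain}, \eqref{eq:c_nonnegative_boundary}, \eqref{eq:bc_locally_bounded}, \eqref{eq:a_continuous_degenerate_boundary}, and \eqref{eq:b_continuous_degenerate_boundary}, I may apply the strong maximum principle, Theorem~\ref{thm:Friedman_theorem_2-1} (or its refinement Theorem~\ref{thm:Friedman_theorem_2-4}), with $P^0$ replaced by $P_*$, to conclude that $u\equiv M$ on $S(P_*)$.

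The contradiction would then come from the geometric fact that $S(P_*)$ reaches strictly below $t_*$. If $P_*\in\sQ$ then a full ball about $P_*$ lies in $\sQ$, so $\underline\sQ$ extends to times $<t_*$ near $P_*$; if $P_*\in\mydirac_0\!\sQ$ then $n_0(P_*)=0$ by \eqref{eq:Degeneracy_locus_parabolic}, so $e_0$ is tangent to the $C^{1,\alpha}$ hypersurface $\mydirac_0\!\sQ$ and one can travel backward in time from $P_*$ while remaining in $\mydirac_0\!\sQ\subseteq\underline\sQ$; compare Example~\ref{exmp:Connected_subsets_parabolic_cylinder}, where $S(P^0)=(0,t^0]\times\underline\sO$ always admits such backward motion. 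In either case there is a point $P'\in S(P_*)$ with $t(P')<t_*$, whence $u(P')=M$. If $P'\in\bar S(P^0)$, then $P'\in F$ with $t(P')<t_*$, contradicting the minimality of $t_*$ and proving the theorem.

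I expect the main obstacle to be the verification that $S(P_*)\subseteq\bar S(P^0)$ --- equivalently that the backward point $P'$ lies in $\bar S(P^0)$ --- in the case $P_*\in\mydirac_0\!S(P^0)\less S(P^0)$. When $P_*\in S(P^0)$ this is immediate, since concatenating non-increasing-time curves gives $S(P_*)\subseteq S(P^0)\subseteq\bar S(P^0)$; the difficulty is confined to degenerate boundary points of $S(P^0)$ that are limits of, but do not themselves belong to, $S(P^0)$. I would resolve this with a short lemma showing that non-increasing-time connectivity to $P^0$ is stable under small perturbations near $\mydirac_0\!\sQ$: approximating $P_*$ by points of $S(P^0)$ and using $n_0=0$ together with the $C^{1,\alpha}$ regularity of $\mydirac_0\!\sQ$, every backward-time point of $S(P_*)$ is a limit of backward-time points of $S(P^0)$ and hence lies in $\bar S(P^0)$. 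Once this containment is in hand, the minimal-time contradiction closes the argument.
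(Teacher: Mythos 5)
Your overall strategy---deducing the theorem from the refined strong maximum principle by an extremal-time argument over the level set $F$ of the maximum, with points of $\mydirac_0\!\sQ$ treated as interior---is exactly the route the paper takes (its proof is Friedman's deduction of his Theorem 2.6 from his Theorem 2.4, with Theorem \ref{thm:Friedman_theorem_2-4} in the latter's place). However, your time orientation is inverted, and the contradiction step fails as written. By Definition \ref{defn:Connected_subsets_parabolic}, $P\in S(P_*)$ means the time coordinate is non-increasing along a curve traversed \emph{from $P$ to $P_*$}, so every point of $S(P_*)$ has $t(P)\geq t_*$: the set $S(P_*)$ lies at \emph{later} times, and the point $P'\in S(P_*)$ with $t(P')<t_*$ that you need simply does not exist. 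Your assertion that ``$S(P_*)$ reaches strictly below $t_*$'' conflates backward motion within $\underline\sQ$ (indeed possible from points of $\sQ$, and from points of $\mydirac_0\!\sQ$ since $n_0=0$ there) with membership in $S(P_*)$; the strong maximum principle forces $u\equiv M$ only on $S(P_*)$, so minimality of $t_*$ is never contradicted. This orientation is dictated by the paper's terminal-value convention: in Lemma \ref{lem:Friedman_lemma_2-4} the rectangle extends \emph{forward} in time from $P^0$, and Theorems \ref{thm:Friedman_theorem_2-1} and \ref{thm:Friedman_theorem_2-4} would be false under the backward reading---e.g., take $b=0$, $c=0$, a smooth non-decreasing $u=u(t)$ with $u\equiv M>0$ for $t\geq t^0$ and $u<M$ before; then $Lu=-u_t\leq 0$ and $u(P^0)=M$ is the maximum on the backward set, yet $u\not\equiv M$ there. (You were likely misled by Example \ref{exmp:Connected_subsets_parabolic_cylinder}, where the interval $(0,t^0]$ appears to have the direction reversed; the cylinder computation consistent with Definition \ref{defn:Connected_subsets_parabolic} and Lemma \ref{lem:Friedman_lemma_2-4} is $S(P^0)=[t^0,T)\times\underline\sO$.)

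The repair is mechanical but reverses every directional statement in your argument: choose $P_*\in F$ with $t_*$ \emph{maximal} (possible since $\sQ$ is bounded, so $F$ is compact), apply Theorem \ref{thm:Friedman_theorem_2-4} at $P_*$ to conclude $u\equiv M$ on $S(P_*)$, and then move strictly \emph{forward} in time from $P_*$ within $\underline\sQ$---using the interior ball when $P_*\in\sQ$, and the local cylindrical structure of the $C^{1,\alpha}$ hypersurface $\mydirac_0\!\sQ$ (where $n_0=0$) when $P_*\in\mydirac_0\!\sQ$. Such a forward segment, traversed toward $P_*$, is time-non-increasing, so its points lie in $S(P_*)$ and carry the value $M$; when $P_*\in S(P^0)$ they lie in $S(P^0)$ by concatenation, producing a point of $F$ at time greater than $t_*$ and contradicting maximality. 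Your flagged secondary issue---that points of $S(P_*)$ lie in $\bar S(P^0)$ when $P_*\in\mydirac_0\!S(P^0)\less S(P^0)$---persists in the corrected orientation and still requires the stability lemma you only sketch, though that is the same technicality Friedman's argument (and the paper, by citation) glosses.
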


\begin{proof}
The proof is identical to that of \cite[Theorem 2.6]{FriedmanPDE}, except that the role of \cite[Theorem 2.4]{FriedmanPDE} is replaced by that of Theorem \ref{thm:Friedman_theorem_2-4}.
\end{proof}

We have the following analogue of the \cite[Remark, p. 40]{FriedmanPDE}.

\begin{rmk}[Maxima of arbitrary sign]
If in Theorems \ref{thm:Friedman_theorem_2-1}, \ref{thm:Friedman_theorem_2-4}, and \ref{thm:Friedman_theorem_2-6} we have $c=0$ on $\sQ$, then for any constant $k\in\RR$, we have $L(u+k) = Lu$. Consequently, all the assertions remain true if the maximum value of $u$ is not assumed to be positive.
\end{rmk}

\appendix

\section{Fichera weak maximum principle and the parabolic Heston operator}
\label{sec:Fichera_and_heston}
We can compare the weak maximum principles and uniqueness theorems provided by our Theorems \ref{thm:Weak_maximum_principle_C2s_bounded_domain} and \ref{thm:Weak_maximum_principle_C2s_unbounded_domain} with those of Fichera, Ole{\u\i}nik, and Radkevi{\v{c}} \cite{Radkevich_2009a} in the case of the parabolic Heston operator, $L$, in \S \ref{subsec:Heston} on $\sO_T = (0,T)\times\sO$, for an open subset $\sO\subseteqq\RR\times\RR_+$ and show that those of Fichera, Ole{\u\i}nik, and Radkevi{\v{c}} are strictly weaker when $0<\beta<1$, where we recall from \S \ref{subsec:Heston} that $\beta = 2\kappa\theta/\sigma^2$.

Following the exposition by Z. Wu, J. Yin, and C. Wang in \cite[pp. 357--358]{Wu_Yin_Wang_2006}, we shall regard $L$ as a degenerate-elliptic operator, denoting $t=x_0$, in order to apply the Fichera maximum principles and uniqueness results described in \cite[Chapter 1]{Radkevich_2009a}. In the framework of Fichera (see \cite[p. 308]{Radkevich_2009a}), we let\footnote{In the work of Fichera \cite{Fichera_1960, Oleinik_Radkevic, Radkevich_2009a, Radkevich_2009b}, the boundary of the open subset $\sO_T\subset\RR^{d+1}$ is usually denoted by $\Sigma$ and $\Sigma^0$ is the subset of points $(t,x)\in\Sigma$ where $a^{ij}(t,x)n_in_j=0$.} $\Sigma$ denote the subset of points $(t,x)\in\partial\sO_T$ where $a^{ij}(t,x)n_in_j=0$ (with $\vec n:\partial\sO_T\to\RR^{d+1}$ denoting the \emph{inward}-pointing unit normal vector field along $\partial\sO_T$, as in \cite[p. 308]{Radkevich_2009a}) and the \emph{Fichera function} \cite[Equations (1.1.2) and (1.1.3)]{Radkevich_2009a} (taking into account our sign convention in \eqref{eq:Generator_parabolic} for the coefficients $(a,b,c)$ of $L$) is
$$
\fb := \sum_{k=0}^d\left(b^k - a^{kj}_{x_j}\right)n_k = \sum_{k=1}^d\left(b^k - a^{kj}_{x_j}\right)n_k + n_0,
$$
noting that $b^0\equiv 1$ and $a^{ij}\equiv 0$ when $i=0$ or $j=0$. For the parabolic Heston operator, $L$, in \eqref{eq:Heston_generator}, we have
$$
\Sigma = (0,T)\times\partial_0\sO\ \cup \{0\}\times\sO \cup \{T\}\times\sO,
$$
since $a^{ij}(t,x_1,x_2)=0$ when $x_2=0$ and $\vec n = (1,0,0)$ or $(-1,0,0)$, respectively, when $t=0$ or $T$.

Following \cite[p. 308]{Radkevich_2009a}, we denote by $\Sigma_1\subset\Sigma$ the subset where $\fb > 0$, by $\Sigma_2\subset\Sigma$ the subset where $\fb < 0$, and by $\Sigma_0\subset\Sigma$ the subset where $\fb = 0$; the set $\partial\sO_T\less\Sigma$ is denoted by $\Sigma_3$. By \cite[Theorem 1.1.1]{Radkevich_2009a}, the characterization of the subsets $\Sigma, \Sigma_0, \Sigma_1, \Sigma_2, \Sigma_3$ of the boundary $\partial\sO_T$ remains invariant under smooth changes of the independent coordinates, $(x_0,x_1,\ldots,x_d)$.

For the Heston operator, $L$, when $(t,x_1,x_2) \in (0,T)\times\partial_0\sO$, we have
$$
\fb(t,x_1,x_2)
=
\begin{cases}
1, &\hbox{if } t=0 \hbox{ and } (x_1,x_2) \in \sO,
\\
-1, &\hbox{if } t=T \hbox{ and } (x_1,x_2) \in \sO,
\\
\sigma^2(\beta-1)/2, &\hbox{if } t\in (0,T) \hbox{ and } (x_1,0) \in \partial\sO.
\end{cases}
$$
Hence,
\begin{align*}
\Sigma_0 &= (0,T)\times\partial_0\sO, \quad\hbox{if } \beta = 1,
\\
\Sigma_1 &=
\begin{cases}
\{0\}\times\sO, &\hbox{if } 0<\beta\leq 1,
\\
\{0\}\times\sO \cup (0,T)\times\partial_0\sO, &\hbox{if } \beta > 1,
\end{cases}
\\
\Sigma_2 &=
\begin{cases}
\{T\}\times\sO \cup (0,T)\times\partial_0\sO, &\hbox{if } 0<\beta<1,
\\
\{T\}\times\sO, &\hbox{if } \beta\geq 1,
\end{cases}
\\
\Sigma_3 &= (0,T)\times\partial_1\sO, \quad\hbox{if } \beta>0.
\end{align*}
The \emph{first boundary value problem of Fichera} \cite[Equations (1.1.4) and (1.1.5)]{Radkevich_2009a} for the operator $L$ is to find a function $u\in C^2(\sO_T)\cap C(\bar\sO_T)$ such that
$$
Lu = f\hbox{ on } \sO_T, \quad u = g \hbox{ on } \Sigma_2\cup\Sigma_3,
$$
given a source function $f$ on $\sO_T$ and a boundary data function $g$ on $\Sigma_2\cup\Sigma_3$. But
$$
\Sigma_2\cup\Sigma_3
=
\begin{cases}
\{T\}\times\sO \cup (0,T)\times\partial\sO &\hbox{if }0<\beta<1,
\\
\{T\}\times\sO \cup (0,T)\times\partial_1\sO &\hbox{if }\beta \geq 1.
\end{cases}
$$
Thus, for the parabolic Heston operator and $g\in C(\bar\Sigma_2\cup\bar\Sigma_3)$, the first boundary value problem of Fichera becomes
$$
Lu = f\hbox{ on } \sO_T, \quad u = g \hbox{ on }
\begin{cases}
\mydirac\!\sO_T &\hbox{if }0<\beta<1,
\\
\mydirac_1\!\sO_T &\hbox{if }\beta \geq 1.
\end{cases}
$$
where (see Example \ref{exmp:Boundary_cylinder_parabolic})
\begin{align*}
\mydirac_0\!\sO_T &= \{T\}\times\bar\sO \cup (0,T)\times\partial_0\sO,
\\
\mydirac_1\!\sO_T &= \{T\}\times\bar\sO \cup (0,T)\times\partial_1\sO,
\\
\mydirac\!\sO_T &= \{T\}\times\bar\sO \cup (0,T)\times\partial\sO.
\end{align*}
Therefore, we see that the first boundary value problem of Fichera differs from the formulations in \cite{DaskalHamilton1998, Daskalopoulos_Rhee_2003, Feehan_Pop_mimickingdegen_pde} when $0<\beta<1$, where a Dirichlet boundary condition along $\mydirac_0\!\sO_T$ is replaced by the requirement that $u$ have the regularity property, $C^{2+\alpha}_s(\underline\sO_T)\cap C(\bar\sO_T)$, up to the boundary portion $\mydirac_0\!\sO_T$. This boundary regularity paradigm yields a uniqueness result which is more powerful than that suggested by the Fichera maximum principle \cite[Theorem 1.1.2]{Radkevich_2009a} for $u\in C^2(\sO_T)\cap C(\bar\sO_T)$.

We also observe that
$$
\Sigma_0\cup\Sigma_1
=
\begin{cases}
\{0\}\times\sO, &\hbox{if }0<\beta<1,
\\
\{0\}\times\sO \cup (0,T)\times\partial_0\sO, &\hbox{if }\beta \geq 1.
\end{cases}
$$
In the case of $C^2(\sO_T)$ functions on bounded open subsets $\sO_T$, we note that the Fichera maximum principle for $C^2(\sO_T)$ functions \cite[Theorem 1.1.2]{Radkevich_2009a} requires that $u\in C^2(\sO_T\cup \Sigma_0\cup\Sigma_1)\cap C(\bar\sO_T)$ and $Lu = f$ on $\sO_T\cup \Sigma_0\cup\Sigma_1$, which is \emph{stronger} than the hypothesis of our Theorem \ref{thm:Weak_maximum_principle_C2s_bounded_domain} when $\beta\geq 1$, and yields, for $r>0$,
$$
\|u\|_{C(\bar\sO_T)} \leq \frac{1}{r}\|f\|_{C(\bar\sO_T)}\vee \|g\|_{C(\bar\Sigma_2\cup\bar\Sigma_3)},
$$
where\footnote{There is a typographical error in the statement of \cite[Theorem 1.1.2]{Radkevich_2009a}, where $\Sigma_2'\cap\Sigma_3$ should be replaced by $\Sigma_2'\cup\Sigma_3$; compare \cite[Theorem 1.1.2]{Oleinik_Radkevic}}
$\Sigma_2\cup\Sigma_3$ are as given above for $\beta\geq 1$ and $0<\beta<1$.

We see that the uniqueness result, when $f=0$ on $\sO_T\cup \Sigma_0\cup\Sigma_1$, afforded by the Fichera maximum principle \cite[Theorem 1.1.2]{Radkevich_2009a} is \emph{weaker} than that of our Theorem \ref{thm:Weak_maximum_principle_C2s_bounded_domain} when $0<\beta < 1$, since we only require $g=0$ on $\mydirac_1\!\sO_T$, and \emph{not} $g=0$ on $\mydirac\!\sO_T$ to ensure that $u=0$ on $\sO_T$. Indeed, the prescription of a Dirichlet boundary condition along $\mydirac_0\!\sO_T$, when $0<\beta<1$, ensures that solutions to the first boundary value problem of Fichera are at most continuous up to $\mydirac_0\!\sO_T$ and not smooth as in \cite{DaskalHamilton1998, Daskalopoulos_Rhee_2003, Feehan_perturbationlocalmaxima, Feehan_Pop_mimickingdegen}.

%
%

\bibliography{mfpde}
\bibliographystyle{amsplain}

\end{document}